\AddToHook{env/lemma/begin} {\crefalias{thm}{lemma}}
\AddToHook{env/prop/begin}{\crefalias{thm}{prop}}
\AddToHook{env/cor/begin}{\crefalias{thm}{cor}} 

\documentclass[11pt]{amsart}
\usepackage{cite,array}  
\usepackage{tikz,amscd}        
\usetikzlibrary{calc}  
\usetikzlibrary{arrows.meta, positioning, decorations.markings}
\usepackage[all]{xy}     
\usepackage{amssymb, amsmath,amsthm,color,amsfonts, mathrsfs}

\usepackage{hyperref}
\usepackage[capitalize,nameinlink,noabbrev,nosort]{cleveref}
\usepackage{subfiles}  
\usepackage{relsize}   
\usepackage{mathtools}  
\usepackage{pgfplots}   
\pgfplotsset{compat=1.18}

\definecolor{MilnorInk}{RGB}{30,47,65}
\definecolor{MilnorTeal}{RGB}{24,105,112}
\definecolor{MilnorBlue}{RGB}{57,81,142}
\definecolor{MilnorPlum}{RGB}{126,59,96}
\definecolor{MilnorOchre}{RGB}{144,101,38}
\tikzset{milnor path/.style={line width=.65pt,line cap=round,line join=round}}

\newtheorem{thm}{Theorem}[section]
\newtheorem{prop}[thm]{Proposition}
\newtheorem{lemma}[thm]{Lemma}

\newtheorem{cor}[thm]{Corollary}
\newtheorem{defn}[thm]{Definition}

\newtheorem{remark}[thm]{Remark}

\newtheorem{conjecture}[thm]{Conjecture}
\newtheorem{cond}[thm]{Condition}

 \numberwithin{equation}{section}

\newcommand{\dbb}[1]{[\![#1]\!]}

\newcommand{\bbC}{\mathbb C}
\newcommand{\bbP}{\mathbb P}

\newcommand{\bbZ}{\mathbb Z}
\newcommand{\bbR}{\mathbb R}

\newcommand{\cO}{\mathcal{O}}

\newcommand{\End}{\operatorname{End}}

\begin{document}

\title{Gamma conjectures for Milnor hypersurfaces}

\author{Jianxun Hu}
\address{School of Mathematics, Sun Yat-sen University, Guangzhou 510275, P.R. China}
\email{stsjxhu@mail.sysu.edu.cn}

\author{Hua-Zhong Ke}
\address{School of Mathematics, Sun Yat-sen University, Guangzhou 510275, P.R. China}

\email{kehuazh@mail.sysu.edu.cn}

\thanks{ 
 }
 
\author{Changzheng Li}
 \address{School of Mathematics, Sun Yat-sen University, Guangzhou 510275, P.R. China}
\email{lichangzh@mail.sysu.edu.cn}

\author{Jiayu Song}
 \address{ Hetao Institute of Mathematics and Interdisciplinary Sciences, Shenzhen 518045, P.R. China; School of Science and Engineering, The Chinese University of Hong Kong (Shenzhen), Shenzhen 518172, P.R. China }
\email{songjiayu@himis-sz.cn}

\thanks{}

\begin{abstract}
 We prove Gamma conjectures I and II of Galkin, Golyshev and Iritani for Milnor hypersurfaces, the smooth hypersurfaces of bidegree $(1,1)$ in the product of two projective spaces. 
 We  provide a  criterion for  Gamma conjecture II for general Fano manifolds, generalizing the criterion by Hu and Ke.
Using the mirror symmetry technique of Galkin and Iritani, we establish global Gamma-I for Milnor hypersurfaces, thereby proving  Gamma conjecture I as a special case. 
Using it, together with the Galois action and   our criterion, we verify Gamma conjecture II for Milnor hypersurfaces.   
 \end{abstract}

\maketitle

\tableofcontents

\section{Introduction}

The present article may be viewed as a companion of \cite{HKLS2}, in the sense that we approach Gamma conjecture II via the so-called ``global Gamma-I" as well. However, here we adopt a mirror-symmetric point of view instead,  and our proofs do not   rely on any results from \cite{HKLS2}. To illustrate the story, we also start with Dubrovin's conjecture proposed in his 1998 ICM address. This conjecture describes a deep relationship
between quantum cohomology and the derived category of coherent
sheaves \cite[Conjecture 4.2.2]{Dub}. For a Fano manifold $X$, the qualitative part of the
conjecture predicts that its big quantum cohomology is
semisimple if and only if its bounded derived category of coherent sheaves admits a full exceptional collection.
The quantitative part describes the Stokes matrix and the central connection matrix of the Dubrovin connection in terms of Euler pairings and
characteristic classes of exceptional objects. Gamma conjecture II, proposed by
Galkin, Golyshev, and Iritani \cite{GGI}, can be viewed as a refined version of the quantitative part. It relates the asymptotic behavior of the Dubrovin
connection, defined by genus-zero Gromov--Witten invariants,
to exceptional collections in the bounded derived category
of coherent sheaves. An independent refinement of Dubrovin's conjecture was proposed in \cite{CDG}.

Gamma conjecture II is expressed through Iritani's
$\widehat{\Gamma}$-integral structure \cite{Iri}, which defines
an integral lattice in the space of flat sections of the quantum
connection. Its $K$-group framing, denoted by $\mathcal Z^K$,
associates a flat section to a $K$-theory class. Assuming convergence and semisimplicity of big quantum cohomology,
and the existence of a full exceptional collection, Gamma
conjecture II asserts that, at a semisimple point and for a chosen
admissible phase, the flat sections associated with a suitable
full exceptional collection form an
\emph{asymptotically exponential fundamental solution} (AEFS)
of the Dubrovin connection \cite{GGI}.
As explained in Iritani's 2022 ICM report
\cite[Section~2]{Iri23icm}, it can be understood
as a compatibility statement between the Betti structure and
the Stokes structure discussed in \cite{HeSe,KKP}. The report also discussed Galkin's Gamma Conjecture III and the conjecture of Sanda and Shamoto \cite{SS20} as  generalizations of Dubrovin's conjecture to the non-semisimple setting.

Gamma conjecture II has so far been established for a limited
number of classes of Fano manifolds, with proofs drawing on
different geometric structures and analytic techniques.
Galkin, Golyshev, and Iritani \cite{GGI} proved the conjecture
for projective spaces and deduced it for Grassmannians using
the quantum Satake principle and isomonodromic deformation (see also \cite{CDG}).
For toric Fano manifolds, Fang and Zhou \cite{FaZh} combined
oscillatory integrals over Lefschetz thimbles with homological
mirror symmetry \cite{Kont} to identify the corresponding exceptional
objects.
Hu and Ke \cite{HuKe} proved the conjecture for smooth quadrics
by explicit asymptotic analysis of the flat sections associated
with Kapranov's exceptional collections, together with a
criterion based on isomonodromic deformation.
More recently, Hu, Ke, Li, and Su \cite{HKLS2} established
Gamma conjecture II for all del Pezzo surfaces through global
Gamma-I, combining Iritani's Galois action with operations on
exceptional collections and an analysis of the quantum spectrum.

In this paper, we study Gamma conjecture II for Milnor
hypersurfaces $H_{n,m}$, that is, smooth Fano hypersurfaces of bidegree $(1,1)$ in $\mathbb P^n\times\mathbb P^m$ with  $n\geq m\geq 1$.
When $m=1$, the Milnor hypersurface $H_{n,1}$ is a toric Fano manifold, so Gamma conjecture II follows from \cite{FaZh}. We therefore assume $m\geq2$ throughout this paper, except in Remark \ref{rmk:schu}.
Their two geometric descriptions make them a natural family
for studying the conjecture. Each $H_{n, m}$, as a hypersurface in $\mathbb P^n\times\mathbb P^m$,   admits a Landau-Ginzburg model from Givental's theory of toric complete intersections
\cite{Giv98}. Viewed as a Schubert subvariety of the two-step flag variety $F\ell_{1,n;n+1}$, it admits a Landau-Ginzburg model from toric degeneration similar to the construction in \cite{Giv97,BCFKS,HKLS}. The two mirrors are expected to be equivalent. At the same time, the projection to
$\mathbb P^m$ realizes $H_{n,m}$ as a $\mathbb P^{n-1}$-bundle,
and so Orlov's projective-bundle formula supplies a full
exceptional collection of line bundles \cite{Orl}.

Our main result is the following, proved in \textbf{\cref{thm:GammaIImainbody}}.

\begin{thm}\label{thm:introduction-gamma-II}
For every pair of integers $n\geq m\geq2$, Gamma conjecture II
holds for the Milnor hypersurface $H_{n,m}$.
\end{thm}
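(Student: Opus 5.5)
The plan follows the strategy announced in the abstract: first establish \emph{global Gamma-I}, then transport it around the spectrum of quantum multiplication using Iritani's Galois action, and finally feed the resulting collection of asymptotic classes into a criterion for Gamma-II. Concretely, I would proceed as follows.

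\textbf{Step 1: quantum spectrum.} Using the Givental mirror of $H_{n,m}\subset\mathbb P^n\times\mathbb P^m$, or equivalently the presentation of small quantum cohomology, I compute the eigenvalues of $c_1(H_{n,m})\star_0$. The expectation is that they are
\[
T\,\zeta,\qquad \zeta^{n}=\zeta^{m}=1\ \text{twisted by}\ (1+\ldots),
\]
with $T$ the spectral radius (property $\mathcal O$). I also need small quantum cohomology to be semisimple, or semisimple after a small deformation in $H^2$, so that the Dubrovin connection has distinct eigenvalues. Since $\dim H^*(H_{n,m})=nm+\dots$ matches the number $n(m+1)$ of line bundles in Orlov's collection, the critical points of the Laurent potential $f=\sum x_i+\sum y_j+q_1/\prod x_i\cdots$ must be shown to be all nondegenerate with distinct critical values.

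\textbf{Step 2: global Gamma-I.} For each critical point $p$ with critical value $u_p$, I use the Galkin–Iritani mirror technique: the oscillatory integral over the Lefschetz thimble at $p$ is identified with $\mathcal Z^K(E_p)$ for an explicit class $E_p$, computed via the quantum Lefschetz/hypergeometric $I$-function and the $\widehat\Gamma$-integral structure. This gives, for each eigenvalue, a $K$-class whose flat section has the prescribed exponential asymptotics $e^{-u_p/z}$. The principal class $E_{\mathrm{top}}=\mathcal O$ gives Gamma conjecture I. Galois (monodromy in $q$, i.e.\ tensoring by line bundles $\mathcal O(a,b)$) acts on these classes, and I would show that the classes obtained are of the form $\mathcal O(a,b)$ up to sign.

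\textbf{Step 3: criterion and collection.} I will state and prove a criterion generalizing Hu–Ke: if for an admissible phase one has asymptotic classes $E_1,\dots,E_N$, one per eigenvalue, forming a full exceptional collection up to sign and ordered compatibly with $\operatorname{Im}(e^{-i\phi}u_p)$, then Gamma-II holds. The required semiorthogonality is equivalent, via the Stokes/Euler pairing comparison, to upper-triangularity of $\chi(E_i,E_j)$ in that order. I then match the Galois-transported classes with a mutation of Orlov's collection $\{\mathcal O(a,b)\}$ on the $\mathbb P^{n-1}$-bundle, adjusting by mutations if the admissible-phase ordering demands it.

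\textbf{Main obstacle.} The hardest part is expected to be Step 2 for the non-principal critical points. Their thimbles are not directly handled by the Galkin–Iritani argument, and the $q$-monodromy orbit may fail to reach every eigenvalue, especially if several eigenvalues share a modulus. In that case I would combine the Galois action with mutations, via the compatibility of braid-group actions with the asymptotic classes, to cover the remaining eigenvalues. I would then check triangularity of the Euler form by an explicit computation with $\chi(\mathcal O(a,b))$ on $H_{n,m}$.
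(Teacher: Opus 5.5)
Your proposal has a genuine gap in two places.

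\textbf{Step 2.} As you formulate it, Step 2 is not available. The Galkin--Iritani technique rests on Iritani's integral representation over the positive real locus $Z_{\mathrm{real},\eta}$, and that representation identifies exactly one flat section, $\mathcal Z^K(\mathcal O)$. Matching thimbles at the other critical points with explicit $K$-classes is a homological-mirror-symmetry statement you have no tool for, as you concede. ``Galois plus mutations'' is not an argument.

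The missing idea is \emph{where} to apply the Galois action. In the paper, ``global Gamma-I'' means Gamma-I for $\mathcal O$ at every $\eta\in H^2(H_{n,m};\mathbb R)$. It is proved by verifying Galkin--Iritani's condition for all real $q$: a unique nondegenerate conifold point (via AM--GM and the real root $r_0$ of $F$), plus the (SR) estimate. This is needed because the base point is taken near the boundary $q_2\to0$ with $|q_1|$ in a fixed annulus, not at $\tau=0$.

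Near that boundary, uniform estimates give $e_l\approx nq_1^{1/n}\mu_l^{m+1}+(m+1)\mu_l q_1^{1/N}q_2^{1/(m+1)}$. So the spectrum consists of $n$ clusters of $m+1$ distinct eigenvalues. These stay distinct along the loops $q_1\mapsto e^{2\pi\mathbf{i}at}q_1$, $q_2\mapsto e^{2\pi\mathbf{i}bt}q_2$, and the loops carry $e_0$ to $e_{bn+a}$. Hence the Galois images $\mathcal O(-a,-b)$, $0\le a\le n-1$, $0\le b\le m$, of $\mathcal O$ reach every eigenvalue exactly once. They form Orlov's collection, with no mutations needed.

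Your base point at $\tau=0$ would not work. For $n=m$, the locus $q_1=(-1)^{n+1}q_2$ carries an extra $n$-fold eigenvalue $0$ not accounted for by the critical values of $f$. This locus contains $\tau=0$ when $n$ is odd. When $n$ is even, the $q_1$-loop on the unit circle crosses it.

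\textbf{Step 3.} Your criterion does not address the real difficulty, which is obtaining the asymptotics for one common admissible phase.

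If ``asymptotic classes for the phase $\phi$'' means asymptotics on a sector of opening $>\pi$ about $\phi$, the criterion is tautological by uniqueness of the AEFS.

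If it means what Galois transport actually gives, the criterion is insufficient. Transport gives Laplace integrals starting at the right eigenvalues, but along curved contours (the ray deformed during the loops to avoid the moving eigenvalues). Such sections need not be AEFS members, because straightening the contours at fixed $\tau$ would cross other eigenvalues and change the section. Upper-triangularity of $\chi(E_i,E_j)$ holds automatically for any exceptional collection, so it cannot force agreement.

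The paper's criterion instead assumes pairwise disjoint admissible contours with parallel tails, ordered compatibly with the collection. It proves Gamma II by Dubrovin's isomonodromic continuation to the universal cover of the configuration space. The $u_i$ are moved along their own contours until these become disjoint half-lines, where the Hu--Ke criterion applies. You need this mechanism, together with an explicit construction of such contours $L_{a,b}$ with horizontal tails in Orlov's order.
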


More precisely, we identify the flat sections associated with
a full exceptional collection from
Orlov's formula with an AEFS. Although both the mirror and the exceptional collection are
explicit, their existence does not by itself establish this
identification. The distinction is particularly clear from
the projective-bundle point of view. Iritani and Koto
\cite{IrKo} prove a formal decomposition of the quantum
$D$-module of a projective bundle into copies of the quantum
$D$-module of its base. As pointed out in \cite[Remark 1.11]{IrKo},
passing from this formal decomposition to an analytic
decomposition, and comparing the latter with Orlov's
semiorthogonal decomposition through the
$\widehat{\Gamma}$-integral structure, requires further
investigation. Gamma conjecture II
for the base and the fiber does not immediately yield the
conjecture for the total space from the formal decomposition alone.

Gamma conjecture I \cite{GGI} predicts, under the so-called Property
$\mathcal{O}$, that $\mathcal Z^K(\mathcal O_X)$ at $\tau=0\in H^2(X)$
has the smallest asymptotics along the positive
real axis.
It has been proved for several classes \cite{GGI, GZ16, GaIr, SS20, Ke24,HKLY, Yan22, Chow}.
Counterexamples to the original conjecture were found
in \cite{GHIKLS}, where modifications and an extension
over the K\"ahler moduli space were also investigated.
This inspires the authors of \cite{HKLS2} to study property Gamma-I, which is formulated at $\tau\in H^2(X)$ where the quantum spectrum has a simple rightmost eigenvalue.
The authors prove that, if this property holds at one
such point, it holds throughout the connected component
of the $(\mathrm{SR})$-region containing that point.
This parameter-dependent formulation provides the
starting point for our argument.
We obtain the following in  \textbf{\cref{Gamma-I}}.
\begin{thm}\label{mainthm:GammaI}
  Property Gamma-I
holds for $H_{n, m}$ at every $\tau\in H^2(H_{n,m};\mathbb R)$. Consequently, the original Gamma conjecture I holds for $H_{n, m}$. 
\end{thm}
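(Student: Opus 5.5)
The plan follows the strategy announced in the introduction: establish Property Gamma-I at one point of each connected component of the $(\mathrm{SR})$-region in $H^2(H_{n,m};\mathbb R)\cong\mathbb R^2$, propagate it using the connectedness result of \cite{HKLS2}, and then show that the $(\mathrm{SR})$-region is all of $H^2(H_{n,m};\mathbb R)$.

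\textbf{Step 1: the rightmost eigenvalue.} Write $\tau=t_1h_1+t_2h_2$, where $h_1,h_2$ are the hyperplane classes pulled back from $\mathbb P^n$ and $\mathbb P^m$. We use the Givental Landau--Ginzburg mirror of $H_{n,m}\subset\mathbb P^n\times\mathbb P^m$, namely a Laurent polynomial
\[
f_\tau(x,y)=\sum_{i=1}^{n-1}x_i+\sum_{j=1}^{m-1}y_j+\frac{e^{t_1}+e^{t_2}\cdots}{x_1\cdots x_{n-1}\,y_1\cdots y_{m-1}}\cdots,
\]
or equivalently the mirror coming from the toric degeneration of $F\ell_{1,n;n+1}$. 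Its critical values give the eigenvalues of $c_1\star_\tau$. For real $\tau$ all coefficients of $f_\tau$ are positive. Restricting $f_\tau$ to the positive real torus $(\mathbb R_{>0})^{n+m-2}$ gives a strictly convex function in logarithmic coordinates, so it has a unique critical point $p_\tau$ there, with real critical value $T_\tau$. For any other critical point $q$, the triangle inequality gives $|f_\tau(q)|\le f_\tau(|q|)$, and $f_\tau(|q|)\ge T_\tau$ because $T_\tau$ is the minimum of $f_\tau$ on the positive torus. Comparing $\mathrm{Re}$ and using equality cases, we aim to show that $T_\tau$ is the unique eigenvalue of maximal modulus and that it is simple, for every real $\tau$. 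This is Galkin's Property $\mathcal O$ argument done uniformly in $\tau$. It would show that the whole real plane lies in the $(\mathrm{SR})$-region, which is connected.

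\textbf{Step 2: matching the thimble.} Following Galkin--Iritani \cite{GaIr}, the Lefschetz thimble of $f_\tau$ through $p_\tau$, which is the positive real locus, has oscillatory integral $\int e^{-f_\tau/z}\,\Omega$. This integral is the component of the $J$-function paired with a specific flat section, and that flat section has the smallest exponential asymptotics $e^{-T_\tau/z}$ as $z\to+0$. Through the mirror theorem (Givental's $I$-function for the $(1,1)$ hypersurface together with the mirror map, which is trivial here when $n\ge m\ge2$ makes the index large enough, or a controlled shift otherwise) and Iritani's $\widehat\Gamma$-integral structure, this section should be identified with $\mathcal Z^K(\mathcal O_{H_{n,m}})$. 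Concretely, the real positive integral equals the $\widehat\Gamma$-integrated $I$-function $\int\widehat\Gamma\cdot I$, which is the standard Gamma-I computation. It suffices to do this at a single convenient point, say $\tau=0$, and then invoke the propagation theorem of \cite{HKLS2}.

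\textbf{Main obstacle.} The hardest part is Step 1's uniqueness claim for all real $\tau$. We must rule out critical points $q$ with $|f_\tau(q)|=T_\tau$ that are not on the positive real locus, and show that the multiplicity is one. For the multiplicity, the plan is to use the nondegeneracy of the Hessian from strict convexity. For ruling out other critical points, we would analyse the equality case of the triangle inequality: equality forces all monomials of $f_\tau$ to share a common argument, and because the exponent lattice of the mirror spans the full lattice, $q$ must then be a rotation of $p_\tau$ by a root of unity. Such rotations have critical values $\zeta T_\tau$ with $\zeta\ne1$, so their real parts are strictly smaller than $T_\tau$. With Gamma-I proved at every real $\tau$, specializing to $\tau=0$ gives the original Gamma conjecture I.
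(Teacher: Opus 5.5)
Your Step~1 has a genuine gap, exactly at the point you flagged. You combine $|f_\tau(q)|\le f_\tau(|q|)$ with $f_\tau(|q|)\ge T_\tau$, but these point in opposite directions. They give no comparison between $|f_\tau(q)|$ (or $\mathrm{Re}\,f_\tau(q)$) and $T_\tau$, so your equality-case analysis rests on an inequality you never obtain. Positivity of coefficients plus log-convexity cannot supply it either. For $g=x+x^{-1}+\epsilon(x^4+x^{-4})$ the conifold value is $T_{\rm con}=g(1)=2+2\epsilon$. The critical points with $u=x+x^{-1}$ satisfying $u^3-2u=-1/(4\epsilon)$ have critical values $\tfrac{3}{4}u-2\epsilon u^2+2\epsilon$. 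The two non-real roots $u\approx(4\epsilon)^{-1/3}e^{\pm\mathbf i\pi/3}$ give real parts of order $\epsilon^{-1/3}$, far to the right of $T_{\rm con}$. So the fact that $T_\tau$ is the simple rightmost eigenvalue (condition (SR), and parts (d), (e) of Condition~\ref{condition}) has to come from the specific form of the mirror of $H_{n,m}$.

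That is what the paper does. Givental's mirror is not a Laurent polynomial on a torus. It is $f=\sum x_i+\sum y_j$ on $Z_\eta=\{q_1/(x_1\cdots x_n)+q_2/(y_1\cdots y_m)=1\}$, whose positive part is $\{y_1\cdots y_m>q_2\}$ after eliminating $x_n$.

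The critical equations force $x_1=\dots=x_n$ and $y_1=\dots=y_m$. So critical points correspond to roots of the one-variable polynomial $F(y)=q_2y^{n-m}(y^m/q_2-1)^{n+1}-q_1$, with critical value $E(y)=ny^{m+1}/q_2+(m-n)y$.

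Lemma~\ref{root_re} shows the root $r_0>q_2^{1/m}$ is simple and of maximal modulus. Equality holds only for $r_0e^{\mathbf i\theta}$ with $\theta\in 2\pi\mathbb Z/\gcd(n,m)$, giving $E=E(r_0)e^{\mathbf i\theta}$ and hence Property $\mathcal O$. For $|z|<r_0$, the explicit formula $E(z)=z\bigl(n(q_1/q_2)^{1/(n+1)}z^{-(n-m)/(n+1)}w+m\bigr)$ with $|w|=1$ gives $|E(z)|\le n(q_1/q_2)^{1/(n+1)}|z|^{(m+1)/(n+1)}+m|z|<E(r_0)$. This is a monotonicity argument in $|z|$, not a comparison with the minimum.

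You also need Corollary~\ref{cor:eigenCV}. When $n=m$ and $q_1=(-1)^{n+1}q_2$ (which happens on the real plane for $n$ odd, $q_1=q_2$), the Jacobi ring misses $n$ zero eigenvalues of $c_1\star$. They are harmless since $0<E(r_0)$, but ``critical values give the eigenvalues'' is not literally true.

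Once the spectral statement is in place, the rest of your plan is sound:
\begin{itemize}
\item Your log-convexity idea, applied on $Z_{{\rm real},\eta}$, gives uniqueness and nondegeneracy of the conifold point more cleanly than the paper's AM--GM and Sherman--Morrison computation. There, with $c=x_1\cdots x_{n-1}$ and $d=y_1\cdots y_m$, the term $q_1d/(c(d-q_2))$ is a positive sum of exponentials in log coordinates.
\item Step~2 is the paper's route: Iritani's integral representation, differentiated using that $H^*$ is generated by $H^2$. The paper applies it at every real $\eta$ directly; you propagate from $\tau=0$ via \cite{HKLS2}, which also works.
\end{itemize}
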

  \noindent The proof uses Iritani's integral
representation for $\mathcal Z^K(\mathcal O)$ \cite{Iri11}, following the mirror-symmetric technique
of Galkin and Iritani \cite{GaIr}. The remaining problem is to pass from this single flat section
to the entire AEFS. Iritani's Galois action identifies
monodromy in the K\"ahler moduli with tensoring by line
bundles under the $K$-group framing \cite{Iri}. It therefore
provides the flat sections associated with the other objects
in Orlov's collection. However, knowing these sections separately
is not enough: they must have the required asymptotic behavior for a common admissible phase. 

To address this issue, we establish a criterion for Gamma
conjecture II  in  \textbf{\cref{criterion}}, generalizing \cite[Theorem 3.8]{HuKe} by Hu and Ke. Roughly speaking, the criterion
applies when the sections associated with a full exceptional
collection admit integral representations along pairwise
disjoint curved paths whose tails are parallel and ordered
compatibly with the collection. The proof uses Dubrovin's isomonodromic deformation theory for semisimple Frobenius manifolds \cite{Dub99}, which allows the starting points and the integration paths
to move simultaneously until the paths become parallel
half-lines and the corresponding sections form an AEFS. (See   \cref{figure_move} for an illustration.) Similar arguments were used by Iritani in \cite[Proof of Theorem 7.33]{Iri20} to establish Orlov-type decompositions of quantum $D$-modules in the toric setting. Our criterion is stated
for general Fano manifolds and is independent of the
particular geometry of Milnor hypersurfaces.

For the application to $H_{n,m}$, we study the quantum spectrum
near the boundary divisor $\{q_2=0\}$ of the K\"ahler moduli space.
Here $q_1,q_2$ are the small quantum parameters corresponding
to the hyperplane classes of the two ambient projective
spaces. Near the boundary divisor, the quantum spectrum forms  $n$ separated clusters, each containing
$m+1$ distinct eigenvalues (see   \cref{figure_eigen}). This distribution reflects
the projective-bundle structure of $H_{n,m}$.

The relation between the quantum spectrum and geometry
has been studied in several related settings.
Kontsevich conjectured that the quantum spectrum of a blow-up
decomposes into parts associated with the original variety
and the centre of the blow-up; see
\cite[Conjecture~1]{GS25}.
Gyenge and Szab\'o \cite{GS25} proved the surface case:
in a suitable limit of the quantum parameters, the spectrum
is asymptotic to that of a minimal model together with
additional eigenvalues tending to infinity, one for each
exceptional divisor.
At the level of quantum $D$-modules, Iritani
\cite{Iri23} and Iritani--Koto \cite{IrKo} established
formal decomposition theorems for blow-ups and projective
bundles, respectively. These theorems induce  formal decompositions of the
quantum spectrum and describe limiting configurations that reflect the corresponding geometric structures.

Applications to Gamma conjecture II require further
explicit control of the individual eigenvalues.
For del Pezzo surfaces, the argument in \cite{HKLS2}
requires finer eigenvalue estimates than those supplied
by the general asymptotic results of \cite{GS25}.
Similarly, in the present work, we need to determine
the separation of eigenvalues within each cluster.
Uniform asymptotic estimates provide the required separation and
allow us to construct the integration paths needed
in  \cref{criterion}.
Combining this analysis with Gamma-I and the Galois
action gives the desired AEFS.
These calculations further illustrate the viewpoint
developed in \cite[Sections~6 and~7]{GHIKLS}: the behavior
of the quantum spectrum near suitable boundary loci
reflects the geometry of extremal contractions.
Thus, the geometry  guides the choice of boundary regions,
while the applications to the Gamma conjectures call
for a more detailed analysis of the quantum spectrum
in those regions.

The paper is organized as follows. Section~2 recalls the
Dubrovin connection, the $\widehat{\Gamma}$-integral structure,
and Gamma conjecture II. Section~3 proves the criterion
based on Laplace integral representations. Section~4
describes the small quantum cohomology and the mirror of
Milnor hypersurfaces $H_{n, m}$. Section~5 proves property Gamma-I,
and Section~6 presents the proof of the Gamma conjecture II by using the Galois
action and the asymptotic distribution of the quantum
spectrum. Appendix~A gives the proof of the quantum ring
presentation, and Appendix~B establishes the uniform
asymptotic estimates used in Section~6.

\subsection*{Acknowledgements} 
The authors would like to thank Sergey Galkin, Hiroshi Iritani and Zhitong Su    for the relevant collaborations on Gamma conjectures that inspired the present  work.   
The first three authors are  supported in part by the National Key R \&  D Program of China No.~2023YFA1009801.
  J. Hu is also supported in part by NSFC Grant 12531003.
 H.-Z.  Ke is also supported in part by NSFC Grant 12271532.

\subsection*{AI Disclosure} After the initial draft had been completed, ChatGPT was used  to assist with proofreading,  minor mathematical checks and figure preparation.

\section{Preliminaries}
In this paper, we only consider the even part of  cohomology  for a Fano manifold $X$, and we set $H^{*}(X)=H^{{\rm ev}}(X;\mathbb{C}).$

 \subsection{Quantum cohomology} \label{quanh}
Let  $\overline{\mathcal{M}}_{0,k}(X,\beta)$ denote the   moduli  stack of  genus-zero, $k$-pointed  stable maps to $X$ of effective degree   $\beta\in {\rm Eff}(X)$. Here ${\rm Eff}(X)\subset H_2(X,\mathbb Z)$ is the semigroup generated by effective curve classes. For $1\leq i \leq k$, let ${\rm ev}_{i}:\overline{\mathcal{M}}_{0,k}(X,\beta)\longrightarrow X $  denote  the $i$-th evaluation map, and  $\mathcal{L}_i$ denote the universal cotangent line bundle on $\overline{\mathcal{M}}_{0,k}(X,\beta)$ 
with first Chern class $\psi_i$. 
 Given nonnegative integers $d_i$ and classes $\gamma_i\in H^{*}(X)$,  we have the genus-zero descendant Gromov-Witten invariant (see e.g. \cite{CoKa} for more details)
 $$\langle  \gamma_1 \psi_1^{d_1}, \gamma_2\psi_2^{d_2}, \cdots,  \gamma_k\psi_k^{d_k}  \rangle_{0,k, \beta}=\int_{[\overline{\mathcal{M}}_{0,k}(X,\beta)]^{\rm vir}} \prod_{i=1}^{k}(  {\rm ev}_{i}^{*}(\gamma_{i}) \cup \psi_i^{d_i}). $$
Here  $[\overline{\mathcal{M}}_{0,k}(X,\beta)]^{\rm vir}$ is the virtual fundamental class, which is of degree $\dim X+k-3+\int_{\beta}c_1(T_X)$ in  the Chow group $A_*(\overline{\mathcal{M}}_{0, k}(X, \beta))$.
We simply denote  $\gamma  \psi_i^{0}$ as $\gamma$ whenever there is no confusion.

 Choose a nef integral basis ${\xi_1,\cdots,\xi_r}$  of $H^{2}(X;\mathbb{Z})$/torsion  
and extend $\{\xi_0=\mathbf{1}, \xi_1, \xi_2,\cdots, \xi_r\}$ to a   homogeneous basis $\{\xi_i\}_{i=0}^{N-1}$  of $H^{*}(X)$. 
Let $\{\xi^j\}_j$ be the dual basis of $\{\xi_i\}_i$  with respect to the Poincar$\acute{\mbox{e}}$ pairing, namely   $(\xi_i,\xi^j)=\int_{X} \xi_{i} \cup \xi^{j} =\delta_{i,j}.$ 
Write $\tau=\sum_{i=0}^{N-1}t_{i}\xi_i \in H^{*}(X)$. The big quantum cohomology  $QH_{\rm big}(X)=(H^{*}(X)\otimes \bbC\dbb{ \tau}, *_{\tau})$  is a $\bbC\dbb{\mathbf{\tau}}$-algebra   with the big quantum product defined by  
$$\xi_{i} \ast_{\tau} \xi_{j}=\xi_i\cup\xi_j+ \sum_{l=0}^{N-1} \partial_{t_i} \partial_{t_j} \partial_{t_l} \Phi(\tau) \xi^{l}.$$ Here   
 the  potential function $\Phi(\tau)$ (with the specializations of  all   Novikov variables   at $1$ for simplicity) is given by   $$\Phi(\tau)= \sum_{\beta \in {\rm Eff}(X)\setminus\{0\}}\sum_{k=0}^{\infty}\frac{1}{k!} \langle \tau,  \cdots,  \tau \rangle_{0,k, \beta}\in\mathbb{C}\dbb{t_ 0,q_1,...,q_{r},t_{r+1},...,t_{N-1}}, $$
 where $q_i:=e^{t_i}$ for $1\leq i\leq r=\dim_\bbC H^2(X)$.

 \begin{defn}
   The big quantum cohomology  $QH_{\rm big}(X)$  {converges near the large radius limit} if there exists $\delta>0$ such that for $$|t_0|, |q_1|,\cdots,|q_r|, |t_{r+1}|,...,|t_{N-1}|<\delta,$$  $\Phi(\tau)$ absolutely converges as a series in $t_0,q_1,\cdots,q_{r},t_{r+1},\cdots,t_{N-1}$.
 \end{defn}  

 Let  $B \subset H^*(X)$ be the domain of convergence of   $\Phi(\tau)$ for $X$. 
We denote the quantum cohomology at $\tau \in B$ as $(H^{*}(X), \ast_{\tau})$. 
  \begin{defn}
       The big quantum cohomology $QH_{\rm big}(X)$  is called  \emph{semisimple} if there exists $\tau \in B$ such that $(H^*(X),\ast_{\tau})$  is isomorphic to a direct sum $\mathbb{C} \oplus \cdots \oplus \mathbb{C}$ as a $\bbC$-algebra. 
  \end{defn}

\subsection{Dubrovin connection} \label{Dub_connection}

 The  Dubrovin connection  $\nabla$   is a meromorphic flat connection on the trivial $H^*(X)$-bundle over $B\times\mathbb{P}^1$, given by 
\begin{align*}
    \nabla=\mathrm{d}+\sum_{i=0}^{N-1}{1\over z}(\xi_i*_\tau)\mathrm{d}t_i-\left({1\over z}\left(E*_{\tau}\right)-\mu\right){\mathrm{d}z\over z}.
\end{align*}
Here $z$ is the inhomogeneous coordinate on $\mathbb{P}^1$, 
  $E:=c_{1}(T_X)+\sum_{k=0}^{N-1}(1-\frac{1}{2} \deg \xi_k)t_k \xi_k$   is the Euler vector field,  
 and $\mu \in {\rm End}(H^{*}(X))$ is the  grading operator defined by $\mu(\xi_i)={1\over2}\left(\deg \xi_i-\dim_\bbC  X\right)\xi_i$. 
Along the $z$-direction, $\nabla$ has a regular singularity at $z=\infty$ and an irregular singularity at $z=0$. 
 
 Let $\widetilde{\bbC^\times}$ be the universal cover of $\bbC^\times=\bbP^1\setminus\{0,\infty\}$, and denote by  $\widetilde{\nabla}$   the pullback of $\nabla$ via $B\times\widetilde{\bbC^\times}\to B\times\bbC^\times$. Denote
$$\mathcal{S}:= \{\widetilde{\nabla}\mbox{-flat sections  of the trivial } H^*(X)\mbox{-bundle over } B\times\widetilde{\bbC^\times}\}.$$

\subsubsection{Flat sections around $z=\infty$}
    
 There is a $\bbC$-linear isomorphism  
\begin{align*}
  \mathcal{Z}: &\,\,\, H^*(X)\xrightarrow{\cong}\mathcal{S};\,\, \alpha \mapsto \left((\tau,z)\mapsto L(\tau,z)z^{-\mu}z^{c_1(X)\cup}\alpha\right).
\end{align*}
Here $z^{-\mu}z^{c_1(X)\cup}:=\mathrm{e}^{-\mu\log z}\mathrm{e}^{\left(c_1(X)\cup\right)\log z}$, and the formal endomorphism $L(\tau,z)\in\End(H^*(X))\dbb{{z^{-1}}}$ (following from \cite[Corollary 6.2]{Giv}) is given by 
\begin{align*}
    L(\tau,z)\alpha:=e^{-\frac{\eta}{z}}\alpha+\sum_{m, k \geq 0; \beta, l}  ({-1\over z})^{m+1}\frac{1}{k!}\langle  \psi_1^{m}({e^{-\frac{\eta}{z}} \alpha}), \xi_l,\tau', \cdots, \tau'  \rangle_{0,k+2, \beta} e^{\int_{\beta} \eta} \xi^{l},
\end{align*}
where  $\eta$ denotes the $H^2(X)$-component of $\tau$ and $\tau':=\tau-\eta$ and $(k,\beta) \neq (0,0)$.

  The Gamma class $\widehat{\Gamma}_{X}$    of $X$    is defined by $$ \widehat{\Gamma}_{X}:=\prod_{i=1}^{\dim X}\Gamma(1+\delta_i) \in H^*(X),$$ where $\delta_i$ are the Chern roots of the tangent bundle $T_X$ of $X$ and $\Gamma(x)$ denotes Euler's Gamma function.   Let $K(X)$ be the Grothendieck group of topological complex vector bundles on $X$,  and  ${\rm ch}$ denote the Chern character.

    \begin{defn} \label{Gamma structure}
     Consider the   group homomorphism 
     $$\mathcal{Z}^K: K(X) \xrightarrow{\quad}\mathcal{S};\,\,
E \mapsto (2\pi)^{-\frac{\dim X}{2}}\mathcal{Z}\Big(\widehat{\Gamma}_X\cup \sum\nolimits_{p\geq 0}(2\pi \mathbf{i})^p{\rm ch}_p(E)\Big).$$
The image $\mathcal{S}_{\mathbb{Z}}:=\mathcal{Z}^K(K(X))$ is called the  $\widehat{\Gamma}$-integral structure of $X$, which was introduced by Iritani \protect{\cite{Iri}}.   
\end{defn}

\subsubsection{Flat sections around $z=0$} \label{z=0}
We assume that the big quantum cohomology $QH_{\rm big}(X)$  is \emph{semisimple}. Then  $B\setminus B_{ss}$ is a divisor in $B$, where
   $$B_{ss}:=\{\tau \in B\mid (H^*(X), *_\tau) \mbox{ is semisimple}\}.$$

 As proved by Dubrovin \cite[Theorem 3.1, Lemma 3.2]{Dub99}, for a sufficiently small  domain $U\subset B_{ss}$,    there are holomorphic coordinates $\{u_i\}$ on $U$ (called \emph{canonical coordinates}), such that for $\tau\in U$, $(H^{*}(X),\ast_{\tau})$ has an idempotent basis $\{\pi_1(\tau),\cdots, \pi_N(\tau)\}$ (i.e. $\pi_i(\tau) \ast_{\tau} \pi_{j}(\tau)=\delta_{i,j}\pi_{i}(\tau)$), and $E(\tau)\ast_{\tau}\pi_{i}(\tau) =u_i(\tau)\cdot\pi_{i}(\tau)$. The functions $u_1,...,u_{N}$ are unique up to ordering. 
 We further assume $U$ to be simply-connected, so that the \emph{normalized idempotent} $\Psi_i(\tau):=\frac{\pi_{i}(\tau)}{\sqrt{(\pi_{i}(\tau),\pi_{i}(\tau))}}$   is a well-defined holomorphic map on $U$ (up to sign). For such a set-up, we simply say that $U$ is \emph{properly-chosen} with respect to  $\{(u_i,\Psi_i)\}_i$.

 We say that a phase $\theta \in \mathbb{R} $ is admissible at $\tau$ if $e^{\mathbf{i} \theta}$ is not parallel to any nonzero difference $u_i(\tau)-u_j(\tau)$ for all $1\leq i,j \leq N$.

\begin{prop}[\!\!\!\protect{\cite[Proposition 2.5.1]{GGI}}]\label{prop:AEFS}
      Suppose that $U \subset B_{ss}$ is properly-chosen with respect to $\{(u_i,\Psi_i)\}_i$, and $\theta \in\bbR$ is an admissible phase at some $\tau_0 \in U$. 
     Then there exist  a unique  analytic basis $\{y_i\}_{i=1}^N$ of $\mathcal{S}$,    $\epsilon>0$ and an open neighborhood $U''\subset U$ of $\tau_0$,    such that 
     \begin{align*} 
         y_{i}(\tau,z) e^{\frac{u_i(\tau)}{z}} \rightarrow \Psi_{i}(\tau), ~\text{as }  z \rightarrow 0 ~\text{ in the sector } |\arg z -\theta| < \frac{\pi}{2}+\epsilon,
     \end{align*}
 for any $\tau\in U''$ and any   $i$.  

\end{prop}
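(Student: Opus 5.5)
The approach is to reduce the Dubrovin connection near $\tau_0$ to the standard irregular-singular setting and invoke the classical existence and uniqueness of asymptotic solutions on large sectors (Sibuya, Wasow, Balser–Jurkat–Lutz), uniformly in parameters. Work in the normalized idempotent frame $\Psi=(\Psi_1,\dots,\Psi_N)$ on the properly-chosen $U$. Writing a flat section as $y=\Psi\,Y$, the $z$-component of $\nabla$ becomes
\[
\partial_z Y=\Big(\frac{U(\tau)}{z^2}+\frac{V(\tau)}{z}\Big)Y ,
\]
up to sign conventions. Here $U=\mathrm{diag}(u_1,\dots,u_N)$, since $E*_\tau$ is diagonal in this frame, and $V=\Psi^{-1}\mu\Psi$ is skew-symmetric, since $\mu$ is skew-adjoint for the Poincaré pairing and $\Psi$ is orthonormal. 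Because $\tau\in B_{ss}$, the $u_i$ are pairwise distinct, so $z=0$ is an irregular singularity of Poincaré rank one with distinct eigenvalues of the leading term.

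First, I will construct the formal fundamental solution
\[
\widehat Y(\tau,z)=R(\tau,z)\,e^{-U/z},\qquad R=\mathrm{Id}+\sum_{k\ge1}R_k(\tau)z^k .
\]
This has no logarithmic or $z^\Lambda$ factor, because the diagonal of $V$ vanishes by skew-symmetry. The $R_k$ are found recursively, off-diagonal parts from commutators with $U$ (using $u_i\ne u_j$) and diagonal parts from the next equation. Each $R_k$ is holomorphic in $\tau$ on $U$.

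Second, for fixed $\tau$ I will use admissibility of $\theta$. The Stokes rays $\{\arg z=\arg(u_i-u_j)\}$ avoid $\theta$ and its opposite, so the sector $S=\{|\arg z-\theta|<\pi/2+\epsilon\}$, for small $\epsilon$, contains exactly one ray from each opposite pair. By the main asymptotic existence theorem on such a maximal sector, there is a unique actual solution $Y_S$ with $Y_S e^{U/z}\sim R$, in particular tending to $\mathrm{Id}$. Its columns give $y_i$ with $y_ie^{u_i/z}\to\Psi_i$. Admissibility is an open condition, so I will choose $U''$ small enough that $\theta$ stays admissible with a uniform margin $\epsilon$. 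I will then apply the parameter-dependent version, where the Stokes multipliers are triangular and determined by Laplace–Borel or Sibuya's theory holomorphic in parameters, to get $y_i$ holomorphic in $\tau\in U''$.

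Third, I need to upgrade these $z$-solutions to $\nabla$-flat sections in $\tau$ as well. By flatness, any $z$-solution can be extended to a full flat section; call this extension $\tilde y_i$. Restricted to each fixed $\tau$, $\tilde y_i$ still solves the $z$-equation, and its asymptotics are preserved. That is where uniqueness does the work. Uniqueness holds because two solutions with the same asymptotics in a sector of opening $>\pi$ differ by $\sum_j c_j y_j$ with $c_j e^{(u_i-u_j)/z}\to0$ on all of $S$. Since $S$ has opening greater than $\pi$, for every $j\ne i$ there is a direction in $S$ where $e^{(u_i-u_j)/z}$ blows up, which forces $c_j=0$.

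The main obstacle is the uniformity in $\tau$ in the second step. One must ensure that the sector opening $\pi/2+\epsilon$ and the asymptotics hold simultaneously for all $\tau\in U''$. I would handle this by shrinking $U''$ so that all $\arg(u_i-u_j)$ stay in compact sets disjoint from $\theta,\theta+\pi$, and then citing Sibuya's uniform theorem; alternatively, one can use the isomonodromic argument that the Stokes data are locally constant in $\tau$ (Dubrovin).
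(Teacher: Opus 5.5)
There is a genuine gap. The step ``Because $\tau\in B_{ss}$, the $u_i$ are pairwise distinct'' is false, and the statement is written to include exactly the case where it fails. Semisimplicity only makes the $u_i$ local coordinates; their values can coalesce. For example, $c_1*_0$ on $Gr(2,4)$ has the double eigenvalue $0$ at a semisimple point. This is why the paper separates $B_{ts}$ from $B_{ss}$, defines admissibility through \emph{nonzero} differences $u_i-u_j$, and orders the $u_i$ with non-strict inequalities in Gamma conjecture II. As written, your argument only covers $\tau_0\in B_{ts}$.

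At a point with $u_i(\tau_0)=u_j(\tau_0)$, $i\neq j$, two steps break.
\begin{itemize}
\item In the formal recursion, the $(i,j)$ entries cannot be solved through commutators with $U$. You need $V_{ij}=0$ for every such pair, not only $V_{ii}=0$. Skew-symmetry does not give this. Flatness in the $\tau$-directions does: up to sign, $V_{ij}=(u_i-u_j)(\Gamma_j)_{ij}$, where $\Gamma_j$ is the holomorphic connection matrix of $\{\Psi_k\}$ along $\partial_{u_j}$.
\item More seriously, the uniformity step cannot be done as you propose. The difference $u_i(\tau)-u_j(\tau)$ sweeps out a neighbourhood of $0$, so no $U''$ keeps $\arg(u_i-u_j)$ away from $\theta$ and $\theta+\pi$. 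Admissibility therefore fails at points arbitrarily close to $\tau_0$, and a parameter-dependent Sibuya theorem does not apply across the coalescence locus.
\end{itemize}
The missing ingredient is one of two things. Either you need that the Stokes data between coalescing pairs are trivial (Cotti--Dubrovin--Guzzetti, cf.~\cite{CDG}). Or you use the Laplace construction recalled in Section~\ref{sec:AEFSviaLaplace}, which is how the paper, following \cite{GGI}, obtains the AEFS. There, $\hat y_i$ is holomorphic on a whole neighbourhood of $(\tau_0,u_i(\tau_0))$ in $U\times\mathbb{C}_\lambda$, which contains the nearby points of $D_j$. Hence the integrals over $u_i(\tau)+\mathbb{R}_{\ge0}e^{\mathbf{i}\theta'}$ with $|\theta'-\theta|<\epsilon$ give the asymptotics uniformly in $\tau$ by Watson's lemma, and flatness in $\tau$ is inherited from $\widehat\nabla$-flatness.

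Two smaller points matter even on $B_{ts}$.
\begin{itemize}
\item In your second step, the relevant rays are $\{\mathrm{Re}((u_i-u_j)/z)=0\}$, i.e.\ $\arg z=\arg(u_i-u_j)\pm\pi/2$; the sector $S$ must not contain both rays of any such pair. With your rays $\arg z=\arg(u_i-u_j)$, the claim that $S$ contains exactly one ray of each opposite pair does not follow from admissibility. Take $\arg(u_i-u_j)=\theta+\pi/2$: this is admissible, yet both rays lie in $S$.
\item In your third step, ``its asymptotics are preserved'' is exactly what has to be shown; uniqueness alone does not give it. The standard fix is to note that the formal solution $\Psi Re^{-U/z}$ is itself $\tau$-flat. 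Its covariant $\tau$-derivative is again a formal solution of the form $\Psi RDe^{-U/z}$ with $D$ diagonal. Here $D$ equals the diagonal of the skew-symmetric connection matrix of $\{\Psi_k\}$, so $D=0$. Differentiating the uniform asymptotics in $\tau$ via Cauchy estimates then gives $e^{u_i/z}\nabla_{\partial_{t}}y_i\to0$ on a sector of opening $>\pi$. Your uniqueness argument then forces $\nabla_{\partial_{t}}y_i=0$.
\end{itemize}
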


\begin{defn}
The basis $\{y_i\}_i$ of $\mathcal{S}$ in Proposition \ref{prop:AEFS}  is called the \emph{asymptotically exponential fundamental solution} (\emph{AEFS} for short) near $\tau_0$ respecting the normalized idempotents  $\{\Psi_i\}_i$ with phase $\theta$. 
\end{defn}

\subsection{Gamma conjecture II } \label{statement of GammaII}
 Roughly speaking, Gamma conjecture II of Galkin-Golyshev-Iritani 
   expects that an AEFS lies in the $\widehat{\Gamma}$-integral structure $\mathcal{S}_\bbZ$. 
   We first recall the relevant derived category terminology.
     \begin{defn}
    An object $F$ in a bounded derived category  $\mathcal{D}^{b}(X)$ of coherent sheaves on $X$ is called exceptional if $~{\rm Hom} (F,F)=\mathbb{C}$ and ${\rm Hom}(F,F[j])=0$ for all $j\neq 0$.  An ordered tuple of exceptional objects $E_1,\cdots, E_{m}$ is called an \textbf{exceptional collection} if ${\rm Hom} (E_i,E_{j}[k])=0$ for all $i,j,k$ with $i>j$ and $k\in \mathbb{Z}$. An exceptional collection is called \textbf{full} if it generates the whole category $\mathcal{D}^{b}(X)$.
\end{defn}

\begin{conjecture}[Gamma  conjecture II] \label{Gamma conjII}
Assume that: (1) big quantum cohomology of $X$ converges and is semisimple; (2)   $\mathcal{D}^{b}(X)$ admits a full exceptional collection.
 Suppose that $U\subset B_{ss}$ is properly-chosen with respect to $\{(u_i,\Psi_i)\}_i$,  and $\phi\in\bbR$ is an admissible phase at some $\tau_0 \in U$, where   $u_i$ are ordered so that $\mathrm{Im}\left(e^{-\mathbf{i}\phi}u_1(\tau_0)\right)\ge...\ge\mathrm{Im}\left(e^{-\mathbf{i}\phi}u_{N}(\tau_0)\right).$ 
     Let $\{y_i\}_i$ be the corresponding AEFS.       
     Then there exists a full exceptional collection $(E_1,\cdots,E_{N})$ of $\mathcal{D}^b(X)$ such that for each $1\leq i \leq N$,  
       $$y_i(\tau_0,z)=\mathcal{Z}^{K}(E_{i})(\tau_0,z).$$
\end{conjecture}

\begin{remark}
 By \cite[Remark 4.6.3]{GGI} and \cite[Remark 4.13]{GaIr}, 
      the validity of Gamma conjecture II does not depend on the choices of $\tau_0$ and $\phi$. 
\end{remark}

\section{A criterion for Gamma conjecture II}

Throughout this section, we assume that the big quantum cohomology $QH_{\rm big}(X)$  is \emph{semisimple}.

\subsection{AEFS via Laplace transformation }\label{sec:AEFSviaLaplace}

We follow \cite[Section 2]{GGI} to briefly review the construction of AEFS via Laplace transformation.

Let $U\subset B_{ss}$ be properly-chosen with respect to $\{(u_i,\Psi_i)\}$. Consider the Laplace-dual connection $\widehat{\nabla}$ of $ \nabla $. 
It is a meromorphic flat connection on the trivial $H^{*}(X)$-bundle
over $U \times \bbC_{\lambda} $, where $\bbC_{\lambda}$ is a copy of $\bbC$ with coordinate $\lambda$.
Let $D_i$ be the smooth divisor in $U \times \bbC_{\lambda} $ defined by $\lambda=u_i $, and let $D_{\infty}:=U \times \{\infty\}$
be the divisor at infinity. Then $\widehat{\nabla}$ has only logarithmic singularities at $D_1, \cdots,D_N, D_{\infty}$. For $(\tau_0,\lambda_0)\in D_i$, there exists a unique $\widehat{\nabla}$-flat section $\hat{y}_{i}(\tau,\lambda)$ which is holomorphic near $(\tau_0,\lambda_0)$ and $\hat{y}_{i}(\tau, u_i(\tau))=\Psi_i(\tau)$. Let $\phi$ be an admissible  phase at $\tau_0$, and we define the Laplace transform of $\hat{y}_i$ associated to $\phi$ with respect to $u_i(\tau)$ by

\begin{align}
    y_i(\tau,z)=\frac{1}{z}\int_{u_i(\tau)+\bbR_{\geq 0} e^{\mathbf{i}\phi} } \hat{y}_i(\tau,\lambda) e^{-\frac{\lambda}{z}}d \lambda.  
\end{align}
Then $\{y_i\}$ is the AEFS respecting $\{\Psi_i\}$ with phase $\phi$.

\subsection{Analytic continuation of Dubrovin connection} \label{B_ts}
 Define 
$$B_{ts}:=\{\tau \in B_{ss} | \mbox{ eigenvalues of} ~E\ast_{\tau} \mbox{are pairwise distinct } \} $$
Let $U^{'} \subset B_{ts}$ be a properly-chosen neighborhood of $\tau_0 \in B_{ts}$ with respect to $\{(u_i, \Psi_i)\}_{1 \leq i \leq N}$. Recall that both $t_i$'s and $u_i$'s are coordinates on $U'$, so  $t_i$'s can be regarded as functions of the chosen canonical coordinates $u_i$'s.
Set $$\Delta := \{(u_1, \ldots , u_N) \in \mathbb{C}^N: u_i = u_j \text{ for some } i \neq j\},$$ 
and write $\mathbf{w}_0 := (u_1(\tau_0), u_2(\tau_0),\ldots , u_N(\tau_0)) \in \mathbb{C}^N \backslash \Delta .$ 
Let $p:\mathcal{C} \rightarrow \mathbb{C}^N \backslash \Delta $ be the universal covering based at $\mathbf{w}_0 $,  realized as homotopy classes of paths starting at $\mathbf{w}_0$. By abuse of notation, we still denote by $\mathbf{w}_0$ the point in $\mathcal{C}$ corresponding to the trivial loop at $\mathbf{w}_0$, and use the chosen canonical coordinates $u_i$'s to identify the open neighborhood $U^{'}$ of $\tau_0$ with a neighborhood of $\mathbf{w}_0$ in $\mathcal{C}$. Then via the diagram
\[
\begin{CD} 
U'  \xhookrightarrow{\qquad}   \mathbb{C}^N\backslash\Delta @<{~p}<<  \mathcal{C}, \\
\end{CD}
\]
we regard $u_i$'s as holomorphic functions on  $ \mathcal{C}$.
From \cite[Theorem 4.5, Theorem 4.6]{Dub99}, the Dubrovin connection  admits a unique isomonodromic deformation over $\mathcal{C}$. We still denote it by $\nabla$ by abuse of notation. 
More precisely, there is a unique meromorphic flat connection $\nabla$ on the trivial $H^* (X)$-bundle over $\mathcal{C} \times \mathbb{P}^1$, which reads with respect to a frame of constant sections
    \begin{align*}
    \nabla=\mathrm{d}+\sum_{j=1}^{N}{\mathscr{C}_j\over z} \mathrm{d}u_j-\left({\mathscr{U} \over z}-\mu\right){\mathrm{d}z\over z}.
\end{align*}
Here $\mathscr{C}_j$ and $\mathscr{U}$ are $\mathrm{End}(H^*(X))$-valued meromorphic functions on $\mathcal C$, such that $\nabla$ restricts to the Dubrovin connection on $U'\times\mathbb P^1$, and $ \mathscr{C}_j (\Psi_i)=\delta_{ij}\Psi_i$, $\mathscr{U}(\Psi_i)=u_i \Psi_i$.

This isomonodromic deformation defines a semisimple Frobenius manifold structure on a dense open subset $\mathcal{B}$ of $\mathcal{C}$, such that the complement $\mathcal{C} \backslash \mathcal{B}$ is a divisor in $\mathcal C$.  We have $U^{'} \hookrightarrow \mathcal{B}$ via the above mentioned identification, and
$\mathcal{B}$ is connected.
Now we can analytically continue the functions $u_i$ and $\Psi_i$ from $U'$ to $\mathcal{B}$. Moreover, as functions of the chosen canonical coordinates on $\mathcal{B}$, from \cite[(4.54)]{Dub99}, the functions $t_i$'s can be analytically continued to $\mathcal{B}$. So we  use this analytic continuation to define cohomology framing and $K$-group framing on $\mathcal{B}$, denoted as $\mathcal{Z}$ and $\mathcal{Z}^{K}$ by abuse of notation.

Moreover, the Laplace-dual connection $\widehat{\nabla}$ can also be analytically continued to $\mathcal{B} \times \mathbb{C}_\lambda$ (see also \cite[Lecture 5]{Dub99}).
 Laplace transformation of $\widehat{\nabla}$-flat sections gives a local AEFS for ${\nabla}$.
 We can therefore formulate Gamma conjecture II for $X$ over $\mathcal{B}$, that is, for $(\mathbf{w}, \phi) \in \mathcal{B} \times \mathbb{R}$ with  admissible phase $\phi$ at $\mathbf{w}$, matching the corresponding AEFS with flat sections from a full exceptional collection via $\mathcal{Z}^{K}$. Now it follows from \cite[Section 4]{GGI} that the validity of Gamma II does not depend on the choice of $(\mathbf{w}, \phi)$.

\subsection{A criterion for Gamma conjecture II}

 For a smooth embedding $$\gamma: \mathbb R_{\ge0}\rightarrow \mathbb C_\lambda,$$ we say that it is \textbf{admissible at $\tau_0$ with end-phase $\phi\in\mathbb R$}, if $\gamma(0)\in{\rm Spec}(E*_{\tau_0})$, $\gamma(\mathbb R_{>0})\cap{\rm Spec}(E*_{\tau_0})=\varnothing$, and $\gamma(t)$ is linear with $\arg\gamma'(t)=\phi$ near $t=\infty$. Here $\mbox{Spec}(E*_{\tau_0})$  denotes  the multiset of eigenvalues of the linear operator $E*_{\tau_0}$ on $(H^*(H_{n, m}),*_{\tau_0})$.

The following theorem gives a criterion for Gamma conjecture II.

\begin{thm} \label{criterion}
For a Fano manifold $X$, let $\mathcal{E} = (E_{1}, \ldots, E_{N})$ be a full exceptional collection of $\mathcal{D}^{b}(X)$. Let $U^{'} \subset B_{ts}$ be a properly-chosen neighborhood of $\tau_0 \in B_{ts}$ with respect to $\{(u_i, \Psi_i)\}_{1 \leq i \leq N}$.
Assume that there exist $t_0\in\mathbb R_{\geq 0}$, $\phi_1,\dots,\phi_N\in\mathbb R$ with $\phi_1\ge\dots\ge\phi_N$ and $\phi_1-\phi_N<2\pi$,  and admissible paths $\gamma_i$ at $\tau_0$ with end-phase $\phi_i$,  such that
\begin{enumerate}
\item[(i)] $\gamma_i(0)=u_i(\tau_0)$, $\gamma'_i(t)$ is constant for $ t \ge t_0$;
 \item[(ii)] $\mathcal{Z}^{K}(E_{i})(\tau_0,z)=\frac{1}{z}\int_{\gamma_i } \hat{y}_i(\tau_0,\lambda) e^{-\frac{\lambda}{z}}d \lambda$ when $\mathrm{Re}(\frac{e^{\mathbf i\phi_i}}{z})>0$, where $\hat y_i$ is as in Section \ref{sec:AEFSviaLaplace};
\item[(iii)] if $\phi_i=\phi_j$ for $i<j$, then ${\rm Im} (e^{-\mathbf{i}\phi_i} \gamma_{i}(t)) > {\rm Im} (e^{-\mathbf{i}\phi_j} \gamma_{j}(t))$   when  $t\ge t_0$;
\item[(iv)]  the images of $\{\gamma_i\}_{1\leq i \leq N}$ are pairwise disjoint.
\end{enumerate}
Then there exist $\mathbf{w}^{*} \in \mathcal{B}$ and $\phi^*\in\mathbb R$ such that:
\begin{enumerate}
\item[(1)] ${\rm Im} (e^{-\mathbf{i}\phi^*} u_{1}(\mathbf{w}^{*})) > {\rm Im} (e^{-\mathbf{i}\phi^*} u_{2}(\mathbf{w}^{*})) > \dots > {\rm Im} (e^{-\mathbf{i}\phi^*} u_{N}(\mathbf{w}^{*}))$;
\item[(2)] $\{\mathcal{Z}^{K}(E_{i})(\mathbf{w}^{*},z)\}_{i}$
is the AEFS near $\mathbf{w}^{*}$ respecting $\{\Psi_i\}_i$ with phase $\phi^*$.
\end{enumerate}
In particular, Gamma conjecture II holds for $X$.
\end{thm}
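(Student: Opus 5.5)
The plan is to use Dubrovin's isomonodromic deformation over the universal cover $\mathcal{C}$ of $\mathbb{C}^N\setminus\Delta$ from Section~\ref{B_ts}. We move the canonical coordinates $\mathbf{w}(s)=(u_1(s),\dots,u_N(s))$, $s\in[0,1]$, together with the integration paths $\gamma_i^{s}$, until every path becomes a straight half-line $u_i(1)+\mathbb{R}_{\ge0}e^{\mathbf{i}\phi^*}$ with a common phase. At that stage the Laplace integrals are exactly the AEFS construction of Section~\ref{sec:AEFSviaLaplace}.

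The key input is a deformation-invariance statement. The $\widehat{\nabla}$-flat sections $\hat y_i$ are continued along the deformation, and each integral $\frac1z\int_{\gamma_i^s}\hat y_i(\mathbf{w}(s),\lambda)e^{-\lambda/z}d\lambda$ remains a $\nabla$-flat section. This section is the parallel transport, under the isomonodromic (flat in $\mathbf{w}$) connection, of the one at $s=0$. We need three conditions throughout the deformation:
\begin{itemize}
\item[(a)] $\gamma_i^s$ starts at $u_i(\mathbf{w}(s))$;
\item[(b)] $\gamma_i^s$ avoids all other $u_j(\mathbf{w}(s))$;
\item[(c)] the tails stay in a fixed direction for which $e^{-\lambda/z}$ decays.
\end{itemize}
Differentiating in $\mathbf{w}$ uses the flatness of $\widehat{\nabla}$ in the $u$-directions, with logarithmic poles only along the $D_j$. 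The endpoint contribution is handled by the normalization $\hat y_i(u_i)=\Psi_i$, as in GGI, Section~2. Since $\mathcal{Z}^K(E_i)$ is defined on $\mathcal{B}$ by analytic continuation, condition (ii) propagates, and at $s=1$ we get $\mathcal{Z}^K(E_i)(\mathbf{w}^*,z)=y_i(\mathbf{w}^*,z)$. We must also ensure that the path in $\mathcal{C}$ ends in $\mathcal{B}$. Since $\mathcal{C}\setminus\mathcal{B}$ is a divisor, a generic perturbation of the path suffices, and $\mathcal{Z}^K$ continues across it anyway.

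For the construction, first rotate the tails. Because $\phi_1\ge\dots\ge\phi_N$ and $\phi_1-\phi_N<2\pi$, we can choose $\phi^*$ and rotate the far tail of each $\gamma_i$ (beyond a large radius $R$) from $\phi_i$ to $\phi^*$ along large arcs. By (iii), paths with equal phases are ordered in the correct way. Paths with distinct phases nest without crossing when rotated toward the common direction, because the angular order matches the index order. The disjointness (iv) is kept by placing the arcs at radii increasing with $i$ appropriately.

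Next, straighten the paths. Parametrize each $\gamma_i$ and set $u_i(s):=\gamma_i(\sigma_i(s))$, sliding the starting point outward along the path itself; the remaining path is the tail $\gamma_i|_{[\sigma_i(s),\infty)}$. Disjointness of the images keeps $\mathbf{w}(s)\notin\Delta$ and keeps condition (b). Eventually all starting points lie on the straight parallel tails. The strict ordering in (iii), now for the common phase $\phi^*$, yields conclusion (1), and $\phi^*$ is then admissible at $\mathbf{w}^*$. Finally, the Gamma conjecture II follows from the independence of $(\mathbf{w},\phi)$ noted at the end of Section~\ref{B_ts}, since the AEFS at $(\mathbf{w}^*,\phi^*)$ coincides with $\mathcal{Z}^K$ of the full exceptional collection $\mathcal{E}$.

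The main obstacle is the tail rotation. It must keep the paths pairwise disjoint and convergent, which requires controlling the growth of $\hat y_i$ at $\lambda\to\infty$; the regular singularity at $D_\infty$ gives polynomial growth, which suffices. It must also keep the convergence sector consistent with (ii) when $z$ is rotated simultaneously. I would handle this by performing the rotation inside a large annulus where no eigenvalues lie, and by rotating $\arg z$ together with the tails.
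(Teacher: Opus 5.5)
Your proposal is correct, but it reaches the final configuration by a different route from the paper.

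\textbf{The paper's route.} The paper never combs the tails. It lifts $s\mapsto(\gamma_1(sT),\dots,\gamma_N(sT))$ to $\mathcal C$ and perturbs the whole path into $\mathcal B$. It then slides each $u_i$ along its own $\gamma_i$, using the shifted contours $\gamma_{s,i}(t)=\gamma_i(sT+t)+\delta_i(s)$. At $s=1$ this gives pairwise disjoint straight half-lines $L(u_i(\mathbf w'),\phi_i)$ that still carry the original, possibly equal, phases $\phi_i$. A slight rotation together with Watson's lemma makes these phases strictly decreasing. The passage to a common phase with ordered $u_i$ is then delegated to the earlier straight-line criterion \cite[Theorem 3.8]{HuKe}.

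\textbf{Your route.} You first rotate all tails to a common phase $\phi^*$ at the fixed point $\tau_0$. This is legitimate for three reasons:
\begin{itemize}
\item each Laplace integral depends only on the homotopy class of its own contour in $\mathbb C\setminus\{u_j\}_{j\neq i}$;
\item a rotation through an angle less than $\pi$ is justified by Cauchy's theorem on the overlap of the two $z$-sectors, using moderate growth of $\hat y_i$ at $\lambda=\infty$;
\item disjointness of different contours is needed only for the later sliding, not for the rotation.
\end{itemize}
You then perform a single isomonodromic slide onto parallel tails, so conclusion (1) and the AEFS identification come straight out of the construction. This makes the criterion self-contained; in effect you reprove the Hu--Ke criterion along the way. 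The paper's route avoids having to design the combing and reuses an existing result.

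\textbf{Points to tighten.}
\begin{itemize}
\item Choose $\phi^*$ with $|\phi_i-\phi^*|<\pi$ for every $i$, for example $\phi^*=(\phi_1+\phi_N)/2$. This is exactly where the hypothesis $\phi_1-\phi_N<2\pi$ enters.
\item The nesting rule is not ``radii increasing with $i$''. Among tails with $\phi_i>\phi^*$, the turning radius must decrease as $i$ increases. Among tails with $\phi_i<\phi^*$, it must increase with $i$. Condition (iii) makes the same rule work for ties.
\item Each arc must be stopped at a height $\mathrm{Im}(e^{-\mathbf i\phi^*}\lambda)$ compatible with the final ordering, rather than continued all the way to direction $\phi^*$. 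Otherwise it meets tails that have already been turned.
\item The perturbation into $\mathcal B$ must be applied to the whole path, not just its endpoint, with the contours shifted accordingly as in the paper. Your remark that $\mathcal Z^K$ ``continues across'' $\mathcal C\setminus\mathcal B$ is unnecessary and should be dropped.
\end{itemize}
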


\begin{remark}
    This theorem is a generalization of \cite[Theorem 3.8]{HuKe}; here the admissible paths are allowed to be curved, and their end-phases are not required to be pairwise distinct.
\end{remark}

\begin{figure}[htbp]
\centering
\begin{tikzpicture}[
  x=1.1cm,y=1.1cm,
  every node/.style={font=\fontsize{8}{9.5}\selectfont},
  every path/.style={MilnorInk},
  line cap=round,
  line join=round
]

\draw[milnor path]
  (0.20,-0.65)
  .. controls (0.70,-0.48) and (1.15,-0.67) .. (1.60,-0.60)
  .. controls (2.15,-0.52) and (2.40,-0.25) .. (2.30,0.10)
  .. controls (2.23,0.38) and (1.90,0.48) .. (1.65,0.42)
  .. controls (1.35,0.35) and (1.20,0.40) .. (0.85,0.20)
  .. controls (0.55,0.02) and (0.35,0.05) .. (0.05,0.05)
  .. controls (-0.48,0.05) and (-1.02,0.60) .. (-1.30,0.95)
  -- (-2.50,2.45)
  node[above left=3pt] {$\gamma_1$};

\draw[milnor path]
  (0.32,0.28)
  .. controls (0.35,0.62) and (0.15,0.69) .. (-0.12,0.73)
  .. controls (-0.55,0.79) and (-0.70,1.22) .. (-0.85,1.67)
  -- (-1.13,2.51)
  node[above=3pt] {$\gamma_2$};

\draw[milnor path]
  (0.00,1.02)
  .. controls (0.38,0.90) and (0.80,0.70) .. (1.18,0.69)
  .. controls (1.70,0.68) and (2.12,0.95) .. (2.70,0.95)
  -- (3.45,0.95)
  node[right=3pt] {$\gamma_3$};

\draw[milnor path]
  (1.05,0.86)
  .. controls (0.75,1.20) and (0.25,1.48) .. (-0.15,1.52)
  .. controls (-0.48,1.56) and (-0.54,1.42) .. (-0.52,1.18)
  .. controls (-0.48,0.88) and (-0.12,0.83) .. (0.38,0.70)
  .. controls (0.95,0.55) and (1.40,0.56) .. (1.95,0.56)
  -- (3.95,0.56)
  node[right=3pt] {$\gamma_4$};

\draw[milnor path]
  (1.15,-0.20)
  .. controls (0.70,-0.12) and (0.10,-0.10) .. (-0.20,-0.35)
  .. controls (-0.65,-0.65) and (-0.25,-1.02) .. (0.18,-1.24)
  .. controls (0.65,-1.48) and (1.10,-1.64) .. (1.60,-1.82)
  -- (4.10,-2.72)
  node[right=3pt] {$\gamma_5$};

\foreach \x/\y in {
  0.20/-0.65,
  0.32/0.28,
  0.00/1.02,
  1.05/0.86,
  1.15/-0.20
}{
  \fill[MilnorInk] (\x,\y) circle (1.25pt);
}

\node[left=3pt,inner sep=0pt]       at (0.20,-0.65) {$u_1$};
\node[left=3pt,inner sep=0pt]       at (0.32,0.28)  {$u_2$};
\node[above left=3pt,inner sep=0pt] at (0.00,1.02)  {$u_3$};
\node[right=3pt,inner sep=0pt]      at (1.05,0.86)  {$u_4$};
\node[right=3pt,inner sep=0pt]      at (1.15,-0.20) {$u_5$};

\end{tikzpicture}
\caption{Contours $\gamma_i$ from
  $u_i(\widetilde b(0))$ in the $\mathbb{C}_\lambda$ plane.}
\label{fig:criterion-contours}
\end{figure}

\begin{proof}
We first sketch our proof. We will first find a path $\widetilde b(s):[0,1]\to\mathcal{B}$ starting from $\widetilde b(0)=\tau_0\in U'$ and some large $T$, such that  
\begin{itemize}
 \item[(i)] $u_i(\widetilde b(s))$ moves almost along $\gamma_i$ in $\mathbb{C}_\lambda$, and $u_i(\widetilde b(1))$ is near $\gamma_i(T)$;
\item[(ii)] the half lines  $L(u_{i}(\widetilde b(1)), \phi_i):=u_{i}(\widetilde b(1))+\bbR_{\geq 0} e^{\mathbf{i}\phi_i}$ are disjoint in $\mathbb{C}_{\lambda}$.
\end{itemize}
Then for each $i$, we analytically continue the $\widehat\nabla$-flat section $\widehat{y}_i$ along $u_i(\widetilde b(s))$; the Laplace transform of $\widehat{y}_i$ along suitably chosen contours (see Figures \ref{fig:criterion-contours} and \ref{fig:criterion-rays}) gives a family of $\nabla_{\partial_z}$-flat sections depending on $s$, and we show that they are all equal to $\mathcal{Z}^{K}(E_{i})$. Finally, we use the criterion of \cite[Theorem 3.8]{HuKe} at $s=1$ to conclude the theorem.

\begin{figure}[htbp]
\centering
\begin{tikzpicture}[
  x=1cm,y=1cm,
  every node/.style={font=\fontsize{8}{9.5}\selectfont},
  contour/.style={draw=MilnorInk,line width=.6pt}
]
\coordinate (u1) at (1.80,3.30);
\coordinate (u2) at (2.65,3.85);
\coordinate (u3) at (5.00,3.40);
\coordinate (u4) at (4.95,2.70);
\coordinate (u5) at (4.70,1.25);

\draw[contour] (u1) -- (0.00,5.50);
\draw[contour] (u2) -- (1.35,6.10);
\draw[contour] (u3) -- (7.65,3.80);
\draw[contour] (u4) -- (8.25,3.15);
\draw[contour] (u5) -- (6.45,0.62);

\foreach \i in {1,...,5}{
  \fill[MilnorInk] (u\i) circle (1.2pt);
}

\node[below=4pt]      at (u1) {$u_1$};
\node[below=4pt]      at (u2) {$u_2$};
\node[left=4pt]       at (u3) {$u_3$};
\node[left=4pt]       at (u4) {$u_4$};
\node[above left=3pt] at (u5) {$u_5$};
\end{tikzpicture}
\caption{Contours
$L(u_i(\widetilde b(1)),\phi_i)$ from
$u_i(\widetilde b(1))$ in the $\mathbb{C}_\lambda$ plane.}
\label{fig:criterion-rays}
\end{figure}

\medskip

\noindent  \textbf{The detailed arguments are presented in the following.}

Let $T\in\mathbb R_{>0}$ be such that $\gamma_i'(t)$ is constant for $t\ge T$. Set
$$
a(s):=
\bigl(\gamma_1(sT),\ldots,\gamma_N(sT)\bigr)\in\mathbb C^N,
\qquad 0\leq s\leq1.
$$ 
Since $\gamma_1,\dots,\gamma_N$ are pairwise disjoint, it follows that   $a(s) \in \mathbb{C}^N\setminus\Delta$. Now the path $a:[0,1] \rightarrow \mathbb C^N\setminus\Delta$  has a unique lift $\widetilde a:[0,1]\longrightarrow\mathcal C$, i.e. $p \circ \widetilde a=a$, satisfying $\widetilde a(0)=\tau_0$.
Here we regard $\tau_0$ as the point in $\mathcal C$ corresponding to the constant loop, and view $U'$ as an open neighborhood of $\tau_0\in\mathcal{C}$.

Note that
 there exists a positive number $d_0$ such that
\[
\operatorname{dist}
\bigl(\gamma_i([0,\infty)),\gamma_j([0,\infty))\bigr)> 6d_0,\textrm{ when }i\ne j.
\]
For $s\in[0,1]$, let $N_s$ be the open neighborhood of $\tilde a(s)$ in $\mathcal C$ such that $N_s\xrightarrow{p}p(N_s)\subset\mathbb C^N\setminus\Delta$ is a homeomorphism and $p(N_s)=B(u_1(\widetilde a(s)),d_0)\times\dots\times B(u_N(\widetilde a(s)),d_0)$. Here $B(u,d)$ is the open ball centered at $u$ with radius $d$. 
By compactness of $[0,1]$, we can choose \[0=s_0<s'_1<s_1<\dots<s'_k<s_k=1,\] such that $\widetilde a([s_0,s'_1])\subset N_{s_0}$, $\widetilde a([s'_k,s_k])\subset N_{s_k}$, and
\[
\widetilde a([s'_j,s'_{j+1}])\subset N_{s_j}\textrm{ for }j=1,\dots,k-1.
\]
Since $\mathcal C\setminus\mathcal B$ is a divisor in $\mathcal C$, it follows that $N_{s_j}\cap\mathcal B$ is non-empty and connected, and $N_{s_{j-1}}\cap N_{s_j}\cap\mathcal B$ is non-empty. Let $\tau_{s'_j}\in N_{s_{j-1}}\cap N_{s_j}\cap\mathcal B$. Then there is a path $\widetilde b:[0,1]\to\mathcal B$ satisfying: (1) $\widetilde b({s'_j})=\tau_{s'_j}$ for $j=1,\dots,k$ and $\widetilde b(0)=\tau_0$; (2) $\widetilde b([s_0,s'_1])\subset N_{s_0}$, $\widetilde b([s'_k,s_k])\subset N_{s_k}$, and
\begin{align*}
\widetilde b([s'_j,s'_{j+1}])\subset N_{s_j}\textrm{ for }j=1,\dots,k-1.
\end{align*}
Then one can check that
\[
\left|u_i(\widetilde b(s))-\gamma_i(sT)\right|<2d_0,\textrm{ when }0\leq s\leq1,\ 1\leq i\leq N.
\]
For $s\in[0,1]$, set $\delta_i(s):=u_i(\widetilde b(s))-\gamma_i(sT)$,
and define the path $\gamma_{s,i}:[0,+\infty)\to\mathbb C_\lambda$ by
\[
\gamma_{s,i}(t):=
\gamma_i(sT+t)+\delta_i(s),
\qquad t\geq0.
\]
Note that
$ 
\gamma_{0,i}=\gamma_i$, and we view $\gamma_{s,i}$ as a deformation of $\gamma_i$ with starting point $\gamma_{s,i}(0)=u_i(\widetilde b(s))$.
For $i\neq j$ and $t,t'\geq0$, we have
\[
\begin{aligned}
\left|\gamma_{s,i}(t)-\gamma_{s,j}(t')\right|
&\geq
\left|\gamma_i(sT+t)-\gamma_j(sT+t')\right|
-|\delta_i(s)|-|\delta_j(s)|\\
&>6d_0-2d_0-2d_0>0.
\end{aligned}
\]
Thus the paths $\gamma_{s,1},\dots,\gamma_{s,N}$ are pairwise disjoint, and consequently each $\gamma_{s,i}$ does not pass through $u_j(\tilde b(s))$ when $j\ne i$. Moreover, each $\gamma_{s,i}$ is an embedding. So we conclude that the path $\gamma_{s,i}$ is admissible at $\widetilde b(s)$ with end-phase $\phi_i$.

Set $F=\widetilde b \times {\rm id}:[0,1] \times \mathbb{P}^1 \rightarrow \mathcal{B}\times \mathbb{P}^1$. Then with respect to a frame of constant sections, the connection $F^*\nabla$ reads
       \begin{align*}
    F^*\nabla=\mathrm{d}+{1\over z}\sum_{j=1}^{N}\partial_s[u_j (\widetilde b(s))]\mathscr{C}_j(\widetilde b(s))\,\mathrm{d} s-\left({\mathscr{U}(\widetilde b(s)) \over z}-\mu\right){\mathrm{d}z\over z}.
\end{align*} 
Note that we can analytically continue $\widehat y_i(\tau,\lambda)$ from $(\tau,\lambda)=(\tau_0,u_i(\tau_0))$ to $(\tau,\lambda)=(\widetilde b(s),\gamma_{s,i}(t))$ as follows: first along the curve $s\mapsto(\widetilde b(s),\gamma_{s,i}(0))$, and then along the curve $t\mapsto(\widetilde b(s),\gamma_{s,i}(t))$. In particular, this obtained $\widehat \nabla$-flat section $\widehat y_i(\tau,\lambda)$ is holomorphic near $(\widetilde b(s),\gamma_{s,i}(0))$ and satisfies $\widehat{y_i}(\widetilde b(s),\gamma_{s,i}(0))=\Psi_i(\widetilde b(s))$. 
For $z$ satisfying $\operatorname{Re}(\frac{e^{\mathbf{i}\phi_i}}{z})>0$, define
\[
I_i(s,z):=
\frac1z\int_{\gamma_{s,i}}
\widehat y_i(\widetilde b(s),\lambda)
e^{-\frac{\lambda}{z}}\,{\mathrm d}\lambda=\frac1z\int_0^\infty\widehat{y}_i(\widetilde b(s),\gamma_{s,i}(t))e^{-\frac{\gamma_{s,i}(t)}{z}}\partial_t\gamma_{s,i}(t)\,\mathrm{d} t.
\]
This integral converges since $\lambda=\infty$ is a regular singularity of $\widehat\nabla$. We would like to show that
\[
I_i(s,z)=\mathcal Z^K(E_i)(\widetilde b(s),z).
\]
The equality holds for $s=0$, and therefore to this end, it suffices to verify that $ \left(F^{*}\nabla \right)I_i=0$.

Since  $\widehat y_i$ is $\widehat\nabla$-flat, it follows that
\begin{align*}
     \Big(\partial_s - \sum_{j=1}^{N}\partial_s[u_j (\widetilde b(s))]\mathscr{C}_j(\widetilde b(s))(\lambda-\mathscr{U}(\widetilde b(s)))^{-1} \mu \Big) \widehat y_i(\widetilde b(s),\lambda)=0,  \\
     \left(\partial_\lambda+(\lambda-\mathscr{U}(\widetilde b(s)))^{-1}\mu \right) \widehat y_i(\widetilde b(s),\lambda)=0,
\end{align*}
and consequently,
\begin{align}\label{eq--slambda}
    \partial_s[\widehat y_i(\widetilde b(s),\lambda)]=-\sum_{j=1}^N\partial_s[u_j (\widetilde b(s))]\mathscr C_j(\widetilde b(s))\partial_\lambda[\widehat y_i(\widetilde b(s),\lambda)].
\end{align}
For $(F^*\nabla)_{\partial_s}I_{i}(s,z)=0$, note that $\partial_sI_i(s,z)$ is equal to $(i)+(ii)+(iii)+(iv)$, where
\begin{align*}
    (i)&=\frac{1}{z} \int_{\gamma_{s,i}}\partial_s[\widehat y_i(\widetilde b(s),\lambda)]
e^{-\frac{\lambda}{z}}{\mathrm d}\lambda,\\
(ii)&=\frac1z\int_0^\infty(\partial_\lambda\widehat{y}_i)(\widetilde b(s),\gamma_{s,i}(t))\cdot\partial_s\gamma_{s,i}(t)e^{-\frac{\gamma_{s,i}(t)}{z}}\partial_t\gamma_{s,i}(t)\,\mathrm{d} t,\\
(iii)&=\frac1z\int_0^\infty\widehat{y}_i(\widetilde b(s),\gamma_{s,i}(t))e^{-\frac{\gamma_{s,i}(t)}{z}}\left(-\frac{\partial_s\gamma_{s,i}(t)}{z}\right)\partial_t\gamma_{s,i}(t)\,\mathrm{d} t,\\
(iv)&=\frac1z\int_0^\infty\widehat{y}_i(\widetilde b(s),\gamma_{s,i}(t))e^{-\frac{\gamma_{s,i}(t)}{z}}\partial_s\partial_t\gamma_{s,i}(t)\,\mathrm{d} t,
\end{align*}
and we set $(v)=\frac1z\sum_{j=1}^N\partial_s(u_j(\widetilde b(s)))\mathscr C_j(\widetilde b(s))I_{i}(s,z)$. From \eqref{eq--slambda}, we get
\[
(i)=- \frac{1}{z}\sum_{j=1}^{N}\partial_s[u_j (\widetilde b(s))]\int_{\gamma_{s,i}}  
\mathscr{C}_j(\widetilde b(s))\partial_\lambda[\widehat y_i(\widetilde b(s),\lambda)]
e^{-\frac{\lambda}{z}}{\mathrm d}\lambda.
\]
Noting $\partial_\lambda[\widehat{y}_i(\widetilde b(s),\lambda)e^{-\frac{\lambda}{z}}]=\partial_\lambda[\widehat y_i(\widetilde b(s),\lambda)]
e^{-\frac{\lambda}{z}}-\frac1z\widehat{y}_i(\widetilde b(s),\lambda)e^{-\frac{\lambda}{z}}$, we see that 
\begin{align*}
   (i)+(v)=&-\frac1z\sum_{j=1}^N\partial_s[u_j(\widetilde b(s))]\int_{\gamma_{s,i}}\mathscr C_j(\widetilde b(s))\partial_\lambda[\widehat y_i(\widetilde b(s),\lambda)e^{-\frac{\lambda}{z}}]\,{\mathrm d}\lambda\\
    =&-\frac1z\sum_{j=1}^N\partial_s[u_j(\widetilde b(s))]\left(\mathscr C_j(\widetilde b(s))\widehat y_i(\widetilde b(s),\gamma_{s,i}(t))e^{-\frac{\gamma_{s,i}(t)}{z}}\right)\bigg|^{t=+\infty}_{t=0}
\end{align*}
Now the term for $t=+\infty$ vanishes because $\widehat y_i(\widetilde b(s),\lambda)$ has moderate growth for $\lambda$ near $\lambda=\infty$ and $\operatorname{Re}(\frac{e^{\mathbf{i}\phi_i}}{z})>0$. So from $\gamma_{s,i}(0)=u_i(\widetilde b(s))$, we obtain
\begin{align*}
(i)+(v)=&\frac1z\sum_{j=1}^N\partial_s[u_j(\widetilde b(s))]\mathscr C_j(\widetilde b(s))\Psi_i(\widetilde b(s))e^{-\frac{u_i(\widetilde b(s))}{z}}\\
=&\frac1z\partial_s[u_i(\widetilde b(s))]\mathscr C_i(\widetilde b(s))\Psi_i(\widetilde b(s))e^{-\frac{u_i(\widetilde b(s))}{z}}.
\end{align*}
For $(ii),(iii),(iv)$, note that the sum of their integrands is equal to
\[
\partial_t[\widehat y_i(\widetilde b(s),\gamma_{s,i}(t))e^{-\frac{\gamma_{s,i}(t)}{z}}\partial_s\gamma_{s,i}(t)],
\]
and we get
\begin{align*}
    (ii)+(iii)+(iv)&=\frac1z\left(\widehat y_i(\widetilde b(s),\gamma_{s,i}(t))e^{-\frac{\gamma_{s,i}(t)}{z}}\partial_s\gamma_{s,i}(t)\right)\Bigg|^{t=+\infty}_{t=0}\\
    &=-\frac1z\Psi_i(\widetilde b(s))e^{-\frac{u_i(\widetilde b(s))}{z}}\partial_s[u_i(\widetilde b(s))].
\end{align*}
Therefore $(F^*\nabla)_{\partial_s}I_i(s,z)=0$.
Similarly, we have $(F^*\nabla)_{\partial_z}I_i(s,z)=0$. This proves $(F^*\nabla)I_i=0$.

So we see that $I_i(s,z)=\mathcal Z^K(E_i)(\widetilde b(s),z)$. In particular, let $\mathbf w'=\widetilde b(1)$, and we have
\[
\mathcal Z^K(E_i)(\mathbf w',z)
=
\frac1z\int_{L(u_i(\mathbf w'),\phi_i)}
\widehat y_i(\mathbf w',\lambda)
e^{-\lambda/z}\,\mathrm{d}\lambda.
\]
By slightly varying the contour and applying Watson's lemma (see \cite[Lemma 6.2.2]{AF}), we see that there exist phases $\phi'_1>\dots>\phi'_N$ with $\phi'_1-\phi'_N<2\pi$, such that
\begin{align*} 
         \mathcal{Z}^K(E_i)(\mathbf w',z) e^{\frac{u_i(\mathbf{w}')}{z}} \rightarrow \Psi_{i}(\mathbf{w}'), ~\text{as }  z \rightarrow 0 ~\text{with } |\arg z -\phi'_i| < \frac{\pi}{2}+\epsilon,
     \end{align*}
and the half lines $L(u_i(\mathbf w'),\phi'_i)$ are pairwise disjoint. Now the conclusion of the theorem follows from \cite[Theorem 3.8]{HuKe}.
\end{proof}
\begin{remark}
The argument of ``moving the quantum parameter in $\mathcal B$" was also used by Iritani in \cite[Proof of Theorem 7.33]{Iri20} to establish Orlov-type decompositions of quantum $D$-modules in the toric setting.
\end{remark}

\section{Mirror symmetry for Milnor hypersurfaces} \label{MS_Mil}
In this section, we prove   mirror symmetry for  Milnor hypersurfaces at the level of  ring isomorphisms with respect to Givental's mirror Landau-Ginzburg models.

Throughout the rest of this paper, we fix integers $n, m$ with $n\geq m\ge 2$. The (smooth) \textbf{Milnor hypersurface} $H_{n,m}$, up to isomorphism,  is the   hypersurface in $\mathbb{P}^{n} \times \mathbb{P}^{m}$ of degree $(1,1)$ defined by  $$\varsigma_0\hat\varsigma_0+\varsigma_1\hat\varsigma_1+\cdots+\varsigma_{m}\hat\varsigma_{m}=0 $$
in homogeneous coordinates $([\varsigma_0:  \cdots : \varsigma_{n}], [\hat\varsigma_0: \cdots: \hat\varsigma_{m}])$.  
Denote $n_1:=n$ and $n_2:=m$. For $i\in \{1, 2\}$, let $p_i: H_{n,m} \rightarrow \mathbb{P}^{n_i}$ be the natural projection  to $\bbP^{n_i}$.
Note that $H_{n, m}$ is a Fano manifold
 with  first Chern class
\begin{equation}\label{c1eqn}
    c_1(T_{H_{n, m}})=nh_1+mh_2, \quad\mbox{where}\quad     h_i:=p_i^*(c_1(\mathcal{O}_{\bbP^{n_i}}(1))).
\end{equation}
Moreover, $\{h_1, h_2\}$ form a nef basis of $H^{2}(H_{n,m}, \bbZ)$, and 
 we have    ${\rm Eff}(H_{n, m})=\mathbb{Z}_{\geq 0}\beta_1+\mathbb{Z}_{\geq 0}\beta_2\subset H_2(H_{n, m}, \bbZ)$, where   
 $\int_{\beta_a}h_{b}=\delta_{a,b}$ for $1\leq a,b \leq 2.$

 A ring presentation of the small quantum $K$-theory of $H_{n, m}$ was obtained in \cite[Theorem 1.5]{KLS}, by studying    Givental's $K$-theoretic $J$-function of $H_{n, m}$. By using parallel cohomological level arguments, we obtain the following result, whose proof is given in   Appendix A for completeness.

 \begin{prop} \label{QH_Mil}
 The small quantum cohomology ring  
$ QH^{*}(H_{n,m})$ is canonically isomorphic to the quotient ring $\mathbb{C}[h_1,h_2,q_1,q_2]/(f_{1}^{q},f_{2}^{q})
$, where
\begin{align*}
  f_1^{q}(h_1,h_2,q_1,q_2)&=h_2^{m+1}-q_2(h_1+h_2),\\
  f_2^{q}(h_1,h_2,q_1,q_2)&=\sum_{k=0}^{m}(-1)^{k}h_1^{n-k}h_2^{k}-q_1-(-1)^{m}q_2 h_1^{n-m}.   
\end{align*}
\end{prop}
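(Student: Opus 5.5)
We plan to derive the presentation from Givental's mirror theorem for the toric complete intersection $H_{n,m}\subset \bbP^n\times\bbP^m$, following the $K$-theoretic argument of \cite[Theorem 1.5]{KLS} at the cohomological level. Since $H_{n,m}$ is Fano of index $\geq 2$ in every direction (by \eqref{c1eqn}, $\int_{\beta_1}c_1=n\geq 2$ and $\int_{\beta_2}c_1=m\geq 2$), the $I$-function
\[
I(q,z)=\sum_{d_1,d_2\ge0}q_1^{d_1}q_2^{d_2}\,
\frac{\prod_{k=1}^{d_1+d_2}(h_1+h_2+kz)}{\prod_{k=1}^{d_1}(h_1+kz)^{n+1}\prod_{k=1}^{d_2}(h_2+kz)^{m+1}}
\]
(restricted to $H_{n,m}$) satisfies $I=1+O(z^{-2})$. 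Hence there is no mirror map, and $I=J$ is the small $J$-function. The standard consequence is that every differential operator $P(z q\partial_q,q,z)$ annihilating $I$ yields, after setting $z=0$ and replacing $zq_a\partial_{q_a}$ by $h_a*$, a relation $P(h_1,h_2,q,0)=0$ in $QH^*(H_{n,m})$.

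First we produce the operators. Let $D_a=zq_a\partial_{q_a}$. The shift in $d_2$ gives
\[
D_2^{m+1}I=q_2\,(D_1+D_2+z)I,
\]
and its symbol is $h_2^{m+1}-q_2(h_1+h_2)=f_1^q$. The shift in $d_1$ gives $D_1^{n+1}I=q_1(D_1+D_2+z)I$, but this relation is of degree $n+1$, while $f_2^q$ has degree $n$. The classical relation in $H^*(H_{n,m})$ is $h_1^{n+1}=0$ combined with the divisibility $h_1^{n+1}=(h_1+h_2)\sum_k(-1)^kh_1^{n-k}h_2^k\pm h_2^{m+1}h_1^{n-m}$, so the degree-$n$ relation is $\sum_{k=0}^m(-1)^kh_1^{n-k}h_2^k=0$. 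This holds because $[H]=h_1+h_2$ and $h_2^{m+1}=0$, so $h_1^{n+1}\cdot[H]$-type cancellations apply. We therefore look for an operator of order $n$ by dividing out the factor $(D_1+D_2+z)$. Concretely, we consider the modified function $\tilde I$ with the factor $\prod(h_1+h_2+kz)$ removed in the top term, or equivalently we work with the quantum $D$-module of the ambient space restricted to $H$, where $(h_1+h_2)$ acts injectively on the relevant piece.

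The computation is done as follows. We write
\[
\sum_k(-1)^kD_1^{n-k}D_2^k(D_1+D_2)=D_1^{n+1}+(-1)^mD_1^{n-m}D_2^{m+1}
\]
with $z$-shifts tracked, and substitute the two shift equations. Reading off the coefficient of $(D_1+D_2+z)$, we expect to obtain
\[
(D_1+D_2+z)\Big[\sum_k(-1)^kD_1^{n-k}D_2^k-q_1-(-1)^mq_2D_1^{n-m}\Big]I\approx 0
\]
up to $z$-corrections. Injectivity is then needed to cancel the prefactor. This is the main obstacle: at the level of functions, the factor $(D_1+D_2+z)$ cannot literally be cancelled. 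The approach we would try first is to verify the order-$n$ relation directly on the coefficients of $I$, with $z$-corrections allowed, and then argue via the degree count ($\deg q_1=2n$, $\deg q_2=2m$) that the only possible quantum correction to the classical relation $\sum_k(-1)^kh_1^{n-k}h_2^k=0$ in degree $2n$ is $aq_1+bq_2h_1^{n-m}+\dots$ (note $q_2h_2^{n-m}$ terms vanish when $n-m>m$). The coefficients are then fixed by two- and three-point invariants of lines, which the $J$-function gives as the $z^{-1}$ coefficients.

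Finally, $\mathbb C[h_1,h_2,q]/(f_1^q,f_2^q)$ is free of rank $n(m+1)=\dim H^*(H_{n,m})$. This follows because $f_1^q,f_2^q$ is a regular sequence with leading terms $h_2^{m+1},h_1^n$ in a suitable graded order, so the quotient is graded free. Since the relations hold in quantum cohomology and specialize to the classical presentation at $q=0$, a Nakayama/graded argument gives the canonical isomorphism.
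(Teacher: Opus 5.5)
Your derivation of $f_1^q$ from the $d_2$-shift, the identification $I=J$, and the closing regular-sequence/Nakayama (Siebert--Tian) step all agree with the paper. The gap is that the relation $f_2^q$ is never established, and that relation is the core of the proposition.

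The order-$(n+1)$ operator only yields $(h_1+h_2)*f_2^q=0$ in $QH^*(H_{n,m})$, via the polynomial identity $(h_1+h_2)f_2^q=h_1^{n+1}-q_1(h_1+h_2)+(-1)^mh_1^{n-m}f_1^q$. Cancelling $h_1+h_2$ in the quantum ring requires it to be a non-zero-divisor there, and that is not available a priori. The paper deduces invertibility of $h_1+h_2$ only from the presentation (proof of \cref{MS}). For $n=m$ it is even a zero-divisor after specializing to $q_1=(-1)^{n+1}q_2$.

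Your fallback does not close the gap either. By degree, the quantum correction to $\sum_k(-1)^kh_1^{n-k}h_2^k$ has the form $aq_1+\sum_{k\ge1,\,km\le n}q_2^kc_k$, with $c_k\in H^{2(n-km)}(H_{n,m})$ a priori arbitrary. So all monomials $q_2h_1^{n-m-j}h_2^j$ with $j\le\min(m,n-m)$ may occur, and for $n\ge 2m$ also $q_2^2$-terms from the classes $2\beta_2$. You compute none of these coefficients, and they are not merely two- and three-point invariants of lines.

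The step that goes wrong is your claim that the factor cannot be cancelled at the level of functions. Coefficientwise it can, and this is the paper's argument (\cref{QDE}).
\begin{itemize}
\item After stripping $e^{\eta/z}$, $D_a$ multiplies the $q^d$-coefficient of $J$ by $h_a+d_az$. So you can apply $P(D_1,D_2)$, with $P(x_1,x_2)=\sum_{k=0}^m(-1)^kx_1^{n-k}x_2^k$, termwise.
\item For $d\neq0$, the numerator $\prod_{l=1}^{d_1+d_2}(h_1+h_2+lz)$ already contains the top factor $(h_1+d_1z)+(h_2+d_2z)$. The coefficient therefore becomes $W(h_1+d_1z,h_2+d_2z)\prod_{l=1}^{d_1+d_2-1}(h_1+h_2+lz)$ over the same denominator, where $W=(x_1+x_2)P=x_1^{n+1}+(-1)^mx_1^{n-m}x_2^{m+1}$.
\item The first summand of $W$ cancels $(h_1+d_1z)^{n+1}$ if $d_1\ge1$, and vanishes by $h_1^{n+1}=0$ if $d_1=0$.
\item The second summand cancels $(h_2+d_2z)^{m+1}$ if $d_2\ge1$, and vanishes by $h_2^{m+1}=0$ if $d_2=0$.
\item The $d=0$ term is $P(h_1,h_2)=0$.
\end{itemize}
Reindexing gives the exact identity $P(D_1,D_2)J=q_1J+(-1)^mq_2D_1^{n-m}J$, with no $z$-corrections, and its symbol is $f_2^q$. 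Equivalently, the exact prefactor in your identity is $D_1+D_2$. For $d\neq0$ it acts on the $q^d$-component by $h_1+h_2+(d_1+d_2)z$, which is invertible because $h_1+h_2$ is nilpotent.

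Finally, your argument for the classical relation $P(h_1,h_2)=0$ only shows $[H]\cdot P=0$ on $\mathbb P^n\times\mathbb P^m$. You also need injectivity of the Gysin map on $H^{2n}(H_{n,m})$, which follows from Lefschetz plus Poincar\'e duality, or you can simply use the projective-bundle presentation.
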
 

Following \cite{Giv98},  we consider the mirror Landau-Ginzburg model $(Z, f)$ 
for Fano hypersurfaces in a Fano toric manifold. In our case,    
 $$Z:=\big\{((x_i)_i,(y_j)_j, q_1, q_2)\in (\mathbb{C}^{\times})^{n+m}\times (\mathbb{C}^{\times})^2\mid\frac{q_1}{x_1\cdots x_{n}}+\frac{q_2}{y_1y_2\cdots y_{m}}=1\big\},$$ and the mirror superpotential $f: Z\to \mathbb{C}$ is defined by $$f:=x_1+x_2+\cdots+x_{n}+y_1+y_2+\cdots+y_{m}.$$ 
With the substitution $x_{n}=\frac{q_1(y_1\cdots y_{m})}{x_1 \cdots x_{n-1}(y_1\cdots y_{m}-q_2)}$, we can identify $Z$ with the complement of the hypersurface $\{y_1 y_2\cdots y_m-q_2=0\}$ in 
$(\mathbb{C}^{\times})^{n+m-1}\times (\mathbb{C}^{\times})^2$. 
Then the Jacobi ring ${\rm Jac}(f)$ of $f$ is defined by 
\begin{align*}  
    {\rm Jac}(f):= \mathbb{C}[x_{1}^{\pm 1},\cdots, x_{n-1}^{\pm 1},y_{1}^{\pm 1},\cdots, y_{m}^{\pm 1},q_1^{\pm 1},q_2^{\pm 1},(y_1\cdots y_{m}-q_2)^{-1}]\Big/ I,
\end{align*} 
 with the quotient ideal given by     $$I=\big(x_1 \frac{\partial f}{\partial x_1},\cdots,x_{n-1}\frac{\partial f}{\partial x_{n-1}},y_1 \frac{\partial f}{\partial y_1},\cdots,y_{m}\frac{\partial f}{\partial y_{m}}\big).$$

\begin{thm} \label{MS}
The map $(h_1, h_2, q_1, q_2)\mapsto ([x_1], [y_1], [q_1], [q_2])$ gives a ring isomorphism in both of the following cases.
  \begin{enumerate} 
    \item $\Phi: QH^{*}(H_{n,m})[q_1^{-1}, q_2^{-1}]\overset{\cong}{\longrightarrow} {\rm Jac}(f) \mbox{ if } n>m\geq 2;$
     \item $\Phi: QH^{*}(H_{n,m})[q_1^{-1}, q_2^{-1}, h_1^{-1}, h_2^{-1}]\overset{\cong}{\longrightarrow} {\rm Jac}(f) \mbox{ if } n=m\geq 2$.
\end{enumerate}  
 Furthermore, $\Phi(c_1(T_{H_{n, m}}))=[f]$.  
 
\end{thm}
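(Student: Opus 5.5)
We will prove \cref{MS} by a direct comparison of generators and relations, using the presentation of \cref{QH_Mil}.

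\textbf{Step 1: simplify the Jacobi ideal.} Write $Y:=y_1\cdots y_m$ and $x_n=\frac{q_1 Y}{x_1\cdots x_{n-1}(Y-q_2)}$. For $1\le i\le n-1$ we have $x_i\partial_{x_i}x_n=-x_n$, so the relation $x_i\partial_{x_i} f$ reads $x_i-x_n=0$. Hence in ${\rm Jac}(f)$ all $x_i$ coincide, and we write $x$ for the common class. For $y_j$,
\[
y_j\partial_{y_j}x_n=x_n\Big(1-\frac{Y}{Y-q_2}\Big)=-\frac{q_2x_n}{Y-q_2},
\]
so the relations are $y_j=\frac{q_2x}{Y-q_2}$ for $1\le j\le m$. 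Thus all $y_j$ coincide, and we write $y$ for the common class, so $Y=y^m$. The relation becomes $y(y^m-q_2)=q_2x$, that is,
\[
y^{m+1}=q_2(x+y),
\]
which is $f_1^q$ under $h_1\mapsto x$, $h_2\mapsto y$. The defining equation of $Z$ becomes
\[
\frac{q_1}{x^n}+\frac{q_2}{y^m}=1, \qquad\text{i.e.}\qquad x^n(y^m-q_2)=q_1y^m .
\]

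\textbf{Step 2: rewrite the second equation in the form $f_2^q$.} We reduce $x^n(y^m-q_2)-q_1y^m$ modulo the first relation. Using $q_2x=y^{m+1}-q_2y$ repeatedly, i.e. the telescoping identity
\[
(x+y)\sum_{k=0}^m(-1)^kx^{n-k}y^k=x^{n+1}+(-1)^mx^{n-m}y^{m+1},
\]
we multiply $f_2^q$ by $y^m$ and substitute $y^{m+1}=q_2(x+y)$. The expected outcome is that
\[
y^m f_2^q\equiv \pm\big(x^n(y^m-q_2)-q_1y^m\big) \pmod{f_1^q}.
\]
So, after inverting $y$ (and the other units), the ideals generated by the two relations agree. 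This identity is the main computational step, and we expect it to be a short algebraic check.

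\textbf{Step 3: handle units and invertibility.} In ${\rm Jac}(f)$ the elements $x$, $y$, $q_i$ and $y^m-q_2$ are invertible. On the quantum side, $QH^*[q_1^{-1},q_2^{-1}]$ is a free module of rank $\chi(H_{n,m})=n(m+1)$ over $\mathbb C[q^{\pm}]$. We need $h_1$, $h_2$ and $h_2^m-q_2$ to be units there.
\begin{itemize}
\item From $f_1^q$ we get $h_2^{m+1}=q_2(h_1+h_2)$, and $h_2(h_2^m-q_2)=q_2h_1$.
\item If $h_1$ is a unit, then $h_2$ and $h_2^m-q_2$ are units as well.
\item For $n>m$, $f_2^q$ gives $h_1^{n-m}\cdot(\cdots)=q_1$, so $h_1$ divides the unit $q_1$ and is therefore a unit.
\item For $n=m$ the exponent $n-m$ is zero, this divisibility argument fails, and so we must invert $h_1$ and $h_2$ explicitly, as in the statement.
\end{itemize}
With these units in place, the map $\mathbb C[h_1,h_2,q^{\pm}]\to{\rm Jac}(f)$ is surjective. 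Indeed, ${\rm Jac}(f)$ is generated by $x$, $y$, $q_i$, their inverses and $(y^m-q_2)^{-1}$, and all of these lie in the image. Its kernel is the ideal generated by the two relations (after localization), so $\Phi$ is an isomorphism.

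\textbf{Step 4: the superpotential.} Finally, $f\mapsto nx+my$ in ${\rm Jac}(f)$, since all $x_i$ equal $x$ and all $y_j$ equal $y$. This is $\Phi(nh_1+mh_2)=\Phi(c_1)$ by \eqref{c1eqn}.

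The main obstacle is the bookkeeping in Step 2 together with the unit argument of Step 3. In particular, one must confirm that no extra components appear where $y^m=q_2$; this is excluded because $y^m-q_2$ is a unit on both sides.
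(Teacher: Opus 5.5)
Your proposal is correct and follows essentially the paper's route: reduce the Jacobian relations to $f_1^q$ and $x^n(y^m-q_2)=q_1y^m$, rewrite the latter as $f_2^q$ modulo $f_1^q$, and check which elements are units (the congruence you left as ``expected'' does hold with the $+$ sign, by substituting $q_2x=y^{m+1}-q_2y$ into $q_2x^n$ exactly $m$ times, which uses $n\ge m$). Your unit argument, which gets $h_2$ and $h_2^m-q_2$ as units directly from $h_2(h_2^m-q_2)=q_2h_1$, is a slightly shorter alternative to the paper's identity $(h_1+h_2)u=h_1^{m+1}$.
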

\begin{proof}
By direct computation of  $x_i\frac{\partial f}{\partial x_i}$ and $y_j\frac{\partial f}{\partial y_j}$,  in ${\rm Jac}(f)$ we have   
 \begin{align*} 
    &x_1=x_2=\cdots=x_{n-1}=\frac{q_1(y_1\cdots y_{m})}{x_1 \cdots x_{n-1}(y_1\cdots y_{m}-q_2)}, \\
    &y_1=y_2=\cdots=y_{m}= \frac{q_1 q_2(y_1\cdots y_{m})}{x_1 \cdots x_{n-1}(y_1\cdots y_{m}-q_2)^{2}}=\frac{x_1 q_2}{ y_1\cdots y_{m}-q_2}.  
 \end{align*} 
\noindent It follows immediately that 
\begin{align*} 
    {\rm Jac}(f)&\cong \mathbb{C}[x_{1}^{\pm 1},y_{1}^{\pm1},q_1^{\pm1},q_2^{\pm1},(y_1^{m}-q_2)^{-1}]\big/\big(x_1-\frac{q_1 y_1^{m}}{x_1^{n-1}(y_1^{m}-q_2)},y_1-\frac{ x_1q_2 }{y_1^{m}-q_2}\big)\\
    &\cong \mathbb{C}[x_{1}^{\pm 1},y_{1}^{\pm1},q_1^{\pm1},q_2^{\pm1},(y_1^{m}-q_2)^{-1}]\big/\big(x_1^{n}\frac{y_1^{m}-q_2}{y_1^{m}}-q_1, y_1(y_1^{m}-q_2)- x_1q_2\big)\\
   &\cong \mathbb{C}[x_{1}^{\pm 1},y_{1}^{\pm1},q_1^{\pm1},q_2^{\pm1}]\big/\big(x_1^{n}\frac{y_1^{m}-q_2}{y_1^{m}}-q_1, y_1(y_1^{m}-q_2)- x_1q_2\big).
\end{align*}
The last isomorphism holds because $y_1^m-q_2$ is already a unit in the quotient ring on the right hand side, following from the second relation. 
 Repeatedly using  relation $y_1(y_1^{m}-q_2)=q_2 x_1$ in the quotient ring, we have 
\begin{align*}
  x_1^{n}\frac{y_1^{m}-q_2}{y_1^{m}}-q_1 
    =& x_1^{n}-x_1^{n-1}\frac{x_1 q_2}{y_1^{m}}-q_1 \\
    =&x_1^{n}-x_1^{n-1}\frac{y_1(y_1^{m}-q_2)}{y_1^{m}}-q_1 \\
    =&x_1^{n}-x_1^{n-1}y_1+ x_1^{n-2}\frac{x_1q_2}{y_1^{m-1}}-q_1 \\
    =&x_1^{n}-x_1^{n-1}y_1+ x_1^{n-2}y_1^{2}+\cdots +(-1)^{m} x_1^{n-m} y_1^{m} -(-1)^{m}x_{1}^{n-m} q_2-q_1.
\end{align*}
  Therefore 
 \begin{align*}
    {\rm Jac}(f)\cong \mathbb{C}[x_{1}^{\pm 1},y_{1}^{\pm1},q_1^{\pm1},q_2^{\pm1}]\big/\big(f_{1}^{q}(x_1,y_1,q_1,q_2),f_{2}^{q}(x_1,y_1,q_1,q_2)\big).
\end{align*}
Combining the above isomorphism  with Proposition \ref{QH_Mil}, we obtain
a ring isomorphism  
$QH^{*}(H_{n,m})[q_1^{-1}, q_2^{-1}, h_1^{-1}, h_2^{-1}]\overset{\cong}{\longrightarrow} {\rm Jac}(f)$ induced by $h_1\mapsto x_1, h_2\mapsto y_1$, $q_1\mapsto q_1$ and $q_2\mapsto q_2$. 
Consequently, 
this map sends $c_1(T_{H_{n,m}})=nh_1+mh_2$ to $[f]$, since all $[x_i]$ (resp. $[y_j]$) are the same in the ${\rm Jac}(f)$.

Assume $n>m$ now. Since we can write  $f_2^q(h_1, h_2, q_1, q_2)= h_1^{n-m}u-q_1$ and $q_1$ is a unit,
   $h_1$ and $u$ are both units in   $QH^{*}(H_{n,m})[q_1^{-1}, q_2^{-1}]$. In this quotient ring,  
$(h_1+h_2)u
=
h_1^{m+1}+(-1)^m h_2^{m+1}
-(-1)^m q_2(h_1+h_2)
=
h_1^{m+1}.$ 
This implies that
$h_1+h_2$ is a unit, and  so is $h_2$ by the relation $f_1^q$. Hence, the localizations $h_1^{-1}, h_2^{-1}$ are redundant when $n>m$. 
\end{proof}
Denote by $\mbox{Spec}(\hat c_1)=\{\{\lambda_1, \cdots, \lambda_N\}\}$ the multiset of eigenvalues of the linear operator $c_1(T_{H_{n,m}})*$ on $QH^*(H_{n, m})[q_1^{-1}, q_2^{-1}]$. For instance,  $\{\{1, 2, 2,2\}\}$ is read off as eigenvalue $1$ of multiplicity one and eigenvalue  $2$ of multiplicity three. Similarly, we denote by $\mbox{CV}(f)$  the multiset of the critical values of $f$. Denote by $\dot\cup$ the union of multisets.  
\begin{cor}\label{cor:eigenCV}
     Let $n\geq m\geq 2$ and $q_1, q_2\neq 0$. 
     If both $n=m$ and $q_1=(-1)^{n+1}q_2$ hold, then {\upshape $\mbox{Spec}(\hat c_1)=\mbox{CV}(f)\dot\cup \{\{0, \cdots, 0\}\}$}, where there are $n$ copies of $0$. Otherwise, {\upshape $\mbox{Spec}(\hat c_1)=\mbox{CV}(f)$}. 
\end{cor}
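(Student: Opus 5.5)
Write $N=\dim H^*(H_{n,m})=nm+n$ (the Poincar\'e polynomial of a $\mathbb P^{n-1}$-bundle over $\mathbb P^m$ gives $n(m+1)$). The plan is to compare both multisets through the explicit presentation $\mathbb{C}[h_1,h_2]/(f_1^q,f_2^q)$ of Proposition \ref{QH_Mil}, with $q_1,q_2$ specialized to fixed nonzero values. Since $QH^*$ is a free module of rank $N$, the quotient $A_q:=\mathbb{C}[h_1,h_2]/(f_1^q,f_2^q)$ is finite of dimension $N$. Hence $\mathrm{Spec}(\hat c_1)$ is the multiset $\{\{nh_1+mh_2 \text{ evaluated at } P\}\}$, where $P$ runs over the common zeros of $f_1^q,f_2^q$ in $\mathbb{C}^2$, each counted with its local intersection multiplicity. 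This uses that the eigenvalues of multiplication by $g$ on a finite algebra are the values of $g$ at the points of its spectrum, weighted by the local lengths. On the other side, the computation in the proof of Theorem \ref{MS} identifies $\mathrm{Jac}(f)$ with $\mathbb{C}[x_1^{\pm1},y_1^{\pm1}]/(f_1^q,f_2^q)$. The same computation shows that critical points of $f$ on $Z$ correspond bijectively, with multiplicities, to common zeros with $x_1y_1\neq0$, and that $f$ takes the value $nx_1+my_1$ there. So $\mathrm{CV}(f)$ is exactly the part of $\mathrm{Spec}(\hat c_1)$ coming from zeros in the torus $(\mathbb{C}^\times)^2$. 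The claim therefore reduces to describing the zeros of $f_1^q,f_2^q$ with $h_1h_2=0$.

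Next I will do the case analysis on the coordinate axes. Suppose $h_1=0$. Then $f_2^q$ reduces to $(-1)^m h_2^n\delta_{n,m}-q_1$, since only the $k=n$ term survives and it is present only when $n=m$; the term $q_2h_1^{n-m}$ survives only when $n=m$. If $n>m$ this gives $-q_1\neq0$, so there are no zeros. If $n=m$ we get $(-1)^n h_2^n=q_1+(-1)^n q_2$. In the same situation $f_1^q$ gives $h_2^{m+1}=q_2h_2$, that is, $h_2=0$ or $h_2^m=q_2$. Now $h_2=0$ forces $q_1=(-1)^{n+1}q_2$. If instead $h_2^n=q_2$, we need $(-1)^n q_2=q_1+(-1)^nq_2$, that is $q_1=0$, which is excluded. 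Suppose next $h_2=0$ with $h_1\neq0$. Then $f_1^q=-q_2h_1\neq0$, so there are no zeros. Consequently, outside the exceptional case there are no zeros off the torus, and $\mathrm{Spec}(\hat c_1)=\mathrm{CV}(f)$. In the exceptional case the only extra zero is the origin, where $c_1=0$.

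Finally I need the local multiplicity at $(0,0)$ when $n=m$ and $q_1=(-1)^{n+1}q_2$. I will show it equals $n$. In that case $f_1^q=-q_2h_1-q_2h_2+h_2^{n+1}$ is a smooth curve germ at the origin, and it can be parametrized as $h_1=-h_2+h_2^{n+1}/q_2$. Substituting into $f_2^q$, the constant terms cancel by the hypothesis, and we obtain $\sum_{k=0}^n(-1)^k h_1^{n-k}h_2^k+(-1)^{n+1}q_2(h_1+h_2)$. The last summand equals $(-1)^{n+1}h_2^{n+1}$ on the curve, so it has order $n+1$. With $h_1=-h_2+O(h_2^{n+1})$, the sum becomes $\sum_k(-1)^k(-1)^{n-k}h_2^n+O(h_2^{2n})=(-1)^n(n+1)h_2^n+\cdots$, so its leading term has order exactly $n$. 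Hence the intersection multiplicity is $n$, which gives $n$ copies of $0$.

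The step I expect to be the main obstacle is making the multiplicity correspondence rigorous. One must check that lengths of local rings agree for the Jacobian ring of $f$ on $Z$, after eliminating variables, and for the presentation with $x_1,y_1$. The isomorphisms in the proof of Theorem \ref{MS} are ring isomorphisms, so the local lengths match, but the elimination of the $x_i,y_j$ must be justified as an isomorphism of finite algebras, not merely as a bijection of points.
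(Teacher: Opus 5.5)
Your proposal is correct and follows essentially the paper's route. Like the paper, you use the ring isomorphism from the proof of Theorem~\ref{MS} to match $\mathrm{CV}(f)$ with the zeros of $(f_1^q,f_2^q)$ in $(\mathbb{C}^\times)^2$, and then inspect the axes $h_1h_2=0$. Two differences: you treat $n>m$ by the same axis check rather than the unit argument, and you actually verify the multiplicity $n$ at the origin, which the paper only asserts. One small slip: when $n=m$ and $q_1=(-1)^{n+1}q_2$ the constant terms cancel completely, so $f_2^q=\sum_{k=0}^n(-1)^kh_1^{n-k}h_2^k$ with no extra summand $(-1)^{n+1}q_2(h_1+h_2)$. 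Since that summand would vanish to order $n+1$ on the curve $f_1^q=0$ anyway, your conclusion is unaffected.
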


\begin{proof} For $q_1, q_2\neq 0$, we note that the critical values of $f$ are simply the eigenvalues of the linear operator $[f]$ on $\mbox{Jac}(f)$, and that 
   $\mbox{Spec}(\hat c_1)$ coincides with the multiset $\mbox{EV}(nh_1+mh_2)$ of the values of $nh_1+mh_2$ at the
   spectrum of  $\mathbb{C}[h_1, h_2]/(f_1^q, f_2^q)$ by \cref{QH_Mil}.
   Thus by \cref{MS}, we have $\mbox{CV}(f)=\mbox{Spec}([f])=\mbox{Spec}(\hat c_1)$, whenever either (a) $n>m$ or (b) $n=m$ and  the
   spectrum of $\mathbb{C}[h_1, h_2]/(f_1^q, f_2^q)$ contains no points in the variety $h_1h_2=0$.

 Assume $n=m$. If $h_2=0$,  the equations    $f_1^q=0$ and $f_2^q=0$ have solutions only if $q_1+(-1)^nq_2=0$; when this holds, we obtain the nonreduced point  $(0, 0)$ with multiplicity $n$.
  If $h_2\neq 0$ and $h_1=0$, $f_1^q- (-1)^nh_2f_2^q=(-1)^n q_1 h_2\neq 0$, so that there are no solutions of the equations $f_1^q=f_2^q=0$.  Hence,  $\mbox{Spec}(\hat c_1)\setminus \mbox{CV}(f)$ is the multiset of $0$ with multiplicity $n$,  if $q_1=(-1)^{n+1}q_2$, or the empty multiset otherwise.
\end{proof}

\begin{remark}\label{rmk:schu}
Via the    Pl\"ucker embedding, the two-step flag variety  $F\ell_{1,n;n+1}$ is a degree $(1, 1)$-hypersurface of 
 $\mathbb{P}^{n}\times \mathbb{P}^{n}$, defined by 
  $\sum_{j=0}^{n}(-1)^j \varsigma_j \hat\varsigma_{j}=0$. Hence it is isomorphic to $H_{n, n}$   by a simple coordinate change. Moreover, any smooth Schubert subvariety in $F\ell_{1,n;n+1}$ is either a Milnor hypersurface (where we allow $m=1$) or a product of projective spaces. 
  
  Note that $F\ell_{1,n;n+1}$ admits a toric degeneration, and so does $H_{n, m}$ by restriction. Hence, we can also construct a Landau-Ginzburg model of $H_{n, m}$, in analogy with the constructions in   \cite{Giv97, BCFKS, HKLS}. The resulting   superpotential is a Laurent polynomial defined on $(\mathbb{C}^\times)^{n+m-1}\times (\mathbb{C}^\times)^2$, extending $(Z, f)$, and  we expect that it can be further extended to the Lie theoretic Landau-Ginzburg model constructed in \cite{LRY}.
\end{remark}

\section{Gamma-I  for Milnor hypersurfaces}
The notion of property Gamma-I for general Fano manifolds  was introduced in \cite{HKLS2}, based on the initial study \cite{GHIKLS} of Gamma conjecture I over the (complexified) K\"ahler moduli space. In this section, we show that property Gamma-I holds for $H_{n, m}$ in $V_{\mathbb{R}}:=H^2(H_{n, m};\mathbb{R})$.
Consequently, Galkin-Golyshev-Iritani's  Gamma Conjecture I holds for  $H_{n, m}$.

 \subsection{Property Gamma-I} \label{Gamma I}
Let $X=H_{n, m}$. For $\eta\in H^2(X)$, we can write $\eta=t_1h_1+t_2h_2$ and $q_i=e^{t_i},i=1,2$. 
 Denote by $\hat c_1(\eta)$ the operator  in $\mathrm{End}(H^*(X))$ defined by $\alpha\mapsto (c_1(T_X)*_q \alpha)|_{(q_1, q_2)=(e^{t_1}, e^{t_2})}$.
\begin{defn} \label{cond (SR)}
 We say that $X$ satisfies condition ${\rm (SR)}$ at $\eta$, if   $\hat{c}_1(\eta)$ 
 has a simple  rightmost eigenvalue $u_{\eta}$, namely  for any other eigenvalue $u \in {\rm Spec}  ~\hat{c}_1(\eta)$, we have ${\rm Re}~ u < {\rm Re}~ u_\eta.$
 By the $(\mathrm{SR})$-region of $X$, we mean the subset  $\{\eta\in H^2(X) \mid X \mbox{ satisfies condition } (\mathrm{SR})\mbox{ at } \eta \}$ of $H^2(X)$.  
\end{defn}
  
Restricting to the small quantum cohomology, we have the (small) Dubrovin connection $\nabla^{\rm sm}$ on the trivial $H^*(X)$-bundle over $H^2(X)\times \bbP^1$, given by $\nabla^{\rm{sm}}=\mathrm{d}+\sum_{i=1}^{2}{1\over z}h_i*_q \mathrm{d}t_i-\left({1\over z} {c}_1(T_X)*_q-\mu\right){\mathrm{d}z\over z}.$ 
Similarly, we consider the  space $\mathcal{S}^{{\rm{sm}}}$ of $\widetilde{\nabla}^{\rm{sm}}$-flat sections of the trivial $H^*(X)$-bundle over $H^2(X)\times \widetilde{\mathbb{C}^\times}$.
 Suppose that $X$ satisfies condition $(\mathrm{SR})$  at $\eta\in H^2(X)$ with  simple  rightmost eigenvalue $u_{\eta}$ of $\hat{c}_1(\eta)$. Also consider  the   group homomorphism $\mathcal{Z}^{K, {\rm sm}}: K(X)\to \mathcal{S}^{\rm sm}$.  Denote by $\mathcal{A}(\eta)$   the $\bbC$-linear  space   of $\widetilde{\nabla}^{\rm{sm}}$-flat sections $y(\eta,z)$ such that $e^{{u_{\eta}\over z}}y(\eta,z)$ has moderate growth as $z\to0$ along the positive real axis (i.e. $\arg z=0$). Then $\dim _{\bbC}{\mathcal{A}}(\eta)=1$ by    \cite[Remark 3.1.9]{GGI}. 

\begin{defn}[Property Gamma-I] \label{PGammaI}
We say that \emph{$X$ satisfies Gamma-I at $\eta\in H^2(X)$}, if   $X$ satisfies condition $(\mathrm{SR})$ at $\eta$ and  $\mathcal{Z}^{K,{\rm{sm}}}(\cO_X)(\eta,z)\in\mathcal{A}(\eta)$.
\end{defn}
\begin{remark}
    The special case when $\emph{X}$ satisfies Gamma-I at $\eta=\mathbf{0}$ is a variant of Gamma conjecture I of Galkin-Golyshev-Iritani.    
\end{remark}

\subsection{ ${\rm (SR)}$-region of   $H_{n,m}$} \label{Gamma_I_Milhyp}
Recall  $n\geq m\geq 2$ and $V_{\mathbb{R}}=H^{2}(H_{n,m};\mathbb{R}).$

\begin{lemma}   \label{root_re}
Let $q_1,q_2\in \mathbb{R}_{>0}$. Then there is a real simple root $r_0$  of  
\begin{align*} 
    F(y):= q_2 y^{n-m} ( \frac{y^{m}}{q_2}-1)^{n+1}-q_1
\end{align*}
with $r_0>q_2^{\frac{1}{m}}$. Moreover, for any   root $z$ of $F(y)$, we have $|z| \leq r_0$, with equality   only if 
$z=r_0 e^{\mathbf{i} \theta}$ 
and $\theta\in {2\pi \over  {\rm g.c.d.}(n,m)}\mathbb{Z}$.
\end{lemma}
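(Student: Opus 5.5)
Write $F(y)=q_2y^{n-m}w(y)^{n+1}-q_1$ with $w(y):=\frac{y^m}{q_2}-1$. The plan is to compare $|F|$-type quantities on circles, using a monotone real function and the triangle inequality for $|w|$.

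\textbf{Existence and simplicity of $r_0$.} On $(q_2^{1/m},\infty)$ consider $G(y):=q_2y^{n-m}w(y)^{n+1}$. Both $y^{n-m}$ (with $n-m\ge0$) and $w(y)$ are positive and nondecreasing there, and $w$ is strictly increasing. Hence $G$ increases strictly from $0$ to $\infty$, and there is a unique $r_0>q_2^{1/m}$ with $G(r_0)=q_1$. For simplicity, $F'(r_0)=G'(r_0)>0$, because
$$G'(y)=G(y)\Big(\frac{n-m}{y}+\frac{(n+1)my^{m-1}}{q_2w(y)}\Big)>0.$$

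\textbf{The modulus bound.} Let $z$ be any root, so $|z|^{n-m}|w(z)|^{n+1}q_2=q_1$. Write $\rho=|z|$. The triangle inequality gives
$$|w(z)|=\Big|\frac{z^m}{q_2}-1\Big|\le\frac{\rho^m}{q_2}+1.$$
This alone does not compare with $w(\rho)=\rho^m/q_2-1$, so I need a sharper argument. Suppose $\rho>r_0$. Then $\rho^m/q_2-1>w(r_0)>0$, so $\rho^m/q_2>1$ and therefore $|w(z)|\ge\rho^m/q_2-1=w(\rho)$ by the reverse triangle inequality. It follows that
$$q_1=q_2\rho^{n-m}|w(z)|^{n+1}\ge G(\rho)>G(r_0)=q_1,$$
a contradiction. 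So every root satisfies $|z|\le r_0$.

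\textbf{The equality case.} Suppose $|z|=r_0$. Then the chain above forces $|w(z)|=w(r_0)$, that is, $\big|\frac{z^m}{q_2}-1\big|=\frac{r_0^m}{q_2}-1$. Equality in the reverse triangle inequality means $z^m/q_2$ is a positive real multiple of $1$, so $z^m=r_0^m$ and $w(z)=w(r_0)$ exactly. Substituting into $F(z)=0$ and dividing by $F(r_0)+q_1=q_1$ gives $z^{n-m}=r_0^{n-m}$.

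Writing $z=r_0e^{\mathbf i\theta}$, we get $m\theta\in2\pi\mathbb Z$ and $(n-m)\theta\in2\pi\mathbb Z$. Hence $n\theta\in 2\pi\mathbb Z$ as well, so $\theta\in\frac{2\pi}{\gcd(n,m)}\mathbb Z$. When $n=m$ the condition $z^0=r_0^0$ is vacuous, but $\gcd(n,m)=m$ and the condition $z^m=r_0^m$ already suffices.

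The only delicate point is the equality analysis. It needs $r_0^m/q_2>1$, which is exactly why $r_0>q_2^{1/m}$ is required: it makes the reverse triangle inequality meaningful and pins down the argument of $z^m$.
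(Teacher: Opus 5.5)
Your proof is correct and follows essentially the same route as the paper. Existence and simplicity of $r_0$ come from strict monotonicity of $F$ on $(q_2^{1/m},\infty)$; the reverse triangle inequality $|z^m/q_2-1|\ge |z|^m/q_2-1$, combined with that monotonicity, gives $|z|\le r_0$; and in the equality case $z^m\in\mathbb{R}_{>0}$ and then $z^{n-m}=r_0^{n-m}$. The only cosmetic differences are that you get monotonicity and $F'(r_0)>0$ from a logarithmic derivative rather than the paper's explicit factorization of $F'$, and you treat $|z|>r_0$ and $|z|=r_0$ separately where the paper handles $|z|\ge r_0$ in one step.
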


\begin{proof}
It follows from   the derivative of $F(y)$,
      $$F^{'}(y)=q_2y^{n-m-1} (\frac{y^{m}}{q_2}-1)^{n} (n(m+1)\frac{y^{m}}{q_2}-(n-m)), $$  that $F(y)$  is increasing on $(q_2^{\frac{1}{m}},+\infty)$. Since $F(q_2^{\frac{1}{m}})=-q_1<0$ and $F(+\infty)=+\infty$,   there exists a  real root $r_0$ of $F(y)$ with $r_0>q_2^{\frac{1}{m}}$. Since ${r_0^m\over q_2}>1$, we have $F'(r_0)>0$; consequently,  $r_0$ is a simple root.

Assume that there is a root $z$ of $F(y)$ with $|z|\geq  r_0$. Since   $|\frac{z^{m}}{q_2}-1| \geq \frac{|z|^{m}}{q_2}-1\geq  {r_0^m\over q_2}-1>0$,  we have
\begin{align} \label{inequ1}
    q_1 = q_2|z|^{n-m} |\frac{z^{m}}{q_2}-1|^{n+1} \geq q_2 |z|^{n-m} (\frac{|z|^{m}}{q_2}-1)^{n+1}=F(|z|)+q_1.
\end{align}
Thus $F(|z|)\leq 0=F(r_0)$, implying that $|z|=r_0$ and    the equality in \cref{inequ1} holds, namely $|\frac{z^{m}}{q_2}-1| = \frac{|z|^{m}}{q_2}-1.$
Consequently,   $z^{m}\in \mathbb{R}_{>0}$,  implying   $z=r_0e^{\mathbf{i} \theta }$ with $e^{\mathbf{i} m\theta }=1$. Hence, we have $0=F(r_0e^{\mathbf{i} \theta })=q_2 r_0^{n-m}e^{\mathbf{i} (n-m)\theta}({r_0^m\over q_2}-1)^{n+1}-q_1$.  Comparing it with $F(r_0)=0$, we obtain $e^{\mathbf{i} (n-m)\theta}=1$. Hence, $e^{\mathbf{i}\theta\cdot {\rm g.c.d.}(n, m)}=1$. 
\end{proof}

\begin{prop}  \label{cond_* for Milhyp}
 For any $\eta \in V_{\mathbb{R}}$,   $H_{n, m}$  satisfies condition $(\mbox{SR})$  at $\eta$. 
\end{prop}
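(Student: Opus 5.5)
Fix $\eta=t_1h_1+t_2h_2\in V_{\mathbb R}$, so $q_1=e^{t_1}$ and $q_2=e^{t_2}$ are positive reals. By \cref{cor:eigenCV}, apart from possible zero eigenvalues (which occur only when $n=m$ and $q_1=(-1)^{n+1}q_2$), $\mathrm{Spec}(\hat c_1(\eta))$ equals $\mathrm{CV}(f)$. By the proof of \cref{MS}, $\mathrm{CV}(f)$ is the multiset of values $nh_1+mh_2$ at the solutions of $f_1^q=f_2^q=0$ with $h_1h_2\neq0$, where $h_1=x_1$, $h_2=y_1$ and
\[ y(y^m-q_2)=q_2x,\qquad x^n\,\frac{y^m-q_2}{y^m}=q_1 . \]
I will eliminate $x$. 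The first equation gives $x=y(y^m/q_2-1)$. Substituting into the second gives $y^n(y^m/q_2-1)^n\cdot q_2(y^m/q_2-1)/y^m=q_1$, that is, $F(y)=0$ with $F$ as in \cref{root_re}. So the critical points correspond bijectively, counted with multiplicity, to the roots $y$ of $F$. The critical value at the point attached to $y$ is
\[ g(y):=n\,y\Bigl(\frac{y^m}{q_2}-1\Bigr)+m\,y=\frac{n}{q_2}y^{m+1}-(n-m)y. \]
A degree count is needed here: $\deg F=n+mn$, and the multiplicities should add up to $N=\dim H^*(H_{n,m})$. I will not need the exact count, only that each root of $F$ contributes its multiplicity as a root.

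The candidate rightmost eigenvalue is $u_\eta=g(r_0)$, where $r_0$ is the simple root from \cref{root_re}. It is real, and it is positive because $r_0^m>q_2$ gives $g(r_0)=r_0\bigl(n(r_0^m/q_2-1)+m\bigr)>0$. Since $r_0$ is a simple root of $F$, this eigenvalue is simple, provided no other root $z$ has $g(z)=g(r_0)$; the strict inequality below rules that out. If zero eigenvalues appear, they satisfy $0<u_\eta$, so they cause no trouble.

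The key estimate is that $\mathrm{Re}\,g(z)<g(r_0)$ for every root $z\neq r_0$ of $F$. I plan to avoid the triangle inequality on $g$, because the term $-(n-m)y$ has a sign. Instead I will use the factored form $g(z)=z\bigl(n(z^m/q_2-1)+m\bigr)$ together with $|z|\le r_0$ from \cref{root_re}. Set $w=z^m/q_2-1$. The relation $F(z)=0$ gives $|w|^{n+1}|z|^{n-m}q_2=q_1$, and from \cref{inequ1} we get $|w|\ge |z|^m/q_2-1$. An alternative is a direct bound: $\mathrm{Re}\,g(z)\le |z|\,|nw+m|\le |z|(n|w|+m)$. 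Here I want to show $|z|(n|w|+m)\le r_0(n(r_0^m/q_2-1)+m)$. Writing $s=|z|$, the equation gives $|w|=(q_1/(q_2 s^{n-m}))^{1/(n+1)}$, so the product behaves like $s^{1-(n-m)/(n+1)}$ times constants, plus $ms$. Since $n-m<n+1$, this is increasing in $s$. At $s=r_0$ with $w$ real positive it equals $g(r_0)$, so the bound is $\le g(r_0)$ for $s\le r_0$.

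For strictness, equality requires $s=r_0$, the point $z$ to lie on the positive real ray relative to the phase, and $nw+m$ to be aligned with $\bar z$. By the equality case of \cref{root_re}, $z=r_0e^{\mathbf i\theta}$ with $z^m$ real positive, so $w=r_0^m/q_2-1>0$ and $g(z)=e^{\mathbf i\theta}g(r_0)$. The real part equals $g(r_0)$ only if $\theta\in2\pi\mathbb Z$, that is, $z=r_0$. I expect the main obstacle to be making the monotonicity bound precise, in particular justifying that using $|w|$ from the equation rather than the lower bound is legitimate, and handling the multiplicity bookkeeping between roots of $F$ and eigenvalues, including the degenerate case $n=m$.
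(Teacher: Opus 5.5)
Your proposal is correct and follows essentially the same route as the paper. The paper also reduces to the values of $E(y)=\frac{n}{q_2}y^{m+1}-(n-m)y$ at the roots of $F$, bounds $|E(z)|\le n(q_1/q_2)^{1/(n+1)}|z|^{(m+1)/(n+1)}+m|z|$ (strictly increasing in $|z|$, with value $E(r_0)$ at $|z|=r_0$), and handles $|z|=r_0$ via the equality case of Lemma~\ref{root_re}, where $E(z)=e^{\mathbf i\theta}E(r_0)$. The step you flagged is not an obstacle: $|z^m/q_2-1|=(q_1/(q_2|z|^{n-m}))^{1/(n+1)}$ holds exactly because $F(z)=0$.

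The one minor slip is the claimed bijection between roots of $F$ and critical points. When $n=m$ and $q_1=(-1)^{n+1}q_2$, the point $y=0$ is a root of $F$ of multiplicity $n$ but is not a critical point. Since $E(0)=0$, this root accounts exactly for the extra zero eigenvalues, so your conclusion is unaffected.
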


\begin{proof}
   By Corollary \ref{cor:eigenCV}, 
it suffices to  analyze   the critical values of the superpotential $f$, which are given by the values of  $E(y)=n\frac{y^{m+1}}{q_2}+(m-n)y$ at the roots of $F(y)$. (Indeed, the only possibly missing eigenvalue  is $0$, which also satisfies $0<E(r_0)$.) 
Note that $q_i=e^{t_i}>0$ and take the real simple root $r_0 \in (q_2^{\frac{1}{m}},+\infty)$ of 
$F(y)$ given in \cref{root_re}, and let  $z$ also be a root of   $F(y)$.

By    Lemma \ref{root_re}, we have    $|z| \leq r_0$, and if  $|z|=r_0$ holds, then  $z=r_0e^{\mathbf{i} \theta}$  with $\theta\in {2\pi\over {\rm g.c.d.}(n, m)}\mathbb{Z}$. In this case,
\begin{equation}\label{eqn:propO2} E(z)=ne^{\mathbf{i} \theta} \frac{r_0^{m+1}}{q_2}+(m-n) r_0e^{\mathbf{i}\theta}=E(r_0)e^{\mathbf{i} \theta}.
  \end{equation}
  
  Now assume $0<|z|<r_0$. Since $r_0$ and $z$ are both   roots of   $F(y),$      we have $\frac{r_0^{m}}{q_2}-1=(\frac{q_1}{q_2})^{\frac{1}{n+1}}(r_0)^{-\frac{n-m}{n+1}}$ and $\frac{z^{m}}{q_2}-1=(\frac{q_1}{q_2})^{\frac{1}{n+1}}(z)^{-\frac{n-m}{n+1}}w$ with $w^{n+1}=1$.
          This implies $E(z)=z\big(n(\frac{z^{m}}{q_2}-1)+m\big)=z\big(n (\frac{q_1}{q_2})^{\frac{1}{n+1}}(z)^{-\frac{n-m}{n+1}}w +m\big).$ Hence,
          \begin{equation}\label{eqn:proO1}
               |E(z)|\leq  n(\frac{q_1}{q_2})^{\frac{1}{n+1}} |z|^{\frac{m+1}{n+1}} +m|z| < n (\frac{q_1}{q_2})^{\frac{1}{n+1}}r_0^{\frac{m+1}{n+1}} +m r_0= E(r_0),
          \end{equation}
       where  the last equality follows from  $ (\frac{q_1}{q_2})^{\frac{1}{n+1}}r_0^{\frac{m+1}{n+1}}=r_0(\frac{r_0^{m}}{q_2}-1)$. 
      
    Thus for any $z\neq r_0$, we have   ${\rm Re}~ E(z) < {\rm Re}~ E(r_0)$.        
\end{proof}

Note that the Fano index of $H_{n, m}$ equals ${\rm g.c.d.} (n, m)$. The following is a direct consequence of $\eqref{eqn:proO1}$ and $\eqref{eqn:propO2}$.
\begin{cor}\label{cor:conjO}
Property $\mathcal{O}$ holds for $H_{n, m}$. That is,  the spectral radius $\rho$ of $\hat c_1(\mathbf{0})$ is a simple eigenvalue, and if $u$ is an eigenvalue of $\hat c_1(\mathbf{0})$ of modulus $\rho$, then $({u\over \rho})^{\rm g.c.d.(n,m)}=1$.     
\end{cor}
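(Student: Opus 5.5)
The plan is to specialize the proof of \cref{cond_* for Milhyp} to $\eta=\mathbf 0$, i.e.\ $q_1=q_2=1$, and to keep track of multiplicities rather than only real parts. By \cref{cor:eigenCV}, $\mathrm{Spec}(\hat c_1(\mathbf 0))$ equals $\mathrm{CV}(f)$, possibly together with $n$ extra copies of $0$; the extra copies occur exactly when $n=m$ and $q_1=(-1)^{n+1}q_2$, which at $q_1=q_2=1$ means $n=m$ odd.

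\textbf{Step 1: reduce $\mathrm{Jac}(f)$ to one variable.} From the relation $y_1(y_1^m-q_2)=q_2x_1$ in the proof of \cref{MS}, we have $x_1=y_1\bigl(y_1^m/q_2-1\bigr)$. Substituting this into $x_1^n(y_1^m-q_2)/y_1^m=q_1$ gives exactly $F(y_1)=0$, with $F$ as in \cref{root_re}. Hence $\mathrm{Jac}(f)\cong\mathbb C[y^{\pm1}]/(F(y))$. In this model the class $[f]=nx_1+my_1$ acts as multiplication by $E(y)=n\,y^{m+1}/q_2+(m-n)y$. Therefore $\mathrm{CV}(f)$, counted with multiplicity, is the multiset $\{\{E(z)\}\}$, where $z$ runs over the nonzero roots of $F$ counted with their multiplicities as roots.

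\textbf{Step 2: locate the dominant eigenvalue.} Let $r_0>q_2^{1/m}=1$ be the simple real root from \cref{root_re}. Then $E(r_0)=r_0\bigl(n(r_0^m-1)+m\bigr)>0$. For any root $z$ of $F$ with $0<|z|<r_0$, inequality \eqref{eqn:proO1} gives $|E(z)|<E(r_0)$. For any root with $|z|=r_0$, \cref{root_re} gives $z=r_0e^{\mathbf i\theta}$ with $\theta\in\frac{2\pi}{\mathrm{g.c.d.}(n,m)}\mathbb Z$, and then \eqref{eqn:propO2} gives $E(z)=E(r_0)e^{\mathbf i\theta}$. Any extra zero eigenvalues from \cref{cor:eigenCV} have modulus $0<E(r_0)$. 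Consequently the spectral radius is $\rho=E(r_0)$.

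\textbf{Step 3: simplicity and roots of unity.} This is the main point needing care. The value $E(r_0)$ is attained only at $z=r_0$: a root with $|z|<r_0$ has $|E(z)|<\rho$, and a root $z=r_0e^{\mathbf i\theta}$ has $E(z)=\rho$ only when $e^{\mathbf i\theta}=1$, i.e.\ $z=r_0$. Since $r_0$ is a simple root of $F$, the eigenvalue $\rho$ has multiplicity one. Finally, every eigenvalue $u$ with $|u|=\rho$ must come from a root with $|z|=r_0$, so $u=\rho e^{\mathbf i\theta}$ with $\theta\in\frac{2\pi}{\mathrm{g.c.d.}(n,m)}\mathbb Z$. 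Hence $(u/\rho)^{\mathrm{g.c.d.}(n,m)}=1$, and $\mathrm{g.c.d.}(n,m)$ is the Fano index by \eqref{c1eqn}.
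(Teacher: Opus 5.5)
Your proposal is correct and follows the paper's argument: the paper derives the corollary directly from the estimates \eqref{eqn:proO1} and \eqref{eqn:propO2} in the proof of \cref{cond_* for Milhyp}, specialized to $q_1=q_2=1$, which is what your Steps 2--3 do. Your identification $\mathrm{Jac}(f)\cong\mathbb{C}[y^{\pm1}]/(F(y))$ makes explicit the multiplicity bookkeeping that the paper leaves implicit: simplicity of $\rho$ comes from simplicity of the root $r_0$, and the extra zero eigenvalues when $n=m$ is odd are harmless.
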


\subsection{Gamma-I for $H_{n, m}$}  \label{Gamma_I_Milhyp2}
Recall that  $(Z,f)$ is Givental's Landau-Ginzburg model  for $H_{n,m}$. Denote by  $Z_{\eta}$ the fiber of $Z\to (\mathbb{C}^\times)^2$ at $(q_1, q_2)=(e^{t_1}, e^{t_2})$, where  $\eta=t_1h_1+t_2h_2 \in H^{2}(H_{n,m})$.  Denote $Z_{\rm  real,\eta}:=Z_\eta\cap (\mathbb{R}_{>0}^{n+m}\times \mathbb{R}_{>0}^2)$.
  In  \cite[Theorem 7.5]{GaIr}, Galkin and Iritani showed that 
the (original) Gamma conjecture I holds for $H_{n, m}$, provided that Condition \ref{condition} holds at $\eta=0$. 
  \begin{cond}{\rm  \cite[Condition 7.2]{GaIr}}  \label{condition}
       Let $\eta\in V_{\mathbb{R}}$. We have: 
   
      (a) $Z_{\eta}$ is a smooth hypersurface in $(\mathbb{C}^{\times})^{n+m}$;

  (b) $f|_{Z_{\rm  real,\eta}}: Z_{\rm  real,\eta} \rightarrow \mathbb{R}$ attains the global minimum at a unique critical point $x_{\rm con}\in Z_{\rm  real,\eta}$, which is called the conifold point;

  (c) the conifold point $x_{\rm con}$ is a non-degenerate critical point of $f$;

  (d) the spectral radius of   $c_1(T_{H_{n,m}})\ast_{\eta } $   coincides with $T_{\rm con}:=f(x_{\rm con})$;

  (e) $T_{\rm con}$ is an eigenvalue of $c_1(T_{H_{n,m}})\ast_{\eta} $ with multiplicity one.
  \end{cond}

\begin{lemma} \label{conifold}
    For $\eta \in V_{\mathbb{R}}$, parts (b) and (c) of Condition \ref{condition} hold for $H_{n, m}$.
\end{lemma}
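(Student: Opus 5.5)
The idea is to work on the positive real locus and reduce everything to a one-variable problem. Fix $\eta\in V_{\mathbb R}$, so that $q_1=e^{t_1}>0$ and $q_2=e^{t_2}>0$. A point of $Z_{{\rm real},\eta}$ has all $x_i,y_j>0$ and satisfies $\frac{q_1}{x_1\cdots x_n}+\frac{q_2}{y_1\cdots y_m}=1$. Both summands are positive, so $y_1\cdots y_m>q_2$.

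\textbf{Step 1: reduction to the diagonal.} Fix the products $P=x_1\cdots x_n$ and $Q=y_1\cdots y_m$. By AM--GM, $\sum x_i\ge nP^{1/n}$ and $\sum y_j\ge mQ^{1/m}$, with equality exactly on the diagonal $x_i=x$, $y_j=y$. Hence any minimizer lies on the diagonal. There $P=x^n$ and $Q=y^m$, and the constraint gives
$$x^n=\frac{q_1y^m}{y^m-q_2}.$$
So the problem becomes minimizing
$$g(y)=n\Bigl(\frac{q_1y^m}{y^m-q_2}\Bigr)^{1/n}+my, \qquad y\in(q_2^{1/m},\infty).$$
Since $g\to+\infty$ at both endpoints, a global minimum exists.

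\textbf{Step 2: the one-variable function.} I will show that $g$ has a unique critical point on this interval and that it is the point $r_0$ of \cref{root_re}. Computing $g'(y)=0$ and using the diagonal relation $y(y^m-q_2)=q_2x$ gives $x=y(y^m/q_2-1)$. Substituting into $x^n(y^m-q_2)=q_1y^m$ yields exactly $F(y)=0$. By \cref{root_re}, $F$ is increasing on $(q_2^{1/m},\infty)$, so $F$ has the unique root $r_0$ there. One must check that every critical point of $g$ in the interval is a root of $F$; this holds because the two relations are equivalent once $y^m>q_2$ and $x>0$.

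It follows that the minimizer $x_{\rm con}$ is unique: the AM--GM equality case forces the diagonal, and $g$ has a unique critical point. The same computation, combined with the Jacobian relations in the proof of \cref{MS}, identifies $x_{\rm con}$ as a critical point of $f$ on $Z_\eta$, not just of $f|_{Z_{\rm real,\eta}}$. Indeed, a real critical point of the restriction to the real locus is a critical point of the holomorphic function, since $Z_{{\rm real},\eta}$ is a totally real slice of full dimension. This proves (b).

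\textbf{Step 3: nondegeneracy, the main obstacle.} Use the coordinates $(x_1,\dots,x_{n-1},y_1,\dots,y_m)$ on $Z_\eta$ and switch to log coordinates $x_i=e^{a_i}$, $y_j=e^{b_j}$. In these, $f$ restricted to the real locus is a function on $\mathbb R^{n+m-1}$. The Hessian of a holomorphic function at a real critical point equals the complexification of the real Hessian. It therefore suffices to show the real Hessian is positive definite.

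For this, split $f$ as a sum of two pieces:
\begin{itemize}
\item $\sum_{i<n}e^{a_i}+\sum_j e^{b_j}$, which is strictly convex;
\item $x_n=\bigl(q_1e^{\sum b}\bigr)\big/\bigl(e^{\sum a}(e^{\sum b}-q_2)\bigr)$.
\end{itemize}
For the second piece, $\log x_n=\log q_1-\sum a_i+\sum b_j-\log(e^{\sum b_j}-q_2)$. The function $-\log(e^{s}-q_2)$ is convex in $s$ on $s>\log q_2$: its second derivative is $q_2e^s/(e^s-q_2)^2>0$. So $\log x_n$ is convex, hence $x_n=\exp(\log x_n)$ is convex. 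A strictly convex function plus a convex function is strictly convex, so the real Hessian is positive definite and $x_{\rm con}$ is nondegenerate. This proves (c).

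I expect the main difficulty to be bookkeeping: being careful that the real-versus-holomorphic Hessian comparison is valid in these coordinates. The convexity argument also independently re-proves the uniqueness in (b).
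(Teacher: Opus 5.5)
Your proof is correct, and it takes a different route from the paper in both parts.

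\textbf{Part (b).} The paper applies AM--GM only to $x_1,\dots,x_{n-1}$ and to the $y_j$, keeping $x_n$ as the eliminated variable. It therefore minimizes a two-variable function $g(c,d)$ with $c=x_1\cdots x_{n-1}$ and $d=y_1\cdots y_m$. Solving $\partial_c g=\partial_d g=0$ gives $c_0^{1/(n-1)}q_2=(d_0-q_2)d_0^{1/m}$ and $F(d_0^{1/m})=0$, so $d_0^{1/m}=r_0$.

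You instead use the fact that the constraint depends only on the full product $x_1\cdots x_n$, and apply AM--GM to all $n$ of the $x_i$. This leaves a one-variable problem with $g'(y)=m-\frac{mq_2x}{y(y^m-q_2)}$. So the relation $q_2x=y(y^m-q_2)$ that you call the ``diagonal relation'' is precisely the equation $g'(y)=0$, not an a priori property of the diagonal. Apart from that phrasing, the reduction is slightly shorter than the paper's. It also makes existence and uniqueness of the minimizer, and its identification with $r_0$, transparent.

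\textbf{Part (c).} The paper writes out the logarithmic Hessian in block form. It then shows $\det\neq 0$ via Sherman--Morrison and a Schur complement. Your log-convexity argument avoids this computation and gives more. The real Hessian at $x_{\rm con}$ is positive definite, not merely nonsingular. Moreover, strict convexity on the convex domain $\{\sum_j b_j>\log q_2\}$ shows $x_{\rm con}$ is the \emph{only} critical point of $f$ on $Z_{{\rm real},\eta}$, which covers the stronger reading of (b). The paper's route needs nothing beyond explicit formulas.

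\textbf{A small wording fix.} Strict convexity alone does not imply a positive definite Hessian. What you actually use is the following. The separable part has Hessian $\mathrm{diag}(e^{a_1},\dots,e^{a_{n-1}},e^{b_1},\dots,e^{b_m})$, which is positive definite. The term $x_n=\exp(\log x_n)$ is convex and $C^2$, so its Hessian is positive semidefinite. Their sum is therefore positive definite, and your passage to the holomorphic Hessian at this real critical point then goes through as stated.
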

\begin{proof}
      By the inequality of arithmetic and geometric means, we have 
     \begin{align*}
         f|_{Z_{\rm  real,\eta}}=&x_1+x_2+\cdots+x_{n-1}+x_n+y_1+y_2+\cdots+y_{m} \\
         \geq & (n-1)(x_1 \cdots x_{n-1})^{\frac{1}{n-1}}+x_n+m(y_1\cdots y_{m})^{\frac{1}{m}},
     \end{align*}
    with equality   if and only if   $x_1=x_2=\cdots=x_{n-1}$ and $y_1=y_2=\cdots=y_m$ hold.
    By the definition of $Z_{\rm  real,\eta}$, we have  $x_n=\frac{q_1 y_1\cdots y_{m}}{x_1 \cdots x_{n-1}(y_1\cdots y_{m}-q_2)}>0$, implying $y_1\cdots y_{m}>q_2$. 
   Consider the function $g:\mathbb{R}_{>0}\times (q_2, +\infty)\to \mathbb{R}_{>0}$, 
   $$g(c,d):=(n-1)c^{\frac{1}{n-1}}+ \frac{q_1d}{c(d-q_2)}+m d^{\frac{1}{m}} {\rm ~s.t.~} c>0, ~d>q_2. $$
    Clearly,  $g(c,d) \rightarrow +\infty$ whenever $c$ or $d$ tends to infinity or $(c, d)$ approaches the boundary of the defining domain. Thus $g(c,d)$ attains the global minimum at some $(c_0,d_0)$ with   $\frac{\partial g}{\partial c}(c_0,d_0)=0,\frac{\partial g}{\partial d}(c_0,d_0)=0$.  It follows that  $c_0^{\frac{1}{n-1}} q_2=(d_0-q_2)d_0^{\frac{1}{m}}$ and 
    $  q_2 (d_0^{\frac{1}{m}})^{n-m}(\frac{(d_0^{\frac{1}{m}})^{m}}{q_2} -1)^{n+1}-q_1=0.$
    In particular,  $d_0^{\frac{1}{m}}$ is a real root of  $F(y)$ and $d_0^{\frac{1}{m}}> q_2^{1\over m}$. Thus   $d_0^{\frac{1}{m}}$ equals    $r_0$ in Lemma  \ref{root_re}.
   Consequently, $f|_{Z_{\rm  real,\eta}}$ attains the global minimum at the unique  $$x_{\rm con}=(c_0^{\frac{1}{n-1}},c_0^{\frac{1}{n-1}},\cdots,c_0^{\frac{1}{n-1}}, \frac{q_1 d_0}{c_0(d_0-q_2)}, d_0^{\frac{1}{m}},d_0^{\frac{1}{m}},\cdots,d_0^{\frac{1}{m}}, q_1, q_2) \in Z_{\rm real,\eta},$$ 
which is a critical point of $f$.

    To show that $x_{\rm con}$ is   nondegenerate, we  consider the (logarithmic) Hessian matrix 
  \begin{align*}
    {\rm Hess}=
        \begin{pmatrix}
          A & D \\
          D^{t} & C
   \end{pmatrix},
  \end{align*}
where $A=(\frac{\partial^{2} f}{ \partial \log x_i\partial \log x_j})$, $D=(\frac{\partial^{2} f}{ \partial \log x_i\partial \log y_j})$, $C=(\frac{\partial^{2} f}{ \partial \log y_i\partial \log y_j})$ are matrices of sizes  $(n-1)\times (n-1), (n-1)\times m, m \times m $, respectively.  Denote $c:=x_1x_2 \cdots x_{n-1}$ and $d:=y_1 y_2\cdots y_m$.    By direct computation, we have
\begin{align*}
    A=&{\rm diag}(x_1,\cdots,x_{n-1})+\frac{q_1d}{c(d-q_2)} \vec{u} (\vec{u})^{t} ;  \\
    D=&\frac{q_1q_2d}{c(d-q_2)^{2}} \vec{u} (\vec{v})^{t}; \\
    C=&{\rm diag}(y_1,\cdots,y_{m})+\frac{q_1 q_2 (d^2+q_2 d)}{c(d-q_2)^{3}}  \vec{v} (\vec{v})^{t},
\end{align*}
 where $\vec{u}$ (resp. $\vec{v}$) is an all‑ones column vector of dimension 
 $(n-1)$ (resp. $m$).  Denote $k_1:=\frac{q_1 d}{c(d-q_2)}$ and $k_2:=\frac{q_1q_2d}{c(d-q_2)^{2}}$.   By the Sherman–Morrison formula in linear algebra, $A$ is invertible if and only if 
 \begin{align} \label{invert_A}
     1+k_1 (\vec{u})^{t} ~{\rm diag}(x_1^{-1},\cdots,x_{n-1}^{-1}) \vec{u} \neq 0;
 \end{align}
 
this nonvanishing condition obviously holds at $x_{\rm con}$. Thus  $\det(A) \neq 0$ at $x_{\rm con}$, and consequently we have
$\det ({\rm Hess})=\det(A)\det(C-D^{t}A^{-1}D),$ where 
\begin{align*}
    C-D^{t}A^{-1}D={\rm diag}(y_1,\cdots,y_{m})+\Big(\frac{q_1 q_2 (d^2+q_2 d)}{c(d-q_2)^{3}}-\frac{ k_2^{2}(\sum_{i=1}^{n-1}x_i^{-1})}{1+k_1(\sum_{i=1}^{n-1}x_i^{-1})}\Big)  \vec{v} (\vec{v})^{t}.
\end{align*}
 It  follows from
 $$d \cdot \Big(1+(\sum_{j=1}^{m} y_j^{-1} ) \frac{q_1q_2(d^{2}+q_2d)c(d-q_2)+q_1^{2}q_2d^{3}(\sum_{i=1}^{n-1}x_i^{-1})}{c(d-q_2)^{3}(c(d-q_2)+q_1d(\sum_{i=1}^{n-1}x_i^{-1}))}\Big)>0$$ that 
  $\det (C-D^{t}A^{-1}D ) \neq 0 $ at   $x_{\rm con}$. Hence, $\det ({\rm Hess})\neq 0$ 
  at  $x_{\rm con}$.
\end{proof}

\begin{thm} \label{Gamma-I}
 For any $\eta \in V_{\mathbb{R}}$, $H_{n,m}$ satisfies property Gamma-I at $\eta$. Consequently, Gamma conjecture I holds for $H_{n, m}$.
 \end{thm}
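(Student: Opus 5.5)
The plan is to verify Condition \ref{condition} at every $\eta\in V_{\mathbb R}$ and then run the mirror-symmetric argument of Galkin--Iritani \cite[Theorem 7.5]{GaIr} at an arbitrary real $\eta$ rather than only at $\eta=0$. Parts (b) and (c) are \cref{conifold}. For part (a): $Z_\eta$ is the zero locus of $g=\frac{q_1}{x_1\cdots x_n}+\frac{q_2}{y_1\cdots y_m}-1$. Along $g=0$ the logarithmic differential is $d\log$-combination $-\frac{q_1}{x_1\cdots x_n}\sum_i d\log x_i-\frac{q_2}{y_1\cdots y_m}\sum_j d\log y_j$. This vanishes only if both monomials vanish, which is impossible since $q_1,q_2\neq0$. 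Hence $Z_\eta$ is smooth.

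For (d) and (e), I will use \cref{cor:eigenCV} together with the proof of \cref{cond_* for Milhyp}. In the proof of \cref{conifold} the conifold point has $y$-coordinate $d_0^{1/m}=r_0$, and then $T_{\rm con}=f(x_{\rm con})=E(r_0)$, which I will check by substituting the critical-point relations. The estimates \eqref{eqn:proO1} and \eqref{eqn:propO2} show that every other eigenvalue $u$ satisfies $|u|\le E(r_0)$. The possible extra zeros in the case $n=m$ are harmless. So the spectral radius equals $T_{\rm con}$. Simplicity follows because $r_0$ is a simple root of $F$ (\cref{root_re}), the Jacobi-ring description identifies critical points with roots of $F$, and the critical point is nondegenerate by (c). Other roots may give the same modulus, namely $E(r_0)e^{\mathbf i\theta}$ with $\theta\ne0$, but not the same value.

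Next, I will invoke Iritani's integral representation \cite{Iri11}: for $\eta$ real, $\mathcal Z^{K,\rm sm}(\mathcal O_X)(\eta,z)$ equals, up to the standard normalization and the identification of the mirror map (trivial here, as the hypersurface is Fano of index $\geq 2$ in each factor, or at worst an explicit shift absorbed as in \cite{GaIr}), an oscillatory integral $\int_{\Gamma_{\mathbb R}} e^{-f/z}\,\omega$ over the positive real locus $Z_{{\rm real},\eta}$ for $z>0$. By (b) and (c), Laplace's method shows that $e^{T_{\rm con}/z}\mathcal Z^{K,\rm sm}(\mathcal O_X)(\eta,z)$ has moderate growth as $z\to+0$. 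Combined with (SR) at $\eta$ from \cref{cond_* for Milhyp}, where $u_\eta=T_{\rm con}$, this places the section in $\mathcal A(\eta)$. This is exactly the argument of \cite[Theorem 7.5]{GaIr} with $\eta$ as a parameter. Specializing to $\eta=0$ together with \cref{cor:conjO} then gives the original Gamma conjecture I.

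I expect the main obstacle to be making the integral representation valid at every real $\eta$, not just near the large radius limit. Iritani's theorem identifies the flat section with the oscillatory integral near $q\to0$. Both sides are flat sections of the small quantum $D$-module over the simply connected real slice $V_{\mathbb R}$, extended holomorphically to a neighborhood. I therefore plan to continue the identity analytically along $V_{\mathbb R}$. On the mirror side this requires that the real cycle $Z_{{\rm real},\eta}$ vary continuously with $\eta$ and that the integral converge uniformly. Both should follow from properness of $f$ on $Z_{{\rm real},\eta}$, as in the proof of \cref{conifold}, where $g\to\infty$ at the boundary. Alternatively, it would suffice to establish Gamma-I at one point and use the result of \cite{HKLS2} that Gamma-I propagates through the connected (SR)-region, which here is all of $V_{\mathbb R}$.
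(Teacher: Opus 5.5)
Your verification of Condition \ref{condition}, your use of \cite{Iri11}, and your deduction of the original Gamma conjecture I from $\eta=0$ via \cref{cor:conjO} all follow the paper. Your smoothness check for (a) and the identification $T_{\rm con}=E(r_0)$ are correct, and your plan to extend Iritani's identity along $V_{\mathbb R}$ by analytic continuation usefully supplements the paper's bare citation. The gap is in the Laplace-method step. The oscillatory integral is a scalar. What \cite{Iri11} gives is
\[
(2\pi z)^{\frac{M}{2}}\big(1,\mathcal Z^{K,{\rm sm}}(\mathcal O)(\eta,z)\big)=\int_{Z_{{\rm real},\eta}}e^{-\frac{f}{z}}\,\omega_{Z,\eta},
\]
that is, only the component of the flat section paired with $1$, not the $H^*(H_{n,m})$-valued section itself. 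So Laplace's method, as you apply it, shows moderate growth of $e^{T_{\rm con}/z}(1,\mathcal Z^{K,{\rm sm}}(\mathcal O))$ only. Membership in $\mathcal A(\eta)$ requires moderate growth of $(\alpha,e^{T_{\rm con}/z}\mathcal Z^{K,{\rm sm}}(\mathcal O))$ for every $\alpha\in H^*(H_{n,m})$, and your argument says nothing about the other components.

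The missing step is to differentiate the identity in $t_1,t_2$. By flatness, and up to sign conventions, applying $z\partial_{t_{i_1}}\cdots z\partial_{t_{i_l}}$ to the left-hand side gives the pairing of $\mathcal Z^{K,{\rm sm}}(\mathcal O)$ with $\nabla_{z\partial_{t_{i_1}}}\cdots\nabla_{z\partial_{t_{i_l}}}1$. On the mirror side it gives $\int_{Z_{{\rm real},\eta}}\varphi\,e^{-f/z}\omega_{Z,\eta}$ with $\varphi$ polynomial in the coordinates and $z$. Since $H^*(H_{n,m})$ is generated by $h_1,h_2$ (equivalently, the restriction $H^*(\mathbb P^n\times\mathbb P^m)\to H^*(H_{n,m})$ is surjective, which the paper notes before invoking \cite[Theorem 7.5]{GaIr}), the classes $\nabla_{z\partial_{t_{i_1}}}\cdots\nabla_{z\partial_{t_{i_l}}}1=h_{i_1}*\cdots*h_{i_l}+O(z)$ span $H^*(H_{n,m})$, with coefficients polynomial in $z$ and analytic in $\eta$. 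Hence every component has the form \eqref{int_O} with $\varphi$ of polynomial growth. Only then does Laplace's method, using (b), (c) and properness of $f$ on $Z_{{\rm real},\eta}$, give moderate growth of the whole section. This is where generation of the cohomology by divisors enters, and your proposal never uses that fact.
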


\begin{proof}
We first verify that Condition \ref{condition} holds for $H_{n, m}$. Indeed, 
part (a) follows immediately from the definition of $Z_{\eta}$.
Parts (b) and (c) hold by \cref{conifold}. Parts (d) and (e) follow from  \cref{cor:eigenCV}
and (the proof of) \cref{cond_* for Milhyp}.

 Note that the   morphism  $H^{*}(\mathbb{P}^{n} \times \mathbb{P}^{m})\to H^{*}(H_{n,m})$ induced from the natural inclusion of $H_{n, m}$ is surjective, and that property $\mathcal{O}$ holds for $H_{n, m}$ by \cref{cor:conjO}. The argument for the rest is the same as that   for the case $\mathbf q=\mathbf 1$ given in \cite[Theorem 7.5]{GaIr}. Here we include it for completeness.  
 
Let $g_1=\frac{q_1}{x_1\cdots x_{n}}+\frac{q_2}{y_1y_2\cdots y_{m}}$, and  $\omega_{Z,\eta}=\frac{\frac{d x_1}{x_1} \wedge \cdots \wedge \frac{d x_n}{x_n} \wedge \frac{d y_1}{y_1} \wedge \cdots \wedge \frac{d y_m}{y_m}}{dg_{1}}$  denote the holomorphic volume form of $Z_{\eta}$,   defined  by taking the residue form on $g_1=1$ associated with fixed $\eta \in V_{\mathbb{R}}$.
 In \cite{Iri11}, it is proved that   
$$ (2\pi z)^{\frac{M}{2}} (1, \mathcal{Z}^{K, {\rm sm}}(\mathcal{O})(\eta,z))=\int_{Z_{{\rm real},\eta}} e^{-\frac{f}{z}} \omega_{Z,\eta},\quad \mbox{ where } M=\dim_{\mathbb{C}} H_{n,m}. ~~  $$

By differentiating this equality in the $\eta$-direction, (suppressing signs for simplicity,)  we obtain 
 $$ (2\pi z)^{\frac{M }{2}} (\nabla_{z\frac{\partial}{\partial t_{i_1}}} \cdots \nabla_{z\frac{\partial}{\partial t_{i_l}}} 1, \mathcal{Z}^{K, {\rm sm}}(\mathcal{O})(\eta,z))=z \frac{\partial}{\partial t_{i_1}} \cdots  z \frac{\partial}{\partial t_{i_l}} (\int_{Z_{{\rm real},\eta}} e^{-\frac{f}{z}} \omega_{Z,\eta}),  $$
 where $i_{j} \in \{1,2\}$ for $1\leq j \leq l$.
 Since $H^{*}(H_{n,m})$ is generated by $H^{2}(H_{n,m})$, we obtain the formula
 \begin{align} \label{int_O}
     (2\pi z)^{\frac{M}{2}} (\alpha, \mathcal{Z}^{K, {\rm sm}}(\mathcal{O})(\eta,z))=\int_{Z_{{\rm real},\eta}}  \varphi(x,y,z) e^{-\frac{f}{z}} \omega_{Z,\eta},
 \end{align}
for every $\alpha \in H^{*}(H_{n,m})$ associated with some function $\varphi(x,y,z)$. For $\eta \in V_{\mathbb{R}}$, the polynomial growth of $\varphi$, together with the equality \ref{int_O} and {\rm Condition  \ref{condition}}, implies that  $|| e^{\frac{T_{\rm con}}{z} } \mathcal{Z}^{K, {\rm sm}}(\mathcal{O})(\eta,z)||$   has moderate growth as $z\to0$ along the positive real axis.
\end{proof}

\section{Gamma conjecture II for Milnor hypersurfaces}  
In this section, we use a strategy similar to that in   \cite{HKLS2} to verify Gamma conjecture II for $H_{n, m}$. This involves property Gamma-I, Iritani's   {Galois action} and  the asymptotic behavior of eigenvalues of $c_1(T_{H_{n,m}}) \ast_{q}$.

\subsection{Basic properties}
Recall that $\mathcal{S}$ is the space of $\widetilde\nabla$-flat  sections.
\begin{prop}[{}\!\!\protect{\cite[Section 2.2]{Iri}}]
    For $\xi\in H^{2}(X,\mathbb{Z})$, the \textbf{Galois action} $G^\mathcal S(\xi)$ on $\mathcal S$ is defined by 
    $$G^{\mathcal{S}}(\xi): \mathcal{S} \longrightarrow \mathcal{S}, s(\tau,z) \mapsto s(\tau+2\pi\mathbf i\xi,z). $$
 The Galois action   preserves the lattice $\mathcal{S}_{\mathbb{Z}}$ and corresponds to  tensoring by the line bundle $L_{\xi}^{\vee}$ in $K(X):$
 \begin{align} \label{Galois_ac}
     \mathcal{Z}^{K}(L_{\xi}^{\vee} \otimes E)=G^{\mathcal{S}}(\xi)( \mathcal{Z}^{K}(E)).
 \end{align}\end{prop}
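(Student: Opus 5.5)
The plan is to reduce everything to an explicit transformation rule for the fundamental solution $L(\tau,z)$ under the shift $\eta\mapsto\eta+2\pi\mathbf i\xi$. I would work on the universal cover, or equivalently regard $\tau$ as living in $H^*(X)$ with the quantum product depending on the $H^2$-part only through $e^{\int_\beta\eta}$. Throughout, $L_\xi$ is the line bundle with $c_1(L_\xi)=\xi$, so that ${\rm ch}(L_\xi^\vee)=e^{-\xi}$.

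\emph{Step 1: $G^{\mathcal S}(\xi)$ preserves $\mathcal S$.} By the divisor axiom, the structure constants of $*_\tau$ depend on the $H^2$-component $\eta$ only through $e^{\int_\beta\eta}$, and these factors are invariant under $\eta\mapsto\eta+2\pi\mathbf i\xi$ because $\int_\beta\xi\in\mathbb Z$. The Euler field $E$ is affine in $\tau$. However, its $H^2$-part $c_1(T_X)$ plus the degree-$2$ terms contributes to $E*_\tau$ only through $*_\tau$ and the constant $c_1$; the $t_k$ with $\deg\xi_k=2$ carry the coefficient $0$. Hence the connection matrices of $\nabla$ are invariant under the translation, and pulling back a flat section gives a flat section.

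\emph{Step 2: transformation of $\mathcal Z$.} In the formula for $L(\tau,z)$, the shift multiplies $e^{\int_\beta\eta}$ by $1$ and replaces $e^{-\eta/z}$ by $e^{-\eta/z}e^{-2\pi\mathbf i\xi/z}$. Since cup products commute, this gives
\[
L(\tau+2\pi\mathbf i\xi,z)=L(\tau,z)\,e^{-2\pi\mathbf i\xi\cup/z}.
\]
Next I use the grading identity. Because $\xi\cup$ raises $\mu$-weight by $1$, we have $z^{\mu}(\xi\cup)z^{-\mu}=z\,(\xi\cup)$, and hence
\[
e^{-2\pi\mathbf i\xi/z}z^{-\mu}=z^{-\mu}e^{-2\pi\mathbf i\xi}.
\]
The operator $e^{-2\pi\mathbf i\xi\cup}$ commutes with $z^{c_1(X)\cup}$. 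Therefore $\mathcal Z(\alpha)(\tau+2\pi\mathbf i\xi,z)=\mathcal Z(e^{-2\pi\mathbf i\xi}\cup\alpha)(\tau,z)$.

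\emph{Step 3: comparison with $K$-theory.} Write $\mathrm{Ch}(E):=\sum_p(2\pi\mathbf i)^p{\rm ch}_p(E)$. This modified Chern character is multiplicative, and $\mathrm{Ch}(L_\xi^\vee)=e^{-2\pi\mathbf i\xi}$. It follows that
\[
e^{-2\pi\mathbf i\xi}\cup\widehat\Gamma_X\cup\mathrm{Ch}(E)=\widehat\Gamma_X\cup\mathrm{Ch}(L_\xi^\vee\otimes E).
\]
Applying Step 2 yields \eqref{Galois_ac}. Preservation of the lattice $\mathcal S_{\mathbb Z}$ is then immediate, since $L_\xi^\vee\otimes-$ is an automorphism of $K(X)$.

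\emph{Expected main obstacle.} The main point requiring care is the sign and normalization bookkeeping in Step 2: the direction of the conjugation by $z^{\mu}$ and the convention for $e^{-\eta/z}$ in $L$. The convergence of $L$ and the divisor-axiom periodicity are standard.
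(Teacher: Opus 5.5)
Your argument is correct and is essentially the standard proof in Iritani's Section 2.2, which the paper cites rather than reproves (specialized here to manifolds, where there are no twisted sectors). The divisor axiom gives $L(\tau+2\pi\mathbf{i}\xi,z)=L(\tau,z)\,e^{-2\pi\mathbf{i}\xi\cup/z}$, the grading relation $z^{\mu}(\xi\cup)z^{-\mu}=z\,(\xi\cup)$ turns this factor into $e^{-2\pi\mathbf{i}\xi}$ after moving it past $z^{-\mu}$, and the identity $\sum_p(2\pi\mathbf{i})^p\mathrm{ch}_p(L_\xi^\vee)=e^{-2\pi\mathbf{i}\xi}$ together with multiplicativity finishes it, with signs consistent with the paper's convention that $\tau\mapsto\tau+2\pi\mathbf{i}\xi$ corresponds to $L_\xi^\vee$.
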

\noindent Here $L_{\xi}$ is  the line bundle with $c_1(L_{\xi})=\xi$.  The above important property is also called \textbf{monodromy-invariance}.

 Recall the natural projections $p_1: H_{n,m} \rightarrow \mathbb{P}^{n}$, $p_2:H_{n,m} \rightarrow \mathbb{P}^{m}$, and note that   $H_{n,m}$  is   a 
  $\mathbb{P}^{n-1}$-bundle over $\mathbb{P}^m$.  Recall $N=n(m+1)=\dim_\bbC H^*(H_{n,m}).$ 
By  \cite[Corollary 2.7]{Orl}, we have 
\begin{prop}[Orlov]\label{prop:FEC}
   For $1\leq t\leq N$, we write the Euclidean division $t-1=(m+1)a+b$ and then define $$F_t=\mathcal{O}(a-n+1, b-m):=p_1^{*}\mathcal{O}_{\mathbb{P}^{n}}(a-n+1)\otimes p_2^*\mathcal{O}_{\mathbb{P}^{m}}(b-m).$$ Then 
   $(F_1, F_2, \cdots, F_N)$ is a full exceptional collection in $\mathcal{D}^{b}(H_{n, m})$. 
\end{prop}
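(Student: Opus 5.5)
This follows from Orlov's projective bundle formula \cite[Corollary 2.7]{Orl} applied to the $\mathbb P^{n-1}$-bundle $p_2: H_{n,m}\to\mathbb P^m$, combined with Beilinson's collections on the base and on the fibres. The plan is to realize $H_{n,m}$ as the projectivization of a rank-$n$ vector bundle on $\mathbb P^m$ and to identify the relative $\mathcal O(1)$ with $p_1^*\mathcal O_{\mathbb P^n}(1)$. Write the defining equation as $\sum_{j=0}^m \varsigma_j\hat\varsigma_j=0$. For a fixed point $[\hat\varsigma]\in\mathbb P^m$, the fibre is the hyperplane in $\mathbb P^n$ cut out by the linear form $(\hat\varsigma_0,\dots,\hat\varsigma_m,0,\dots,0)$, which is nonzero. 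Hence $H_{n,m}=\mathbb P(\mathcal V)\subset \mathbb P^m\times\mathbb P^n$, where $\mathcal V$ is the kernel of the surjection $\mathcal O^{n+1}\to\mathcal O_{\mathbb P^m}(1)$ given by $(\hat\varsigma_0,\dots,\hat\varsigma_m,0,\dots,0)$. This $\mathcal V$ is a rank-$n$ vector bundle, namely $\Omega_{\mathbb P^m}(1)\oplus\mathcal O^{n-m}$. Under this identification, the tautological $\mathcal O_{\mathbb P(\mathcal V)}(1)$ is the restriction of $\mathcal O_{\mathbb P^n}(1)$, i.e.\ $p_1^*\mathcal O_{\mathbb P^n}(1)$.

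Orlov's theorem then gives a semiorthogonal decomposition
\[
\mathcal D^b(H_{n,m})=\big\langle p_2^*\mathcal D^b(\mathbb P^m)\otimes\mathcal O(k),\ p_2^*\mathcal D^b(\mathbb P^m)\otimes\mathcal O(k+1),\ \dots,\ p_2^*\mathcal D^b(\mathbb P^m)\otimes\mathcal O(k+n-1)\big\rangle
\]
for any integer $k$, where $\mathcal O(j):=p_1^*\mathcal O_{\mathbb P^n}(j)$. Each block $p_2^*\mathcal D^b(\mathbb P^m)\otimes \mathcal O(j)$ is equivalent to $\mathcal D^b(\mathbb P^m)$ via the fully faithful functor $p_2^*(-)\otimes\mathcal O(j)$. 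Inside each block we place Beilinson's full exceptional collection $(\mathcal O(-m),\dots,\mathcal O)$, shifted to $(\mathcal O_{\mathbb P^m}(b-m))_{b=0}^m$. Concatenating the blocks in order, with $k=-n+1$ so that $a-n+1$ runs over $-n+1,\dots,0$ for $a=0,\dots,n-1$, produces exactly $F_t=\mathcal O(a-n+1,b-m)$ with $t-1=(m+1)a+b$. Here $a$ indexes the Orlov block and $b$ the Beilinson index inside the block. The concatenation of full exceptional collections of the components of a semiorthogonal decomposition is a full exceptional collection, which proves the claim. As a consistency check, the length is $n(m+1)=N$.

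The only step needing care is the identification of $\mathcal O_{\mathbb P(\mathcal V)}(1)$ with $p_1^*\mathcal O(1)$, together with the ordering and sign conventions in Orlov's statement. Different references order the blocks as $\mathcal O(0),\dots,\mathcal O(n-1)$ or as $\mathcal O(-n+1),\dots,\mathcal O(0)$. Both orders are valid, since tensoring the whole decomposition by the line bundle $\mathcal O(-n+1)$ is an autoequivalence, so the given indexing is obtained either way. One can also check exceptionality directly as a sanity check. By the Künneth formula on $\mathbb P^n\times\mathbb P^m$ and the Koszul resolution $0\to\mathcal O(-1,-1)\to\mathcal O\to\mathcal O_{H}\to0$, the groups $\mathrm{Ext}^\bullet(F_s,F_t)$ reduce to cohomology of $\mathcal O(\alpha,\beta)$ and $\mathcal O(\alpha-1,\beta-1)$ on the product. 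For $s>t$ we have $\alpha\in[-n+1,0]$ and $\beta\in[-m,m]$, with either $\alpha<0$, or $\alpha=0$ and $\beta<0$. In that range these cohomology groups vanish, or cancel between the two terms of the resolution.
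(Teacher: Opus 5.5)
Your proposal is correct and takes the same route as the paper, which simply cites Orlov's projective bundle formula \cite[Corollary 2.7]{Orl} for the $\mathbb P^{n-1}$-bundle $p_2$. Your identification $H_{n,m}\cong\mathbb P(\Omega_{\mathbb P^m}(1)\oplus\mathcal O^{n-m})$ with relative $\mathcal O(1)=p_1^*\mathcal O_{\mathbb P^n}(1)$, together with Beilinson's collection placed in each block, just fills in the details behind that citation and confirms the ordering. In your optional Koszul check, every term actually vanishes outright in the range you describe, so no cancellation is needed.
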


The next proposition is a special case of  \cite[Corollary 5.9]{Iri07}, which works for Fano manifolds with $H^*(X)$ generated by $H^2(X)$.
\begin{prop}\label{prop:convergeH}
   $QH_{\rm big}(H_{n, m})$  {converges near the large radius limit}.  
 \end{prop}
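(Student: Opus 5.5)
The statement to prove is Proposition~\ref{prop:convergeH}: $QH_{\rm big}(H_{n,m})$ converges near the large radius limit. The plan is to reduce it to the general convergence result \cite[Corollary 5.9]{Iri07}. That result says: if $X$ is a Fano manifold (more generally, one whose small quantum cohomology converges) and $H^*(X)$ is generated by $H^2(X)$ as a ring, then convergence of the small quantum product near $q=0$ implies convergence of the big quantum product. The latter means absolute convergence of the genus-zero potential $\Phi(\tau)$ in a neighbourhood of the large radius limit, which is exactly the required statement. So it suffices to check two hypotheses for $X=H_{n,m}$: generation of cohomology by divisors, and convergence of the small quantum cohomology.

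\emph{Generation by $H^2$.} $H_{n,m}$ is a $\mathbb P^{n-1}$-bundle over $\mathbb P^m$ via $p_2$; it is the projectivization of a rank-$n$ subbundle of the trivial $\mathcal O^{n+1}$ cut out by the linear form. By the Leray--Hirsch theorem, $H^*(H_{n,m})$ is generated over $H^*(\mathbb P^m)=\mathbb C[h_2]/(h_2^{m+1})$ by powers of the relative hyperplane class. That class is $h_1$, since $\mathcal O_{\mathbb P}(1)$ is the pullback of $\mathcal O_{\mathbb P^n}(1)$ under $p_1$. Hence $H^*(H_{n,m})$ is generated by $h_1$ and $h_2$. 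Equivalently, restriction $H^*(\mathbb P^n\times\mathbb P^m)\to H^*(H_{n,m})$ is surjective, as already used in the proof of Theorem~\ref{Gamma-I}. Setting $q=0$ in Proposition~\ref{QH_Mil} gives the classical presentation and confirms the rank $N=n(m+1)$.

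\emph{Small quantum cohomology is polynomial.} Since $H_{n,m}$ is Fano, the dimension constraint $\deg=\dim X+k-3+\int_\beta c_1$ allows only finitely many effective $\beta$ to contribute to each structure constant of $*_q$. The structure constants are therefore polynomials in $q_1,q_2$, as is also explicit from Proposition~\ref{QH_Mil}. In particular the small product converges for all $q$.

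Iritani's argument then reconstructs the full potential: the Kontsevich--Manin/Dubrovin reconstruction recursion, controlled by the divisor axiom and the fact that $H^2$ generates, gives explicit bounds $|\langle\cdots\rangle_{0,k,\beta}|\le C^{k+|\beta|}k!$-type estimates. These bounds give a positive radius of convergence in $t_0,q_1,q_2,t_3,\dots$.

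The only real obstacle is verifying the hypotheses in the exact form Iritani requires. In particular, his convergence is stated in terms of $e^{t_i}$ for the $H^2$ directions and in ordinary coordinates otherwise, and it uses a nef basis; this matches the definition in Section~\ref{quanh} with the nef basis $\{h_1,h_2\}$. The argument then concludes by citation.
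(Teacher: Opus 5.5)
Your proposal is correct and is essentially the paper's argument: the paper simply invokes \cite[Corollary 5.9]{Iri07} for Fano manifolds whose cohomology ring is generated by $H^2$. Your Leray--Hirsch check that $H^*(H_{n,m})$ is generated by $h_1,h_2$, together with the polynomiality of the small product in the Fano case, supplies exactly the hypotheses that citation needs; your sketch of Iritani's internal estimates is not needed.
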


Again note that  $H_{n,m}$ is a 
  $\mathbb{P}^{n-1}$-bundle over $\mathbb{P}^m$.   The next proposition is a special case of    \cite[Corollary 1.8]{IrKo}. 
\begin{prop}\label{cor: ssH}
    $QH_{\rm big}(H_{n, m})$  is semisimple.
\end{prop}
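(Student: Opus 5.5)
The statement is stated as a special case of \cite[Corollary 1.8]{IrKo}, so the plan is to check that the hypotheses of that corollary hold for the $\mathbb{P}^{n-1}$-bundle $p_2\colon H_{n,m}\to\mathbb{P}^m$. I would first make the bundle structure explicit. For a point $[\hat\varsigma]\in\mathbb{P}^m$, the fiber of $p_2$ is the hyperplane $\{\sum_{j=0}^m\varsigma_j\hat\varsigma_j=0\}\subset\mathbb{P}^n$. Hence $H_{n,m}=\mathbb{P}(V)$, where $V$ is the kernel of the surjection $\mathcal{O}_{\mathbb{P}^m}^{\oplus(n+1)}\to\mathcal{O}_{\mathbb{P}^m}(1)$. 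This $V$ is a rank-$n$ vector bundle, namely $\Omega_{\mathbb{P}^m}(1)\oplus\mathcal{O}^{\oplus(n-m)}$. The result of Iritani--Koto identifies the quantum $D$-module of $\mathbb{P}(V)$, after formal decomposition over a suitable extended parameter space, with $n$ copies of the quantum $D$-module of the base. Their Corollary 1.8 then states that the big quantum cohomology of $\mathbb{P}(V)$ is semisimple whenever that of the base is semisimple. The base $\mathbb{P}^m$ has semisimple small quantum cohomology, since $h^{m+1}=q$ has $m+1$ distinct roots for $q\neq0$. Therefore $QH_{\rm big}(H_{n,m})$ is generically semisimple.

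I would also check that this semisimplicity is meaningful on the convergence domain $B$ given by \cref{prop:convergeH}, and not only formally. For this I would give an independent check through the small quantum ring. By \cref{QH_Mil} and \cref{cor:eigenCV}, it suffices to find one $(q_1,q_2)$ at which $\mathbb{C}[h_1,h_2]/(f_1^q,f_2^q)$ is reduced of length $N=n(m+1)$. Equivalently, $f$ should have $N$ nondegenerate critical points, which happens exactly when the $N$ solutions are distinct.

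Near $q_2\to0$ the solutions split into $n$ clusters. In the first equation $h_2^{m+1}=q_2(h_1+h_2)$, so $h_2$ is small. The second equation then reduces to $h_1^n\approx q_1$, which gives $n$ distinct values of $h_1$. For each such $h_1$, we get $h_2\approx(q_2h_1)^{1/(m+1)}$, which gives $m+1$ distinct values. So for small $q_2\neq0$ and fixed $q_1\neq0$ there are $N$ distinct simple solutions. By \cref{cor:eigenCV}, generic parameters avoid the exceptional case $n=m$ with $q_1=(-1)^{n+1}q_2$.

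Once the small quantum ring is semisimple at some point $\eta$ in the convergence region, semisimplicity is an open condition, so it propagates to $B_{ss}\neq\varnothing$.

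The main obstacle is the step of making the cluster analysis rigorous. I would handle it by an implicit function argument in the rescaled variable $h_2=q_2^{1/(m+1)}w$, which shows that the limiting system is nondegenerate. This route also bypasses relying solely on the citation to \cite{IrKo}.
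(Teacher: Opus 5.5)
Your proposal is correct. Its first paragraph is exactly the paper's argument: the paper proves this proposition only by citing \cite[Corollary 1.8]{IrKo} for the $\mathbb{P}^{n-1}$-bundle $p_2\colon H_{n,m}\to\mathbb{P}^m$. Your identification $H_{n,m}=\mathbb{P}(\Omega_{\mathbb{P}^m}(1)\oplus\mathcal{O}^{\oplus(n-m)})$, together with the semisimplicity of $QH^*(\mathbb{P}^m)$, just makes the hypotheses of that corollary explicit. Your formal-versus-convergent worry is harmless here. By \cref{prop:convergeH}, the discriminant of the trace form of $*_\tau$ is a convergent function on $B$. Formal generic semisimplicity says this function is not identically zero, so it is nonzero at some point of $B$.

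Your second route is genuinely different and more elementary. Set $\sigma=q_2^{1/(m+1)}$ and $h_2=\sigma w$. The relations of \cref{QH_Mil} then become $w^{m+1}-h_1-\sigma w=0$ and $h_1^n-q_1+O(\sigma)=0$. At $\sigma=0$ these have $N$ distinct solutions, with Jacobian determinant $-n(m+1)h_1^{n-1}w^m\neq 0$. The implicit function theorem therefore gives $N$ distinct points for small $\sigma\neq 0$.

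To close this argument, you should state that $\mathbb{C}[h_1,h_2]/(f_1^q,f_2^q)$ at each fixed $(q_1,q_2)$ has dimension exactly $N$. This follows from freeness of $QH^*$ over $\mathbb{C}[q_1,q_2]$ and right exactness of specialization. It is what turns ``$N$ distinct points'' into ``reduced, hence semisimple''. You also need to take $|q_1|<\delta$ so that the point lies in $B$. The detour through $f$, \cref{cor:eigenCV}, and openness is unnecessary, since the definition asks for a single semisimple point.

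What this route buys is a self-contained proof of a stronger statement: the small quantum cohomology is already semisimple near $q_2=0$, using only Appendix A. In fact, the paper's own \cref{eigenv} (pairwise distinct eigenvalues of $\hat c_1(\tau_0)$) already implies this, so the citation could have been avoided. The Iritani--Koto route is instead conceptual and applies to any projective bundle over a base with semisimple quantum cohomology. Its cost is invoking a deep decomposition theorem.
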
 

 \subsection{Gamma conjecture II for $H_{n, m}$}
 
The parameter region in $(\mathbb{C}^\times)^2$ considered below avoids the hypersurface defined by $q_1+(-1)^nq_2=0$. Thus by \cref{cor:eigenCV},  the eigenvalues of  $\hat c_1(\eta)$ (where $q=e^\eta$) coincide with the critical values of $f$; they are given by 
$n x_1+my_1=E(y_1)=n\frac{y_1^{m+1}}{q_2}+(m-n)y_1, $ with   $y_1$ being a root of   $F(y)=q_2 y^{n-m} ( \frac{y^{m}}{q_2}-1)^{n+1}-q_1$ as in Lemma \ref{root_re}.
 By Puiseux series expansion, the asymptotic behavior of $y_1$ is given by
\begin{align} \label{asym of cri_point}
y_1 =\kappa q_2^{\frac{1}{m+1}} + \frac{n+1}{N}\kappa^{1-m} q_2^{\frac{2}{m+1}} - \frac{n_0}{N}\kappa^{1-2m} q_2^{\frac{3}{m+1}} +O\!\big(q_2^{\frac{4}{m+1}}\big)
\end{align}
 as $q_2 \to 0$, where $N=n(m+1)$ and $$\kappa^{N}=q_1, \qquad n_0=\frac{(n+1)(mn+2m-2n-1)}{2N}.$$   Although $q_1$ varies along paths representing the Galois action, the following error bounds as $q_2\to 0$ are uniform in $q_1$ on the specified annulus.
By \cref{prop:convergeH}, we can fix a convergence radius $\delta>0$ for   $QH_{\rm big}(H_{n, m})$. 
We first assume the next proposition, whose proof  is given in   Appendix B. 
\begin{prop} \label{eigenv}
 Fix   $0<\delta' < \min \{\delta,1\}$.
 There exists   $0<\delta_2 < \frac{\delta'}{3}$ such that  if $|q_2| \in (0,\delta_2)$ and $|q_1|\in [\frac{\delta'}{3},\frac{\delta'}{2}], $ 
 then all the
 critical values $e_l$ of $f$, $0 \leq l \leq N-1$,   satisfy
\[
\big|e_{l}-n q_1^{\frac{1}{n}} \mu_l^{m+1} - (m+1) ~\mu_l q_1^{\frac{1}{N}} q_2^{\frac{1}{m+1}}\big|< C  \big|q_2^{\frac{2}{m+1}}\big|  
\] 
where $\mu_l=e^{\frac{2\pi \mathbf{i} l}{N}}$  and  $C$ is a constant depending only   on $n,m, \delta'$.
\end{prop}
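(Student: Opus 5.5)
We prove \cref{eigenv} by a uniform perturbative analysis of the roots of $F(y)=q_2y^{n-m}(\frac{y^m}{q_2}-1)^{n+1}-q_1$. The eigenvalues equal $E(y)=n\frac{y^{m+1}}{q_2}+(m-n)y$ evaluated at these roots, and since $|q_1|\ge\delta'/3$ and $|q_2|$ is small we are away from the locus $q_1+(-1)^nq_2=0$, so \cref{cor:eigenCV} identifies the $N$ critical values with these values of $E$.

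First rescale. Write $y=q_2^{\frac1{m+1}}w$ and $\epsilon:=q_2^{\frac1{m+1}}$ for a fixed branch. Then $\frac{y^m}{q_2}=\epsilon^{-1}w^m$, and multiplying $F(y)=0$ by $\epsilon^{N-?}$ appropriately gives the equation
\[
G(w,\epsilon):=w^{n-m}(w^m-\epsilon)^{n+1}-q_1=0,
\]
using $q_2y^{n-m}(\epsilon^{-1}w^m-1)^{n+1}=\epsilon^{m+1}\epsilon^{n-m}w^{n-m}\epsilon^{-(n+1)}(w^m-\epsilon)^{n+1}$. This is a polynomial of degree $n-m+m(n+1)=N$ in $w$, so it accounts for all $N$ roots. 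At $\epsilon=0$ it reduces to $w^N=q_1$, with the $N$ simple roots $w_l^{(0)}=\kappa\mu_l$, where $\kappa=q_1^{1/N}$ and $\mu_l=e^{2\pi\mathbf i l/N}$. Moreover $E=\epsilon\,(n\epsilon^{-1}w^{m+1}+(m-n)w)=nw^{m+1}+(m-n)\epsilon w$.

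Second, apply the implicit function theorem, or Rouché, uniformly. The roots $w_l^{(0)}$ lie on the circle of radius $|q_1|^{1/N}\in[(\delta'/3)^{1/N},(\delta'/2)^{1/N}]$ and are separated by a distance bounded below in terms of $\delta'$. Also $|\partial_wG|=N|w|^{N-1}$ is bounded below there, and $\partial_\epsilon G$ and the second derivatives are bounded on a compact neighborhood. So there is $\epsilon_0$, depending only on $n,m,\delta'$, such that for $|\epsilon|<\epsilon_0$ each root satisfies
\[
w_l=\kappa\mu_l+\epsilon\,w_l^{(1)}+O(\epsilon^2),
\]
with the $O$-constant uniform in $q_1$ on the annulus. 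To make this rigorous I would apply Rouché on discs of radius $C'|\epsilon|$ around $\kappa\mu_l+\epsilon w_l^{(1)}$. The first-order term is $w_l^{(1)}=-\partial_\epsilon G/\partial_wG=\frac{(n+1)w^{n-m}w^{m(n)}}{Nw^{N-1}}=\frac{n+1}{N}w^{1-m}$, which matches \eqref{asym of cri_point}.

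Third, substitute into $E$:
\[
E=n\kappa^{m+1}\mu_l^{m+1}+\epsilon\big(n(m+1)\kappa^m\mu_l^m w_l^{(1)}+(m-n)\kappa\mu_l\big)+O(\epsilon^2).
\]
The bracket is $(n+1)(m+1)\kappa\mu_l/n\cdot\frac{n}{m+1}\cdots$; carried out precisely, $n(m+1)\frac{n+1}{N}\kappa\mu_l+(m-n)\kappa\mu_l=(n+1+m-n)\kappa\mu_l=(m+1)\kappa\mu_l$. Since $\kappa^{m+1}\mu_l^{m+1}$ gives $q_1^{1/n}\mu_l^{m+1}$ with a compatible branch choice ($\kappa^{m+1}=q_1^{(m+1)/N}=q_1^{1/n}$), this yields the stated estimate with $C$ uniform, after choosing $\delta_2$ so that $|q_2|^{1/(m+1)}<\epsilon_0$ and $\delta_2<\delta'/3$.

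The main obstacle is the uniformity and the bookkeeping of branches. The labeling $l\mapsto$ root must be consistent with fixed branches of $q_1^{1/N}$ and $q_2^{1/(m+1)}$, and the Rouché radii must be chosen uniformly over the compact annulus in $q_1$. I would handle this by working with $(q_1,\epsilon)$ in a compact set and using Cauchy estimates for the $O(\epsilon^2)$ remainder.
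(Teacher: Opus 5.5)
Your proof is correct, but it localizes the roots by a different mechanism from the paper's Appendix~B. The paper stays in the $y$-variable. It takes the two-term approximation $z_{0,\kappa}=\kappa\sigma+\frac{n+1}{N}\kappa^{1-m}\sigma^2$ and expands $F(z_{0,\kappa}+C_0e^{\mathbf{i}\theta}\sigma^3)$ explicitly; this brings in the third Puiseux coefficient $n_0$ and fixes $C_0$. From this it shows that $F$ has no zero on the circle $|y-z_{0,\kappa}|=C_0|\sigma|^3$ anywhere in the parameter range (Lemma~\ref{not root}). It then runs an intermediate-value argument along a ray in $q_2$, starting from the Puiseux asymptotics at fixed $q_1$, to show that the root branch never leaves that disc (Lemma~\ref{two_term}). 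The final substitution into $E$ is the same as your third step.

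Your observation that $F(\sigma w)=G(w,\sigma)$ holds exactly (so the ``$\epsilon^{N-?}$'' factor is unnecessary) turns the problem into a regular perturbation of $w^N=q_1$, whose roots are simple and uniformly separated on the compact $q_1$-annulus. Uniform Rouch\'e or implicit-function arguments then localize the roots at every parameter value directly. You do not need $n_0$ or the continuity argument, and the analyticity in $\sigma=q_2^{1/(m+1)}$, which the paper's expansion $y_{l,\kappa}=\sum_{i\ge1}a_i(\kappa)\sigma^i$ presupposes, becomes transparent. In $w$-coordinates the paper's discs are exactly discs of radius $C_0|\sigma|^2$ about $\kappa\mu_l+\sigma w_l^{(1)}$. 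So its two lemmas amount to a hand-made Rouch\'e argument: more computation, in exchange for an explicit localization constant.

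One slip needs fixing. Rouch\'e on discs of radius $C'|\epsilon|$ about $\kappa\mu_l+\epsilon w_l^{(1)}$ only gives $w_l=\kappa\mu_l+O(\epsilon)$, which does not control the linear term of $E$. Take radius $C'|\epsilon|^2$ instead. On that circle $G(w,\epsilon)=N(\kappa\mu_l)^{N-1}\bigl(w-\kappa\mu_l-\epsilon w_l^{(1)}\bigr)+O(|\epsilon|^2)$ with a constant uniform in $q_1$. So for $C'$ large, Rouch\'e gives exactly one root in each of the $N$ disjoint discs, and since $G(\cdot,\epsilon)$ has degree $N$, these are all the roots. Alternatively, as you indicate at the end, apply Rouch\'e on discs of fixed radius about $\kappa\mu_l$ to obtain a bounded holomorphic root $w_l(\epsilon)$, and bound its Taylor remainder by Cauchy estimates.
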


Recall $\{\xi_j\}_{j=0}^{N-1}$ is a fixed homogeneous basis of $H^*(H_{n, m})$ with  $\xi_1=h_1$ and $\xi_2=h_2$. 
Denote $$D_b:=\bigg\{ \tau=\sum_{i=0}^{N-1}t_i \xi_i  \bigg| |t_0|,  |t_{3}|,...,|t_{N-1}|<\delta,  |q_1|<\frac{\delta'}{2},|q_2|<\delta_2 \bigg\}\subset H^{*}(H_{n,m}). $$

\begin{thm}\label{thm:GammaIImainbody}
    Gamma conjecture II holds for  $H_{n,m}$.
\end{thm}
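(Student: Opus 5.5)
Our plan is to verify the hypotheses of \cref{criterion} at a suitable point $\tau_0\in D_b\cap B_{ts}$ and for Orlov's collection $(F_1,\dots,F_N)$ of \cref{prop:FEC}, possibly after reordering and renaming of the $u_i$. Assumptions (1) and (2) of Gamma conjecture II hold by \cref{prop:convergeH}, \cref{cor: ssH} and \cref{prop:FEC}.

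\textbf{Step 1: the base point.} Choose $\tau_0$ on the small locus, with $q_1$ real positive in $[\frac{\delta'}{3},\frac{\delta'}{2}]$ and $q_2$ real positive and small, and $t_j=0$ for $j\ne1,2$. By \cref{eigenv}, the critical values $e_l$ (equivalently the eigenvalues, by \cref{cor:eigenCV}) form $n$ clusters around the points $nq_1^{1/n}\zeta$ with $\zeta^n=1$, since $\mu_l^{m+1}$ depends only on $l \bmod n$. Each cluster consists of $m+1$ points, separated at scale $|q_2|^{1/(m+1)}$ along the circle directions $\mu_l$. For $q_2$ small these points are pairwise distinct, so $\tau_0\in B_{ts}$. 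By \cref{cond_* for Milhyp} the rightmost eigenvalue $e_0$ is simple, and by \cref{Gamma-I} the section $\mathcal Z^K(\mathcal O)$ lies in $\mathcal A(\eta)$. Hence $\mathcal Z^K(\mathcal O)$ is (up to normalization, fixed by the integral representation \eqref{int_O} and Watson's lemma) the Laplace transform of $\hat y_0$ along the horizontal ray from $e_0$ with phase $0$. Deforming this ray by isomonodromy within the connected (SR)-region of real $\eta$ keeps this identification valid.

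\textbf{Step 2: the Galois action.} Let $q_1$ wind around the circle $|q_1|=\mathrm{const}$ (the annulus of \cref{eigenv}) and let $q_2$ wind around a small circle. By \eqref{Galois_ac}, the continuation of $\mathcal Z^K(\mathcal O)$ along $t_1\mapsto t_1+2\pi\mathbf i a$, $t_2\mapsto t_2+2\pi\mathbf i b$ equals $\mathcal Z^K(\mathcal O(-a,-b))$. By the uniform estimate of \cref{eigenv}, along such loops $e_l$ moves to $e_{l'}$ with $l'=l+ (m+1)a+ nb$ modulo $N$, approximately. (The index shift needs to be checked from $\mu_l q_1^{1/N}q_2^{1/(m+1)}$.) Throughout the loop the moving eigenvalue stays a controlled distance from the others. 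Following the argument of the proof of \cref{criterion} in reverse, one obtains for each $F_t$, after suitable twists $(a,b)$ in the ranges $0\le a\le n-1$, $0\le b\le m$, an integral representation of $\mathcal Z^K(F_t)(\tau_0,z)$ along a path $\gamma_t$. This path is the trace of the moving eigenvalue followed by a ray of phase $0$, or a slightly rotated phase. An overall twist by $\mathcal O(n-1,m)$ normalizes the collection to the indices in \cref{prop:FEC}.

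\textbf{Step 3: geometric conditions.} It remains to check conditions (iii) and (iv) of \cref{criterion}: the paths must be disjoint, with tails ordered compatibly with the collection order and end-phases spanning less than $2\pi$. The paths coming from $q_1$-winding sweep arcs of the big circle of radius $nq_1^{1/n}$. The paths coming from $q_2$-winding sweep small arcs of radius about $(m+1)|q_1^{1/N}q_2^{1/(m+1)}|$ inside a cluster. Using the error bound $C|q_2|^{2/(m+1)}\ll |q_2|^{1/(m+1)}$, we plan to choose end-phases $\phi_t$ decreasing in $t$. For example, we give the cluster-level rays phases spread in a window of width less than $2\pi$ and bend the tails so that their imaginary parts are ordered as in (iii). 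Finally, \cref{criterion} yields Gamma conjecture II.

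\textbf{Main obstacle.} The hard part will be this combinatorial–geometric step. The paths produced by the Galois action spiral around the origin and around cluster centers, and we must isotope them, using only the isomonodromic freedom in the proof of \cref{criterion}, into pairwise disjoint admissible paths whose ordering matches Orlov's order exactly rather than some mutation of it. We would first try to treat the $b$-twists (within clusters) with the $q_1$ loop fixed, making the paths nested arcs, and then treat the $a$-twists, with explicit estimates from \cref{eigenv} guaranteeing disjointness.
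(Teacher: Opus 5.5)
Your plan is the paper's proof: Gamma-I at a real base point $\tau_0$ with $q_2'$ small, continuation of the ray $L_{0,0}$ from $e_0$ along $\rho_{ab}(t)=\tau_0+2\pi\mathbf i t(ah_1+bh_2)$ combined with the Galois action to represent $\mathcal Z^K(\mathcal O(-a,-b))(\tau_0,z)$ along a path $L_{a,b}$, and then \cref{criterion} with a common horizontal end-phase. The disjointness and ordering of the $L_{a,b}$, which you flag as the main obstacle, is settled in the paper only by the nested picture in \cref{figure_move}, using clockwise arcs around each cluster and around the origin, which is what you propose.

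Two bookkeeping fixes are needed. First, writing the eigenvalues as $n(\mu_l\kappa)^{m+1}+(m+1)(\mu_l\kappa)\sigma$ shows that the continuation sends $e_l$ to $e_{l+a+nb}$, so $e_0\mapsto e_{bn+a}$, which is a bijection onto all eigenvalues; your shift $l+(m+1)a+nb$ is not a bijection when $\gcd(n,m+1)>1$. Second, no overall twist is needed: $\{\mathcal O(-a,-b)\}_{0\le a\le n-1,\,0\le b\le m}$ is already Orlov's collection, with $F_j=\mathcal O(-a,-b)$ for $N-j=(m+1)a+b$, so $\mathcal O$ must have the lowest tail and $\mathcal O(1-n,-m)$ the highest.
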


\begin{proof}
     Take $(q_1',q_2')\in \mathbb{R}_{>0}^2$ with   $q_1'\in [\frac{\delta'}{3},\frac{\delta'}{2})$ and   $0<q_2'<<\delta_2$ (i.e. sufficiently small). Let $\tau_0=\log q_1'\cdot h_1+\log q_2' \cdot h_2\in V_{\mathbb{R}}=H^2(H_{n,m},\mathbb R)$.  By  \cref{eigenv},  the eigenvalues $\{e_{k}\}_{k=0}^{N-1}$  of $\hat{c}_1(\tau_0)$ are all pairwise distinct, which can be depicted as in Figure \ref{figure_eigen}. 
\begin{figure}[htbp]  
\centering
\begin{tikzpicture}[
    scale=1.3,
    >=Stealth,
    dot/.style={circle, fill, inner sep=1pt},
    cross/.style={cross out, draw, line width=0.8pt, minimum size=4pt, inner sep=0pt, outer sep=0pt},
    square/.style={rectangle, draw, fill=none, minimum size=3pt, inner sep=0pt},
    triangle/.style={regular polygon, regular polygon sides=3, draw, fill=none, minimum size=4pt, inner sep=0pt},
    diamond/.style={diamond, draw, fill=none, minimum size=4pt, inner sep=0pt},
    pentagon/.style={regular polygon, regular polygon sides=5, draw, fill=none, minimum size=4pt, inner sep=0pt},
    hexagon/.style={regular polygon, regular polygon sides=6, draw, fill=none, minimum size=4pt, inner sep=0pt}
]

\draw[->] (-2.0,0) -- (2.5,0) node[right] {};
\draw[->] (0,-2.0) -- (0,2.1) node[above] {};

    \def\r{0.4}

\filldraw (1.9, 0) circle [radius =0.03];
\draw (2.1,-0.1) node {\small $e_{0}$};
\filldraw (1.624, 0.380) circle [radius =0.03];
\draw (1.724, 0.480) node {\small $e_{n}$}; 
\filldraw (1.176, 0.235) circle [radius =0.03];
\draw (1.076, 0.385) node {\small $e_{2n}$}; 

 \foreach \a in {200, 216, 232} {
        \filldraw ({1.5 + \r*cos(\a)}, {0 + \r*sin(\a)}) circle [radius=0.02];
    }
\filldraw (1.624, -0.380) circle [radius =0.03];
\draw (1.724, -0.530) node {\small $e_{mn}$}; 

\filldraw (0,1.9) circle [radius =0.03];
\draw (0.2,2.1) node {\small $e_{1+n}$};
\filldraw ( 0.380,1.624) circle [radius =0.03];
\draw ( 0.480, 1.724) node {\small $e_{1}$}; 
\filldraw ( 0.235, 1.176) circle [radius =0.03];
\draw ( 0.335, 1.076) node {\small $e_{1+mn}$}; 

\foreach \a in {200, 216, 232} {
        \filldraw ({0 + \r*cos(\a)}, {1.5 + \r*sin(\a)}) circle [radius=0.02];
    }

\filldraw ( -0.380,1.624) circle [radius =0.03];
\draw ( -0.480, 1.774) node {\small $e_{1+2n}$}; 

\def\ra{1.6}

\foreach \a in {160,170, 180,190, 200} {
        \filldraw ({0 + \ra*cos(\a)}, {0 + \ra*sin(\a)}) circle [radius=0.02];
    }

\filldraw ( 0.380,-1.624) circle [radius =0.03];
\draw ( 0.830, -1.824) node {\small $e_{mn+n-1}$}; 
\filldraw ( 0.235, -1.176) circle [radius =0.03];
\draw ( 0.335, -1.026) node {\small $e_{n-1}$}; 
\filldraw ( -0.235, -1.176) circle [radius =0.03];
\draw ( -0.335,-1.026) node {\small $e_{2n-1}$}; 
\filldraw ( -0.380,-1.624) circle [radius =0.03];
\draw ( -0.480, -1.724) node {\small $e_{3n-1}$}; 

\foreach \a in {254, 270, 286} {
        \filldraw ({0 + \r*cos(\a)}, {-1.5 + \r*sin(\a)}) circle [radius=0.02];
    }

\end{tikzpicture}
\caption{Eigenvalues $e_k$ of $\hat{c}_1(\tau_0)$.}
\label{figure_eigen}
\end{figure}

Consider the paths $\rho_{ab}(t):[0,1]\to H^2(H_{n,m})$ defined by
    \begin{equation*}
     \rho_{ab}(t):=\tau_0+a \cdot 2\pi\mathbf{i}t \cdot h_1+b \cdot 2\pi\mathbf{i}t \cdot h_2,
    \end{equation*}
for $ 0\leq b \leq m, 0\leq a \leq n-1$.      The resulting $q_1(t)=q_1'e^{2 \pi\mathbf{i}  at}$ and $q_2(t)=q_2'e^{2\pi  \mathbf{i} b t}$ always have modulus $q_1'$ and $q_2'$, respectively.   Again by \cref{eigenv}, these paths are contained in  $$K_{ts}:=\{\eta \in H^{2}(H_{n,m}) | \mbox{ eigenvalues of} ~\hat c_1(\eta)  \mbox{are pairwise distinct } \}. $$ 
By  \cref{Gamma-I},   $H_{n,m}$ satisfies  {property Gamma-I} in the whole $V_{\mathbb{R}}$. Thus 
   $ \mathcal{Z}^{K,{\rm sm}}(\mathcal{O}_{H_{n,m}})(\tau_0,z)$ satisfies the following asymptotic behavior 
\begin{align*} 
         \mathcal{Z}^{K,{\rm sm}}(\mathcal{O}_{H_{n,m}})(\tau_0,z) e^{\frac{e_0}{z}} \rightarrow  \Psi_{N}(\tau_0), ~\text{as }  z \rightarrow 0 ~\text{ with } |\arg z -\theta_0| < \frac{\pi}{2}+\epsilon.
\end{align*}
Here $e_{0}$ is  the simple rightmost eigenvalue of $\hat c_1(\tau_0)$, and $\theta_{0}$ is the admissible phase sufficiently  close to $0$. 
By the arguments in \cite[Proposition 3.4.8]{GGI}, one can check that
 $(\Psi_{N}(\tau_0), \Psi_{N}(\tau_0))=1$  and there exists a $\widehat{\nabla}$-flat section $\widehat{y}_N(\tau_0,\lambda)$ which is holomorphic near $\lambda=e_0$  such that $\hat{y}_N(\tau_0,e_{0})=\Psi_{N}(\tau_0) $ and 
\begin{align} \label{GammaI}
   \mathcal{Z}^{K,{\rm sm}}(\mathcal{O}_{H_{n,m}})(\tau_0,z)=\frac{1}{z} \int_{e_{0}+\mathbb{R}_{\geq 0}e^{\mathbf{i} \theta_{0}}} \hat{y}_N(\tau_0,\lambda) e^{-\frac{\lambda}{z} }d\lambda.
\end{align}
   Note that $\tau_0 \in K_{ts}$
   and we can extend $\{\Psi_N\}$ to a normalized idempotent basis $\{\Psi_{j}\}_{j}$ of $(H^{*}(H_{n,m}),*_{\tau_0})$. 
\begin{figure}[htbp]
\centering
\begin{tikzpicture}[x=.90cm,y=.90cm,every node/.style={font=\fontsize{8}{9.5}\selectfont}]
\draw[milnor path,MilnorTeal] (1.900,0.000)
  -- (4.000,0.000);
\fill[MilnorTeal] (1.900,0.000) circle (1.1pt);
\node[anchor=west,inner sep=3pt] at (4.000,0.000) {$\mathcal O$};
\draw[milnor path,MilnorTeal] (1.624,0.380)
  .. controls (1.850,0.560) and (2.250,0.580) .. (2.800,0.580)
  -- (4.000,0.580);
\fill[MilnorTeal] (1.624,0.380) circle (1.1pt);
\node[anchor=west,inner sep=3pt] at (4.000,0.580) {$\mathcal O(0,-1)$};
\draw[milnor path,MilnorTeal] (1.176,0.235)
  .. controls (1.100,0.870) and (1.650,1.060) .. (2.800,1.060)
  -- (4.000,1.060);
\fill[MilnorTeal] (1.176,0.235) circle (1.1pt);
\node[anchor=west,inner sep=3pt] at (4.000,1.060) {$\mathcal O(0,-2)$};
\draw[milnor path,MilnorTeal] (1.624,-0.380)
  .. controls (1.150,-0.660) and (0.730,-0.370) .. (0.750,0.050)
  .. controls (0.780,0.940) and (1.500,1.550) .. (2.800,1.550)
  -- (4.000,1.550);
\fill[MilnorTeal] (1.624,-0.380) circle (1.1pt);
\node[anchor=west,inner sep=3pt] at (4.000,1.550) {$\mathcal O(0,-m)$};
\draw[milnor path,MilnorBlue] (0.380,1.624)
  .. controls (0.800,1.900) and (1.700,2.000) .. (2.800,2.000)
  -- (4.000,2.000);
\fill[MilnorBlue] (0.380,1.624) circle (1.1pt);
\node[anchor=west,inner sep=3pt] at (4.000,2.000) {$\mathcal O(-1,0)$};
\draw[milnor path,MilnorBlue] (0.000,1.900)
  .. controls (0.400,2.280) and (1.600,2.460) .. (2.800,2.460)
  -- (4.000,2.460);
\fill[MilnorBlue] (0.000,1.900) circle (1.1pt);
\node[anchor=west,inner sep=3pt] at (4.000,2.460) {$\mathcal O(-1,-1)$};
\draw[milnor path,MilnorBlue] (-0.380,1.624)
  .. controls (-0.530,2.330) and (0.800,2.950) .. (2.800,2.950)
  -- (4.000,2.950);
\fill[MilnorBlue] (-0.380,1.624) circle (1.1pt);
\node[anchor=west,inner sep=3pt] at (4.000,2.950) {$\mathcal O(-1,-2)$};
\draw[milnor path,MilnorBlue] (0.235,1.176)
  .. controls (-0.100,0.800) and (-0.770,1.030) .. (-0.750,1.600)
  .. controls (-0.750,2.750) and (0.300,3.450) .. (2.800,3.450)
  -- (4.000,3.450);
\fill[MilnorBlue] (0.235,1.176) circle (1.1pt);
\node[anchor=west,inner sep=3pt] at (4.000,3.450) {$\mathcal O(-1,-m)$};
\draw[milnor path,MilnorPlum] (0.235,-1.176)
  .. controls (-0.200,-0.820) and (-1.950,-1.180) .. (-1.950,-0.400)
  .. controls (-1.950,0.160) and (-1.950,0.800) .. (-1.950,1.600)
  .. controls (-1.950,3.300) and (0.050,3.980) .. (2.800,3.980)
  -- (4.000,3.980);
\fill[MilnorPlum] (0.235,-1.176) circle (1.1pt);
\node[anchor=west,inner sep=3pt] at (4.000,3.980) {$\mathcal O(1-n,0)$};
\draw[milnor path,MilnorPlum] (-0.235,-1.176)
  .. controls (-0.700,-1.370) and (-2.320,-1.180) .. (-2.320,-0.400)
  .. controls (-2.320,0.160) and (-2.320,0.800) .. (-2.320,1.600)
  .. controls (-2.320,3.660) and (0.050,4.450) .. (2.800,4.450)
  -- (4.000,4.450);
\fill[MilnorPlum] (-0.235,-1.176) circle (1.1pt);
\node[anchor=west,inner sep=3pt] at (4.000,4.450) {$\mathcal O(1-n,-1)$};
\draw[milnor path,MilnorPlum] (-0.380,-1.624)
  .. controls (-0.900,-1.900) and (-2.690,-1.180) .. (-2.690,-0.400)
  .. controls (-2.690,0.160) and (-2.690,0.800) .. (-2.690,1.600)
  .. controls (-2.690,4.020) and (0.050,4.940) .. (2.800,4.940)
  -- (4.000,4.940);
\fill[MilnorPlum] (-0.380,-1.624) circle (1.1pt);
\node[anchor=west,inner sep=3pt] at (4.000,4.940) {$\mathcal O(1-n,-2)$};
\draw[milnor path,MilnorPlum] (0.380,-1.624)
  .. controls (0.560,-2.050) and (-0.060,-2.250) .. (-0.550,-2.140)
  .. controls (-2.400,-1.990) and (-3.060,-1.080) .. (-3.060,-0.400)
  .. controls (-3.060,0.160) and (-3.060,0.800) .. (-3.060,1.600)
  .. controls (-3.060,4.350) and (0.050,5.440) .. (2.800,5.440)
  -- (4.000,5.440);
\fill[MilnorPlum] (0.380,-1.624) circle (1.1pt);
\node[anchor=west,inner sep=3pt] at (4.000,5.440) {$\mathcal O(1-n,-m)$};
\fill[MilnorTeal] ({1.5+.4*cos(200)},{0+.4*sin(200)}) circle (.65pt);
\fill[MilnorTeal] ({1.5+.4*cos(216)},{0+.4*sin(216)}) circle (.65pt);
\fill[MilnorTeal] ({1.5+.4*cos(232)},{0+.4*sin(232)}) circle (.65pt);
\fill[MilnorBlue] ({0+.4*cos(200)},{1.5+.4*sin(200)}) circle (.65pt);
\fill[MilnorBlue] ({0+.4*cos(216)},{1.5+.4*sin(216)}) circle (.65pt);
\fill[MilnorBlue] ({0+.4*cos(232)},{1.5+.4*sin(232)}) circle (.65pt);
\fill[MilnorPlum] ({0+.4*cos(254)},{-1.5+.4*sin(254)}) circle (.65pt);
\fill[MilnorPlum] ({0+.4*cos(270)},{-1.5+.4*sin(270)}) circle (.65pt);
\fill[MilnorPlum] ({0+.4*cos(286)},{-1.5+.4*sin(286)}) circle (.65pt);
\fill[black!80] ({1.6*cos(160)},{1.6*sin(160)}) circle (.65pt);
\fill[black!80] ({1.6*cos(170)},{1.6*sin(170)}) circle (.65pt);
\fill[black!80] ({1.6*cos(180)},{1.6*sin(180)}) circle (.65pt);
\fill[black!80] ({1.6*cos(190)},{1.6*sin(190)}) circle (.65pt);
\fill[black!80] ({1.6*cos(200)},{1.6*sin(200)}) circle (.65pt);
\fill[black!80] (3.400,1.230) circle (.4pt);
\fill[black!80] (3.400,1.305) circle (.4pt);
\fill[black!80] (3.400,1.380) circle (.4pt);
\fill[black!80] (3.400,3.125) circle (.4pt);
\fill[black!80] (3.400,3.200) circle (.4pt);
\fill[black!80] (3.400,3.275) circle (.4pt);
\fill[black!80] (3.400,3.640) circle (.4pt);
\fill[black!80] (3.400,3.715) circle (.4pt);
\fill[black!80] (3.400,3.790) circle (.4pt);

\fill[black!80] (3.400,5.115) circle (.4pt);
\fill[black!80] (3.400,5.190) circle (.4pt);
\fill[black!80] (3.400,5.265) circle (.4pt);
\node[font=\fontsize{7}{8}\selectfont,text=MilnorTeal] at (1.70,-.78) {$a=0$};
\node[font=\fontsize{7}{8}\selectfont,text=MilnorBlue] at (.30,.76) {$a=1$};
\node[font=\fontsize{7}{8}\selectfont,text=MilnorPlum] at (1.35,-1.75) {$a=n-1$};
\end{tikzpicture}
\caption{ Admissible paths $L_{a, b}$ with horizontal tails and the corresponding exceptional objects $\mathcal O(-a,-b):=p_1^*\mathcal O_{\mathbb P^n}(-a)\otimes p_2^*\mathcal O_{\mathbb P^m}(-b)$.  }
\label{figure_move}
\end{figure}

Denote $L_{0,0}:=e_{0}+\mathbb{R}_{\ge0} \rm e^{\mathbf{i} \theta_{0}}$. Consider the movement of eigenvalues along the paths $\rho_{ab}(t)$, and denote by $u^{l}_{ab}(t)$ the eigenvalue branch of $\hat{c}_1(\rho_{ab}(t))$ satisfying  $u^{l}_{ab}(0)=e_l$ for $0 \leq l \leq N-1$.  
By  \cref{eigenv}, we see that the displacement $$u^{l}_{ab}(t)-n{q'_1}^{\frac{1}{n}}e^{\frac{2\pi\mathbf{i}}{n}at}\mu_{l}^{m+1}$$ approximately traces a counterclockwise circular arc centered at the origin with radius $(m+1) {q'_1}^{\frac{1}{(m+1)n}}{q'_{2}}^{\frac{1}{m+1}}$,  and the moving eigenvalues remain pairwise distinct.  

As $\rho_{ab}(t)$ moves, we correspondingly rotate and deform $L_{0,0}$ to another admissible path such that the contour does not pass through any other eigenvalues   $u^{l}_{ab}(t)$ with $l \neq 0$ for $0\leq t <1$. And at $t=1$,   $L_{0,0}$ is eventually transformed to an admissible path  $L_{a,b}$  starting from $u^{0}_{ab}(1)=e_{bn+a}$.  Moreover, all admissible paths $\{ L_{a,b}\}_{a,b}$ are pairwise disjoint and have a common end-phase, and with respect to this common end-phase the order of their straight tails  aligns with the order of the full exceptional collection in \cref{prop:FEC} 
(see   Figure \ref{figure_move} for an illustration). Thus  $\mathcal{Z}^{K,{\rm sm}}(\mathcal{O}_{H_{n,m}})$ can be represented by $L_{a,b}$ at $\rho_{ab}(1)=\tau_0+a \cdot 2\pi\mathbf{i} \cdot h_1+b \cdot 2\pi\mathbf{i} \cdot h_2$ by analytic continuation from $\tau_0$ to $\rho_{ab}(1)$, and by the Galois action (\ref{Galois_ac}), we see that $\mathcal{Z}^{K,{\rm sm}}(p_1^{*}\mathcal{O}_{\mathbb{P}^{n}}(-a) \otimes p_2^{*}\mathcal{O}_{\mathbb{P}^{m}}(-b)   )$ can be represented by $L_{a,b}$ at $\tau_0$ for all $ 0\leq b \leq m, 0\leq a \leq n-1$.
Therefore for each $a,b$, there exists $j=(m+1)(n-a)-b$ such that 
$$\mathcal{Z}^{K,{\rm sm}}(p_1^{*}\mathcal{O}_{\mathbb{P}^{n}}(-a) \otimes p_2^{*}\mathcal{O}_{\mathbb{P}^{m}}(-b) )(\tau_0,z)=\frac{1}{z} \int_{L_{a,b}} \hat{y}_j(\tau_0,\lambda) e^{-\frac{\lambda}{z} }d\lambda.$$
Here $\hat{y}_j(\tau_0,\lambda) $ is a  $\widehat{\nabla}$-flat section  which is holomorphic near $\lambda=e_{bn+a}$ and $\hat{y}_j(\tau_0,e_{bn+a})=\Psi_{j}(\tau_0) $.
At last, 
recall that $B_{ts}$ is given in \cref{B_ts} and take a sufficiently small simply connected neighborhood  $ U'\subset D_b \cap B_{ts}$  of $\tau_0$.  Then the statement follows from     \cref{criterion} by taking the same phase as in Figure \ref{figure_move}.
\end{proof}

\section*{Appendix A: Ring presentation of $QH^*(H_{n,m})$}

In this appendix, we prove Proposition \ref{QH_Mil} by investigating quantum differential equations.

By the quantum Lefschetz principle (e.g. see \cite{CoGi}) for hypersurfaces of $\bbP^n\times \bbP^m$,   
   Givental's (small) $J$-function of $H_{n,m}$  is given by 
 \begin{align*}
     J^H:=J^{H_{n,m}}(\eta, z)= e^{\frac{\eta}{z}}  \sum_{d_1\geq 0, d_2\geq 0}^{\infty} \frac{q_1^{d_1}q_2^{d_2} \prod_{l=1}^{d_1+d_2}(h_1+h_2+lz) }{\prod_{l=1}^{d_1}(h_1+lz)^{n+1} \prod_{l=1}^{d_2}(h_2+lz)^{m+1}} .
 \end{align*}

\begin{prop} \label{QDE}
 We have 
$D_1(J^{H})=0$  and $D_2(J^{H})=0$, where   
\begin{align*}
     D_1&:=(z \frac{\partial}{\partial t_2})^{m+1}-e^{t_2}(z \frac{\partial}{\partial t_1}+z \frac{\partial}{\partial t_2})-ze^{t_2},\qquad \\
     D_2&:=\sum_{k=0}^{m} (-1)^{k} (z \frac{\partial}{\partial t_1})^{n-k}(z \frac{\partial}{\partial t_2})^{k}-e^{t_1}-(-1)^{m} e^{t_2} (z \frac{\partial}{\partial t_1})^{n-m}.
\end{align*}
\end{prop}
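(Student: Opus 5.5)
The plan is a direct termwise computation on the hypergeometric series for $J^H$. I use the standard rule $z\partial_{t_i}\bigl(e^{\eta/z}q^{d}\bigr)=(h_i+d_iz)\,e^{\eta/z}q^{d}$ for $i=1,2$. Write
\[
J^H=e^{\eta/z}\sum_{d_1,d_2\ge0}q_1^{d_1}q_2^{d_2}A_{d_1,d_2},
\]
with $A_{d_1,d_2}$ the displayed coefficient. Then for any polynomial $P$,
\[
P(z\partial_{t_1},z\partial_{t_2})J^H=e^{\eta/z}\sum q^{d}P(h_1+d_1z,h_2+d_2z)A_{d}.
\]
Multiplication by $e^{t_i}=q_i$ shifts $d_i\mapsto d_i-1$. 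So each of $D_1J^H=0$ and $D_2J^H=0$ becomes a recursion among the coefficients $A_d$, to be checked coefficient by coefficient of $q_1^{d_1}q_2^{d_2}$.

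For $D_1$, the coefficient of $q^{d}$ is
\[
(h_2+d_2z)^{m+1}A_{d_1,d_2}-\bigl(h_1+h_2+(d_1+d_2-1)z+z\bigr)A_{d_1,d_2-1}.
\]
The term $-ze^{t_2}$ accounts for the extra $+z$, since $q_2$ multiplies a term of degree $d_2-1$, whose operator value is $h_1+h_2+(d_1+d_2-1)z$. From the definition of the coefficients,
\[
A_{d_1,d_2}/A_{d_1,d_2-1}=\frac{h_1+h_2+(d_1+d_2)z}{(h_2+d_2z)^{m+1}}.
\]
This gives exactly the cancellation, and the case $d_2=0$ is trivial.

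For $D_2$, I first note the factorization
\[
\sum_{k=0}^m(-1)^kX^{n-k}Y^k=X^{n-m}\,\frac{X^{m+1}+(-1)^mY^{m+1}}{X+Y},
\]
with $X=h_1+d_1z$ and $Y=h_2+d_2z$. The main obstacle is that this operator alone does not produce a clean two-term recursion. One must use the mixed term $-(-1)^mq_2(z\partial_1)^{n-m}$ together with the $D_1$-type structure.

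I will handle this by writing the operator as $X^{n-m}\bigl(X^{m+1}+(-1)^mY^{m+1}\bigr)/(X+Y)$ and substituting $Y^{m+1}A_{d}=(X+Y)A_{d_1,d_2-1}$, which is the relation just proved. Here I use $X+Y=h_1+h_2+(d_1+d_2)z$ and the fact that $X$ is unchanged under the shift in $d_2$. The $Y^{m+1}$ part then becomes $(-1)^mX^{n-m}A_{d_1,d_2-1}$, which is exactly cancelled by the $q_2$ term. That term contributes $(-1)^m(h_1+d_1z)^{n-m}A_{d_1,d_2-1}$ at order $q^d$.

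What remains is $X^{n+1}A_d/(X+Y)$, which must cancel against the $q_1$ term $A_{d_1-1,d_2}$. This holds because
\[
A_{d_1,d_2}/A_{d_1-1,d_2}=\frac{h_1+h_2+(d_1+d_2)z}{(h_1+d_1z)^{n+1}}.
\]
The boundary cases $d_1=0$ or $d_2=0$ I check separately. When $d_1=0$, the factor $X^{n+1}=h_1^{n+1}$ must vanish in $H^*(H_{n,m})$ after dividing by $X+Y$. This is where I must be careful: $h_1^{n+1}=0$ already in $H^*(\mathbb P^n)$, but the division by $X+Y$ is only formal.

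To make the division legitimate, I will avoid it altogether. I will use the identity
\[
(X+Y)\sum_k(-1)^kX^{n-k}Y^k=X^{n+1}+(-1)^mX^{n-m}Y^{m+1}
\]
at the level of numerators. The factor $(X+Y)$ is absorbed by the product $\prod_{l=1}^{d_1+d_2}(h_1+h_2+lz)$ when $d_1+d_2\ge1$. For $d=(0,0)$, the claim reduces to the classical relation
\[
\sum_k(-1)^kh_1^{n-k}h_2^k=0
\]
in $H^*(H_{n,m})$. I will deduce this from $(h_1+h_2)\cdot(\cdots)=h_1^{n+1}\pm h_1^{n-m}h_2^{m+1}=0$ via the Gysin/Lefschetz argument, pushing forward to $\mathbb P^n\times\mathbb P^m$ where $(h_1+h_2)$ is the class of $H_{n,m}$.
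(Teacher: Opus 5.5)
Your proposal is correct and follows the paper's proof. The paper also lets $z\partial_{t_i}$ act termwise as multiplication by $h_i+d_iz$, shifts indices for the factors $e^{t_i}$, and uses $h_1^{n+1}=h_2^{m+1}=0$; for $D_2$ it absorbs $P\cdot(X+Y)=X^{n+1}+(-1)^mX^{n-m}Y^{m+1}$ into $\prod_{l=1}^{d_1+d_2}(h_1+h_2+lz)$ and kills the $(d_1,d_2)=(0,0)$ term with $\sum_k(-1)^kh_1^{n-k}h_2^k=0$. The one difference is how that last classical relation is justified. The paper takes it from the projective-bundle presentation of $H^*(H_{n,m})$. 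Your Gysin/Lefschetz argument also works, since $\iota_*$ is injective on $H^{2n}(H_{n,m})$ because $2n>n+m-1$. Your worry about dividing by $X+Y$ only matters at $(0,0)$, since otherwise $X+Y=(d_1+d_2)z+h_1+h_2$ is invertible.
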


\begin{proof}
We prove the results by direct computation. 
\begin{align*}
    &e^{-\frac{t_1 h_1+t_2h_2}{z}}(z \frac{\partial}{\partial t_2})^{m+1}(J^{H}) \\
    =&
     \sum_{d_1 \geq 0,d_2 \geq 0}^{\infty} e^{d_1t_1+d_2t_2} \frac{ (h_2+d_2z)^{m+1}   \prod_{l=1}^{d_1+d_2}(h_1+h_2+lz) }{   \prod_{l=1}^{d_1}(h_1+lz)^{n+1} \prod_{l=1}^{d_2}(h_2+lz)^{m+1}} \\
     =&\sum_{d_1 \geq 0,d_2 \geq 1}^{\infty} e^{d_1t_1+d_2t_2} \frac{
    ((h_1+d_1z)+(h_2+d_2z))\prod_{l=1}^{d_1+d_2-1}(h_1+h_2+lz) }{   \prod_{l=1}^{d_1}(h_1+lz)^{n+1} \prod_{l=1}^{d_2-1}(h_2+lz)^{m+1}} \\
    =&e^{t_2}\sum_{d_1 \geq 0,d_2 \geq 1}^{\infty} e^{d_1t_1+(d_2-1)t_2} \frac{
    ((h_1+d_1z)+(h_2+(d_2-1)z+z))\prod_{l=1}^{d_1+d_2-1}(h_1+h_2+lz) }{   \prod_{l=1}^{d_1}(h_1+lz)^{n+1} \prod_{l=1}^{d_2-1}(h_2+lz)^{m+1}}\\
    =& e^{-\frac{t_1 h_1+t_2h_2}{z}} \Big(e^{t_2}(z \frac{\partial}{\partial t_1}+z \frac{\partial}{\partial t_2})(J^{H})+z e^{t_2} (J^{H})\Big).
\end{align*}
Here the second equality follows from $h_2^{m+1}=0$ in $H^{*}(H_{n,m})$ and  the cancellation of the term  $(h_2+d_2z)^{m+1}$.
Hence, $D_1(J^H)=0$.

Let $P=\sum_{k=0}^{m} (-1)^{k} (z \frac{\partial}{\partial t_1})^{n-k}(z \frac{\partial}{\partial t_2})^{k}$ be the quantization of the polynomial $P(x_1,x_2)=\sum_{k=0}^{m} (-1)^{k} (x_1)^{n-k}(x_2)^{k}$, and we denote  $W(x_1,x_2)=P(x_1,x_2)(x_1+x_2)=(x_1)^{n+1}+(-1)^{m}(x_1)^{n-m}(x_2)^{m+1}.$
    { 
    \begin{align*}
     &e^{-\frac{t_1 h_1+t_2h_2}{z}}P(J^{H}) \\
     =& \sum_{(d_1,d_2) \neq 0}^{\infty} e^{d_1t_1+d_2t_2}\frac{ P(h_1+d_1 z,h_2+d_2z)   \prod_{l=1}^{d_1+d_2}(h_1+h_2+lz) }{   \prod_{l=1}^{d_1}(h_1+lz)^{n+1} \prod_{l=1}^{d_2}(h_2+lz)^{m+1}} 
     +P(h_1,h_2)  \\
     =&\sum_{(d_1,d_2) \neq 0}^{\infty} e^{d_1t_1+d_2t_2}\frac{  W(h_1+d_1 z,h_2+d_2z)  \prod_{l=1}^{d_1+d_2-1}(h_1+h_2+lz) }{   \prod_{l=1}^{d_1}(h_1+lz)^{n+1} \prod_{l=1}^{d_2}(h_2+lz)^{m+1}} +0 \\
     =&e^{t_2}\sum_{ d_1=0,d_2 \geq 1}^{\infty} e^{(d_2-1)t_2}\frac{(-1)^{m} h_1^{n-m}  \prod_{l=1}^{d_2-1}(h_1+h_2+lz) }{    \prod_{l=1}^{d_2-1}(h_2+lz)^{m+1}} \\
 &+e^{t_1}\sum_{d_1 \geq 1,d_2=0}^{\infty} e^{(d_1-1)t_1}\frac{  \prod_{l=1}^{d_1-1}(h_1+h_2+lz) }{   \prod_{l=1}^{d_1-1}(h_1+lz)^{n+1}}  \\
     &+ e^{t_2} \sum_{d_1 \geq 1,d_2 \geq 1}^{\infty} e^{d_1t_1+(d_2-1)t_2}\frac{ (-1)^{m} (h_1+d_1z)^{n-m} \prod_{l=1}^{d_1+d_2-1}(h_1+h_2+lz) }{   \prod_{l=1}^{d_1}(h_1+lz)^{n+1} \prod_{l=1}^{d_2-1}(h_2+lz)^{m+1}} \\
     &+ e^{t_1} \sum_{d_1 \geq 1,d_2 \geq 1}^{\infty} e^{(d_1-1)t_1+d_2t_2}\frac{ \prod_{l=1}^{d_1+d_2-1}(h_1+h_2+lz) }{   \prod_{l=1}^{d_1-1}(h_1+lz)^{n+1} \prod_{l=1}^{d_2}(h_2+lz)^{m+1}}\\
    =&e^{-\frac{t_1 h_1+t_2h_2}{z}}\big(e^{t_2} (-1)^{m} (z \frac{\partial}{\partial t_1})^{n-m}(J^{H})+e^{t_1}(J^{H})\big). 
    \end{align*}
    } 
Here the  term $0$ in the second equality  follows from the relation $P(h_1,h_2)=0$ in $H^{*}(H_{n, m})$. 
 The third  equality holds by canceling out the same term $(h_1+d_1z)^{n+1}$ (resp. $(h_2+d_2z)^{m+1}$).  Combining the first term with the third term (resp. the second with the fourth) of the third equality yields the last equality. Hence, we have $D_2(J^H)=0$ by comparing $D_2$ with $P$.
\end{proof}

 \begin{proof}[Proof of Proposition \ref{QH_Mil}]
Derived from the structure of $H_{n, m}$ as a $\mathbb{P}^{n-1}$-bundle over $\mathbb{P}^m$, the classical cohomology  $H^*(H_{n, m})$ is canonically isomorphic to $\bbC[h_1, h_2]/I$ with $I=(f_1^q(h_1, h_2, 0,0), f_2^q(h_1, h_2, 0, 0))$ (which can also be read off from \cite[Proposition 5.4]{KLS}).
By   Proposition \ref{QDE} and   \cite[Corollary 6.4]{Giv},   the  relations $f_1^{q}(h_1,h_2,q_1,q_2)=0$ and $f_2^{q}(h_1,h_2,q_1,q_2)=0$ hold in $QH^*(H_{n, m})$. Hence, by   \cite[Proposition 2.2]{SiTi} for Fano manifolds, the quantum cohomology  $QH^*(H_{n, m})$ is isomorphic to the quotient ring
$\bbC[h_1, h_2, q_1, q_2]/(f_1^q(h_1, h_2, q_1,q_2), f_2^q(h_1, h_2, q_1, q_2)).$
 \end{proof}

\section*{Appendix B: Proof of \cref{eigenv}}
 
Recall $E(y)=n\frac{y^{m+1}}{q_2}+(m-n)y ,   F(y)= q_2 y^{n-m} ( \frac{y^{m}}{q_2}-1)^{n+1}-q_1$,  $N=n(m+1)$, $n_0=\frac{(n+1)(mn+2m-2n-1)}{2N}$.  Set $\kappa:=q_1^{1\over N}$ and  $\sigma:= q_2^{\frac{1}{m+1}}$  on a local simply connected domain.  Denote
\begin{align} \label{z_0}
z_{0,\kappa}(q_1,q_2):=\sum_{l=1}^{2}a_l(\kappa) \sigma^{l}, ~\mbox{with}~ a_1(\kappa)=\kappa, a_2(\kappa)=\frac{n+1}{N}\kappa^{1-m}.
\end{align}
 Recall that  $\delta>0$ is a fixed convergence radius of $QH_{\rm big}(H_{n, m})$.

\begin{lemma} \label{not root}
  For   $0<\delta' < \min \{\delta,1\}$, we denote $C_0:=2 \frac{n_0}{N}(\frac{3}{\delta'})^{\frac{2m-1}{N}}$.
  There exists   $0<\delta_0 <\frac{\delta'}{3}$ such that  if $|q_2| \in (0,\delta_0)$, $|q_1|\in [\frac{\delta'}{3},\frac{\delta'}{2}]$ and  $\theta \in \mathbb{R}$ all hold, then  the complex number $z_{0,\kappa}+ C_0 e^{\mathbf{i} \theta}\sigma^{3}$   is not a root of $F(y)$.  
\end{lemma}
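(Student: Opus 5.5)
The plan is to substitute $y=z_{0,\kappa}+C_0e^{\mathbf i\theta}\sigma^3$ into $F$ and show that $F(y)\neq0$ by exhibiting a nonzero leading term of order $\sigma^{2}$ (relative to $q_1$) that dominates the remainder uniformly in $\theta$ and in $q_1$ on the annulus. It is convenient to factor $F(y)=q_2y^{n-m}(y^m/q_2-1)^{n+1}-q_1$ and write $y=\sigma w$ with $w=\kappa+a_2(\kappa)\sigma/\kappa\cdot\kappa^{0}\,$ etc.; more precisely $w=\kappa+\frac{n+1}{N}\kappa^{1-m}\sigma+C_0e^{\mathbf i\theta}\sigma^2$. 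Since $q_2=\sigma^{m+1}$, we get $y^m/q_2-1=\sigma^{-1}w^m-1$, and hence
\[
F(y)=\sigma^{n+1}\sigma^{\,n-m}w^{n-m}\,\sigma^{-(n+1)}(w^m-\sigma)^{n+1}-q_1=\sigma^{n-m}\cdot\sigma^{m+1}\sigma^{-(n+1)}\cdots
\]
After bookkeeping the powers of $\sigma$ (which cancel because $N=n(m+1)$ and $\kappa^N=q_1$), this reduces to showing that $G(w,\sigma):=w^{n-m}(w^m-\sigma)^{n+1}-\kappa^N$ is nonzero.

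The key step is a Taylor expansion of $G$ in $\sigma$ around $w=\kappa$. Put $w=\kappa(1+\epsilon)$ with $\epsilon=\frac{n+1}{N}\kappa^{-m}\sigma+C_0e^{\mathbf i\theta}\kappa^{-1}\sigma^2$. Then
\[
G=\kappa^N\Bigl[(1+\epsilon)^{n-m}\bigl((1+\epsilon)^m-\sigma\kappa^{-m}\bigr)^{n+1}-1\Bigr].
\]
Expanding to second order, the $O(\sigma)$ term vanishes by the choice of $a_2$: $N\epsilon-(n+1)\kappa^{-m}\sigma=O(\sigma^2)$. This is exactly how the Puiseux coefficients in \eqref{asym of cri_point} were determined. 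The $O(\sigma^2)$ coefficient equals $N C_0e^{\mathbf i\theta}\kappa^{-1}$ plus a fixed quadratic term. By the definition of $n_0$ and the expansion \eqref{asym of cri_point}, that quadratic term is $N\cdot\frac{n_0}{N}\kappa^{-2m}\kappa^{\,\cdot}$ times the appropriate power of $\kappa$; it is the quantity whose cancellation would force the true root's third coefficient $-\frac{n_0}{N}\kappa^{1-2m}$. Hence
\[
G=\kappa^{N-1}N\sigma^2\bigl(C_0e^{\mathbf i\theta}+\tfrac{n_0}{N}\kappa^{1-2m}\bigr)+R,
\]
with $|R|\le K|\sigma|^3$ for a constant $K$ depending on $n,m,\delta'$. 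The bound on $R$ holds because $|\kappa|$ is bounded above and below on the annulus $|q_1|\in[\delta'/3,\delta'/2]$, and $|\epsilon|$ is small.

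It then remains to estimate the bracket. Since $|\kappa|\ge(\delta'/3)^{1/N}$ and $2m-1>0$, we have $|\frac{n_0}{N}\kappa^{1-2m}|\le\frac{n_0}{N}(3/\delta')^{(2m-1)/N}=C_0/2$. Therefore $|C_0e^{\mathbf i\theta}+\frac{n_0}{N}\kappa^{1-2m}|\ge C_0/2$ for every $\theta$, and
\[
|G|\ge N|\kappa|^{N-1}\tfrac{C_0}{2}|\sigma|^2-K|\sigma|^3>0
\]
once $|\sigma|<N|\kappa|^{N-1}C_0/(2K)$. This threshold translates into the choice of $\delta_0$, which we also take to be less than $\delta'/3$. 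One must also check that the value $0$ of the excluded factors cannot occur, i.e.\ $y\ne0$, which is clear since $w\approx\kappa\neq0$.

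The main obstacle is the second step: identifying precisely the $\sigma^2$-coefficient of $G$ as $N\kappa^{N-1}\cdot\frac{n_0}{N}\kappa^{1-2m}$ (the expression consistent with \eqref{asym of cri_point}), and obtaining a remainder bound that is uniform in $q_1$ and $\theta$. I would first compute the coefficient symbolically via the second-order expansion of $\log G$, and verify it against the stated value of $n_0$. For the remainder, I would use Cauchy estimates for $G$ as an analytic function of $(\epsilon,\sigma)$ on a fixed polydisc, which gives the uniform constant $K$.
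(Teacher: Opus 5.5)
Your proposal is correct and follows essentially the same route as the paper. The paper likewise writes $F(z_{0,\kappa}+C_0e^{\mathbf i\theta}\sigma^3)=\sigma^2\bigl((N\kappa^{2m-1}C_0e^{\mathbf i\theta}+n_0)\kappa^{N-2m}+\sigma Q\bigr)$, where $Q$ is a polynomial in $\kappa^{\pm1},\sigma,e^{\mathbf i\theta}$ and hence uniformly bounded, and it uses the choice of $C_0$ to bound the bracket below by $(\delta'/3)^{(N-2m)/N}n_0$; this is your $C_0/2$ estimate once you factor out $N\kappa^{N-1}$. Your $\sigma^2$-coefficient is right: a direct second-order expansion gives $n_0\kappa^{N-2m}$ for its $\theta$-independent part. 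The substitution $y=\sigma w$ is only a cleaner way to track the powers of $\sigma$; note that $q_2=\sigma^{m+1}$, not $\sigma^{n+1}$, in your first displayed line.
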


\begin{proof} Denote by $Q(\kappa, \sigma, e^{\mathbf{i}\theta})$ the polynomial in $\mathbb{C}[\kappa, \kappa^{-1}, \sigma, e^{\mathbf{i}\theta}]$ such that 
   \begin{align*}
       F(z_{0, \kappa}+ C_0e^{\mathbf{i} \theta}\sigma^{3})&=\sigma^{2}\left((N\kappa^{2m-1} C_0e^{\mathbf{i} \theta}+ n_0)\kappa^{N-2m} +\sigma Q(\kappa, \sigma, e^{\mathbf{i}\theta})\right).
   \end{align*}

Assume $F(z_{0, \kappa}+ C_0e^{\mathbf{i} \theta}\sigma^{3})=0$. Then it follows from   $q_2 \neq 0$ that
$$(N\kappa^{2m-1} C_0e^{\mathbf{i} \theta}+ n_0)\kappa^{N-2m} +\sigma Q(\kappa, \sigma, e^{\mathbf{i}\theta})=0.   $$
Since $ (\frac{\delta'}{3})^{\frac{1}{N}} \leq |\kappa|\leq  (\frac{\delta'}{2})^{\frac{1}{N}}$ by the hypothesis and $|e^{\mathbf{i}\theta}|=1$, it follows that the polynomial $ Q(\kappa, \sigma, e^{\mathbf{i}\theta})$ is  uniformly bounded, independent of $q_1$. Hence, 
 there exists  $\delta_0<\frac{\delta'}{3}$ such that for any  $|q_2| \in (0,\delta_0)$, we have 
$$|\sigma Q(\kappa, \sigma, e^{\mathbf{i}\theta})|<\frac{1}{2} (\frac{\delta'}{3})^{\frac{N-2m}{N}}n_0 .$$ 
This 
results in  a contradiction using
\begin{align*}
    \big|\big(N \kappa^{2m-1}C_0 e^{\mathbf{i} \theta}+n_0 \big)\kappa^{N-2m}\big|\geq & |\kappa|^{N-2m} (N|\kappa|^{2m-1} C_0-n_0)\geq (\frac{\delta'}{3})^{\frac{N-2m}{N}}n_0.
\end{align*}  
\end{proof}

\begin{lemma} \label{two_term}
With the notation as well as the range of $(q_1, q_2)$ in 
  \cref{not root}, we 
fix branches of $\kappa=q_1^{1/N}$ and $\sigma=q_2^{1/(m+1)}$.
 Let
$x_\kappa(q_1,q_2)$ be the continuous root branch of $F(x)$
near $z_{0,\kappa}(q_1,q_2)$. Then
\[
|x_\kappa(q_1,q_2)-z_{0,\kappa}(q_1,q_2)|
<
C_0|\sigma^3|.
\]
\end{lemma}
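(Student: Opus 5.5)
The idea is a continuity (winding/connectedness) argument built on \cref{not root}: the root branch $x_\kappa$ starts inside the disc of radius $C_0|\sigma|^3$ around $z_{0,\kappa}$ when $q_2$ is tiny, and it can never cross the boundary circle, because no point of that circle is a root of $F$.

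First I will make the branch $x_\kappa$ precise and show it lies inside the disc for $|q_2|$ very small. By the Puiseux expansion \eqref{asym of cri_point} there is a root of $F$ of the form
\[
z_{0,\kappa}-\tfrac{n_0}{N}\kappa^{1-2m}\sigma^3+O(\sigma^4).
\]
Since $|\kappa|^{1-2m}\le(\tfrac{3}{\delta'})^{\frac{2m-1}{N}}$, we have $\tfrac{n_0}{N}|\kappa|^{1-2m}\le C_0/2$. Hence, for $|\sigma|$ small enough (uniformly in $\kappa$ on the annulus), this root satisfies $|x_\kappa-z_{0,\kappa}|<C_0|\sigma|^3$. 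To get uniformity in $\kappa$, rather than quoting Puiseux pointwise, I would rescale $y=\kappa\sigma(1+w)$. Then $F=0$ becomes an analytic equation in $(w,\sigma)$ with coefficients depending analytically on $\kappa$ over a compact set. At $\sigma=0$ it has the simple root $w=0$, coming from the $N$-th root $\kappa$ of $q_1$. The implicit function theorem then yields an analytic branch whose Taylor coefficients are bounded uniformly in $\kappa$. The first terms reproduce $a_1,a_2$ and the $\sigma^3$ term $-\tfrac{n_0}{N}\kappa^{1-2m}$, which matches the computation of the leading coefficient of $Q$ in \cref{not root}. This defines $x_\kappa$ as a continuous function of $(q_1,q_2)$ (with the chosen branches of $\kappa,\sigma$), and shows the desired bound holds on $0<|q_2|<\epsilon$ for some $\epsilon\le\delta_0$.

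Next I extend the bound to the whole range $0<|q_2|<\delta_0$ by connectedness. The parameter region, with the branches of $\kappa,\sigma$ fixed on a simply connected domain, is path connected. Along any path from a point with $|q_2|<\epsilon$ to an arbitrary point of the region, consider the continuous function $|x_\kappa-z_{0,\kappa}|-C_0|\sigma|^3$. It is negative at the start. If it ever vanished, then $x_\kappa=z_{0,\kappa}+C_0e^{\mathbf i\theta}\sigma^3$ for some $\theta$, and this would be a root of $F$, contradicting \cref{not root}. So the function stays negative along the whole path, which proves the lemma.

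The main obstacle is the continuity and uniformity of the branch. Roots of $F$ may collide as parameters vary, so "the continuous branch" must be understood via the analytic continuation above. I would argue that the disc of radius $C_0|\sigma|^3$ contains exactly one root of $F$ (counted with multiplicity). By Rouché/the argument principle, this count is constant along paths, since no root ever lies on the boundary circle by \cref{not root}. The count equals $1$ near $q_2=0$, because the other roots near $\kappa\sigma$ are $\mu\kappa\sigma+O(\sigma^2)$ with $\mu\ne1$, hence far away. This makes $x_\kappa$ a well-defined continuous, indeed holomorphic, simple root throughout the range.
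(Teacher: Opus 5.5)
Your proposal is correct and follows essentially the paper's argument: start in the Puiseux regime, where $|x_\kappa-z_{0,\kappa}|\le \tfrac{C_0}{2}|\sigma|^3+O(|\sigma|^4)$, and use \cref{not root} to show that along a continuous deformation the difference can never reach the circle of radius $C_0|\sigma|^3$. The paper does this by the intermediate value theorem along a radial segment in $q_2$ with $q_1$ fixed, so it needs only the pointwise expansion at fixed $q_1$; your uniform rescaling $y=\kappa\sigma(1+w)$ (under which $F/\kappa^N$ depends only on $w$ and $\sigma\kappa^{-m}$) and your argument-principle count go further than needed, but they usefully make rigorous the well-definedness and continuity of the branch $x_\kappa$, which the paper takes for granted.
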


\begin{proof} 

Assume that there exists $(\bar q_1,\bar q_2)$ in the same range as $(q_1, q_2)$ (i.e. $|\bar q_2|\in(0,\delta_0)$ and
$|\bar q_1|\in[\frac{\delta'}{3},\frac{\delta'}{2}]$) 
such that
\[
|x_\kappa(\bar q_1,\bar q_2)-z_{0,\kappa}(\bar q_1,\bar q_2)|
\ge C_0|\bar q_2|^{\frac{3}{m+1}}.
\]
For fixed $\bar q_1$, by \cref{asym of cri_point}, there exists
$\delta_2\in(0,|\bar q_2|)$ such that for every $q'_2$ with
$0<|q'_2|<\delta_2$,
\[
|x_\kappa(\bar q_1,q'_2)-z_{0,\kappa}(\bar q_1,q'_2)|
<
C_0|q'_2|^{\frac{3}{m+1}}.
\]
Take one such  $q'_2$ with  $\arg q_2'=\arg \bar q_2$.
For $r\in[|q'_2|,|\bar q_2|]$, we define  
\[
\tilde g(r)
=
|x_\kappa(\bar q_1,q_2(r))-z_{0,\kappa}(\bar q_1,q_2(r))|
-
C_0|q_2(r)|^{\frac{3}{m+1}},
\]
where 
 $
q_2(r):=r e^{\mathbf i\arg \bar q_2}$. 
Since the root branch $x_\kappa$ is continuous in $q_2$, the function
$\tilde g$ is continuous on $[|q'_2|,|\bar q_2|]$. Moreover,
$ 
\tilde g(|q'_2|)<0, \tilde g(|\bar q_2|)\ge 0.
$ 
Hence there is $\tilde r_0\in[|q'_2|,|\bar q_2|]$ such that $\tilde g(\tilde r_0)=0$.
Let $q_{20}=q_2(\tilde r_0)$. Then
\[
|x_\kappa(\bar q_1,q_{20})-z_{0,\kappa}(\bar q_1,q_{20})|
=
C_0|q_{20}|^{\frac{3}{m+1}},
\]
which
contradicts Lemma \ref{not root}.
\end{proof}

\subsubsection*{Proof of \cref{eigenv}  }

 Note that the critical values are given by 
 $e_l=E(y_{l,\kappa})$
 with  $y_{l,\kappa}$ being the roots of  $F(y)$. The analytic expansion of $y_{l,\kappa}$ with respect to   $\sigma$ is  of the form $$y_{l,\kappa}=\sum\nolimits_{i \geq 1}a_{i}(\kappa) \sigma^{i}  =\sigma\left(P(\kappa,\sigma)+R(\kappa,\sigma)\right), $$
 where $P(\kappa,\sigma)=a_1(\kappa)+a_2(\kappa)\sigma$ and
$R(\kappa,\sigma):=\sum_{i \geq 3}a_{i}\sigma^{i-1}$. Indeed,  
\begin{align*}
    &E(y_{l,\kappa}) \\
    =& n\Big( P(\kappa,\sigma)^{m+1}+\sum_{j=1}^{m+1} \binom{m+1}{j}P^{m+1-j} R^{j} \Big) +(m-n)\sigma \left(P(\kappa,\sigma)+R(\kappa,\sigma)\right) \\
    =& n\kappa^{m+1}+(m+1)\kappa \cdot \sigma 
+\sigma^2Q_3(\kappa,\sigma)+ Q_2(\kappa,\sigma,R(\kappa,\sigma) ),
\end{align*}
where $ Q_2(\kappa,\sigma,R(\kappa,\sigma)):=n\cdot \sum_{j=1}^{m+1}\binom{m+1}{j}P^{m+1-j} R^{j}  +(m-n)\sigma \cdot R$ and $$Q_3(\kappa,\sigma)=n\sum_{i=2}^{m+1} \binom{m+1}{i} a_{1}(\kappa)^{m+1-i} a_2(\kappa)^{i} \sigma^{i-2}+(m-n)a_2(\kappa) .$$
Recall $z_{0,\kappa}(q_1,q_2)=\sigma\cdot P(\kappa,\sigma)$ is given in  \cref{z_0}.
By   \cref{two_term}, $|y_{l,\kappa}-z_{0,\kappa}(q_1,q_2)|=|\sigma R(\kappa,\sigma)| < C_0 |\sigma^3|$ for any  $q_1,q_2$ satisfying $|q_2| \in (0,\delta_0)$ and $|q_1|\in [\frac{\delta'}{3},\frac{\delta'}{2}]$.  Since $ (\frac{\delta'}{3})^{\frac{1}{N}} \leq |\kappa|\leq  (\frac{\delta'}{2})^{\frac{1}{N}} $,   the coefficients  of $\sigma^{i}$ in $Q_2(\kappa,\sigma,R(\kappa,\sigma))$ and $Q_3(\kappa,\sigma)$  are uniformly bounded.
Thus there exist  $\delta_2\in (0, \delta_0)$ and  constants $C_1,C_2>0$ (depending only on $\delta',\delta_0,m,n$) such that   $$| Q_2(\kappa,\sigma,R(\kappa,\sigma))|< C_1 |R(\kappa,\sigma)|<C_0 C_1 |\sigma|^2 \qquad \mbox{and} \quad |Q_3(\kappa,\sigma)| < C_2. $$
We conclude
  \begin{align*}
      &|E(y_{l,\kappa})-n\kappa^{m+1}-(m+1)\kappa \cdot \sigma|< (C_0C_1+C_2) |\sigma|^2.
  \end{align*}
 Take $\kappa=q_1^{\frac{1}{(m+1)n}}\mu_l$, $C=C_0C_1+C_2$. Then  the statement follows.


\begin{thebibliography}{99}


\bibitem[AF03]{AF} M. J. Ablowitz and A. S. Fokas. Complex Variables: Introduction and Applications,
2nd ed., Cambridge Texts Appl. Math., Cambridge Univ. Press, Cambridge,
2003. 



 

\bibitem[BCFKS00]{BCFKS} V. Batyrev, I. Ciocan-Fontanine, B. Kim and  D. van Straten. \,{\it Mirror symmetry and toric degenerations of partial flag manifolds}. Acta Math. 184, no. 1, 1--39, 2000.

  


 
\bibitem[CK99]{CoKa}  D. A. Cox and  S. Katz.
{\it Mirror symmetry and algebraic geometry}. 
Mathematical Surveys and Monographs, 68. American Mathematical Society, Providence, RI, 1999. 



 

\bibitem[CDG24]{CDG}  G. Cotti,  B.  Dubrovin  and D.  Guzzetti.
{\it Helix structures in quantum cohomology of Fano varieties}.
Lecture Notes in Mathematics, 2356. Springer, 2024. 



 
 

 

\bibitem[Chow25]{Chow}C. H. Chow. {\it Gamma conjecture I for flag varieties}, preprint at arXiv: math.AG/2501.13221. 


\bibitem[CG07]{CoGi}  T. Coates and A. Givental.
{\it Quantum Riemann-Roch, Lefschetz and Serre}. 
Ann. of Math. (2) 165 (2007), no. 1, 15–53.


\bibitem[Du98]{Dub}  B. Dubrovin.
{\it Geometry and analytic theory of Frobenius manifolds.} 
Proceedings of the International Congress of Mathematicians, Vol. II (Berlin, 1998).
Doc. Math. 1998, Extra Vol. II, 315--326.



\bibitem[Dub99]{Dub99} B. Dubrovin. \emph{Painlev\'e transcendents in two-dimensional topological field theory}, In: The Painlev\'e
Property, CRM Ser. Math. Phys., Springer, (New York 1999), pp. 287--412.

 

\bibitem[FZ19]{FaZh} B. Fang and  P. Zhou.  {\it Gamma II for toric varieties from integrals on T-dual branes and homological mirror symmetry}, preprint at arXiv:1903.05300.

 

\bibitem[GGI16]{GGI}  S. Galkin,  V.  Golyshev and  H. Iritani.
{\it Gamma classes and quantum cohomology of Fano manifolds: gamma conjectures}.
Duke Math. J. 165 (2016), no. 11, 2005--2077.

\bibitem[GI19]{GaIr}  S. Galkin and H. Iritani. {\it Gamma conjecture via mirror symmetry}. Primitive forms and related subjects—Kavli IPMU 2014, 55--115, Adv. Stud. Pure Math., 83, Math. Soc. Japan, Tokyo, 2019.

\bibitem[GHIKLS24]{GHIKLS} S. Galkin, J. Hu, H. Iritani, H.-Z. Ke, C. Li and Z. Su. {\it Revisiting Gamma conjecture I: counterexamples and modifications}, to appear in Compositio Math.; preprint at arXiv: 2405.16979.

 


\bibitem[Gi96]{Giv}  A. Givental. {\em
Equivariant Gromov-Witten invariants}.
Internat. Math. Res. Notices 1996, no. 13, 613--663.

\bibitem[Giv97]{Giv97} A. Givental. 
{\it Stationary phase integrals, quantum Toda lattices, flag manifolds and the mirror conjecture}. Topics in singularity theory, 103–115,
Amer. Math. Soc. Transl. Ser. 2, 180, Adv. Math. Sci., 34, Amer. Math. Soc., Providence, RI, 1997.

\bibitem[Giv98]{Giv98}  A.  Givental.
{\it A mirror theorem for toric complete intersections}.  Topological field theory, primitive forms and related topics (Kyoto, 1996), 141–175,
Progr. Math., 160, Birkhäuser Boston, Boston, MA, 1998.


 \bibitem[GZ16]{GZ16}V. Golyshev and D. Zagier. \emph{\it Proof of the gamma conjecture for Fano 3-folds with a Picard lattice of rank one},   Izv. Math. 80 (2016), no. 1, 24--49.


 
 

\bibitem[GS25]{GS25} A. Gyenge and S. Szab\'o. {\it Blow-ups and the quantum spectrum of surfaces}, Adv. Math. 479(2025), Part A, Paper No. 110432, 39pp.




\bibitem[HKLS25]{HKLS}  J. Hu, H.-Z. Ke, C. Li and L. Song. {\em Mirror symmetry for certain blowups of Grassmannians}, to appear in Trans. Amer. Math. Soc. ; arXiv:  {math.AG/2502.13545}.

\bibitem[HKLSu26]{HKLS2} J. Hu,  H.-Z. Ke, C. Li and Z. Su. {\it Gamma conjecture II via global Gamma-I}, preprint at arXiv: math.AG/2606.07418.
 
 

\bibitem[HK23]{HuKe} X. Hu and H.-Z. Ke.
{\it Gamma conjecture II for quadrics}. 
Math. Ann. 387 (2023), no. 1-2, 927--983.

\bibitem[HKLY21]{HKLY}
J. Hu, H.-Z. Ke, C. Li and T. Yang.
\emph{Gamma conjecture I for del Pezzo surfaces}.
Adv. Math. 386 (2021), 107797.


\bibitem[HS07]{HeSe}
 C. Hertling and  C. Sevenheck.
{\it Nilpotent orbits of a generalization of Hodge structures}.
J. Reine Angew. Math. 609 (2007), 23–80.

\bibitem[Iri07]{Iri07} H. Iritani. \emph{Convergence of quantum cohomology by quantum Lefschetz}, J. Reine Angew. Math. 610 (2007), 29--69.

\bibitem[Ir09]{Iri}  H. Iritani.
{\it An integral structure in quantum cohomology and mirror symmetry for toric orbifolds}. 
Adv. Math. 222 (2009), no. 3, 1016--1079.

\bibitem[Ir11]{Iri11} H. Iritani.
{\it Quantum cohomology and periods}. 
Ann. Inst. Fourier (Grenoble) 61 (2011), no. 7, 2909–2958.


\bibitem[Ir20]{Iri20} H. Iritani.
{\it Global mirrors and discrepant transformations for toric Deligne-Mumford stacks}. 
SIGMA Symmetry Integrability Geom. Methods Appl. 16 (2020), Paper No. 032, 111 pp.

\bibitem[IK23]{IrKo} H. Iritani and Y. Koto. {\it  Quantum cohomology of projective bundles}, to appear in Geom. Topol.; preprint at arXiv:math.AG/2307.03696v4. 
 

\bibitem[Iri23]{Iri23}  H. Iritani. {\it Quantum cohomology of blowups}, to appear in Kyoto J. Math.; preprint at arXiv: math.AG/2307.13555.

\bibitem[Iri23b]{Iri23icm}
H.~Iritani.
\emph{Gamma classes and quantum cohomology},
ICM -- International Congress of Mathematicians. Vol. IV. Sections 5-8, 2552--2574.
EMS Press, Berlin, 2023.

\bibitem[Ke24]{Ke24}H.-Z. Ke.\,{\it Conjecture $\mathcal{O}$ for projective complete intersections}, Int. Math. Res. Not. IMRN 2024, no. 5, 3947-3974.


\bibitem[KLS25]{KLS} H.-Z. Ke, C. Li and J. Song. {\it Towards small quantum Chern character}, preprint at arXiv: math.AG/2507.12333. 

\bibitem[Kont94]{Kont}  M.  Kontsevich. {\it Homological algebra of mirror symmetry}. Proceedings of the International Congress of Mathematicians, Vol. 1, 2 (Zürich, 1994), 120–139, Birkhäuser, Basel, 1995. 

\bibitem[KKP08]{KKP}  L. Katzarkov,  M. Kontsevich and  T. Pantev.
{\it Hodge theoretic aspects of mirror symmetry}.  From Hodge theory to integrability and TQFT tt*-geometry, 87–174,
Proc. Sympos. Pure Math., 78, Amer. Math. Soc., Providence, RI, 2008.

\bibitem[LRY26]{LRY} C. Li,  K. Rietsch and M. Yang. {\em A Peterson program for general    Schubert varieties and mirror symmetry},  preprint at arXiv: math.AG/2609.30242.

 

 


\bibitem[Or92]{Orl}  D. O. Orlov.
{\it Projective bundles, monoidal transformations, and derived categories of coherent sheaves}. (Russian summary)
Izv. Ross. Akad. Nauk Ser. Mat. 56 (1992), no. 4, 852--862; translation in
Russian Acad. Sci. Izv. Math. 41 (1993), no. 1, 133--141.

 

\bibitem[SS20]{SS20} F. Sanda and Y. Shamoto. \emph{An analogue of Dubrovin’s conjecture}, Ann. Inst. Fourier, 70 (2020) no.2, 621–682.
 


\bibitem[ST97]{SiTi}  B. Siebert and  G.  Tian.
{\it On quantum cohomology rings of Fano manifolds and a formula of Vafa and Intriligator}.
Asian J. Math. 1 (1997), no. 4, 679--695.

\bibitem[Yan22]{Yan22} Z. Yang.\,{\it Gamma conjecture I for blowing up $\mathbb{P}^n$ along $\mathbb{P}^r$},  preprint at arXiv: math.AG/2202.04234.



\end{thebibliography}
\end{document}